\numberwithin{equation}{section}
\newtheorem{theorem}{Theorem}[section]
\newtheorem{proposition}[theorem]{Proposition}
\newtheorem{lemma}[theorem]{Lemma}
\newtheorem{corollary}[theorem]{Corollary}
\newtheorem{example}[theorem]{Example}
\theoremstyle{definition}
\newtheorem{definition}[theorem]{Definition}
\newtheorem{notation}[theorem]{Notation}
\newtheorem{remark}[theorem]{Remark}
\newcommand{\vol}{\operatorname{Vol}}
\newcommand{\Dc}{\mathcal{D}}
\newcommand{\Sc}{\mathcal{S}}
\newcommand{\Ec}{\mathcal{E}}
\newcommand{\Bc}{\mathcal{B}}
\newcommand{\Fc}{\mathcal{F}}
\newcommand{\meas}{\operatorname{meas}}
\newcommand {\E} {\mathbb{E}}
\newcommand {\M} {\mathcal{M}}
\newcommand {\R} {\mathbb{R}}
\newcommand {\Z} {\mathbb{Z}}
\newcommand {\Ac} {\mathcal{A}}
\newcommand {\Uc} {\mathcal{U}}
\newcommand {\Tb} {\mathbb{T}}
\newcommand {\Vc} {\mathcal{V}}
\author[I. Wigman]{Igor Wigman}
\address{IW: Department of Mathematics \\ King's College London \\ Strand \\ London WC2R 2LS \\ England, UK}
\email{{\tt igor.wigman@kcl.ac.uk}}
\author[N. Yesha]{Nadav Yesha}
\address{NY: Department of Mathematics \\ King's College London \\ Strand \\ London WC2R 2LS \\ England, UK}
\curraddr{Department of Mathematics, University of Haifa, 3498838 Haifa, Israel}
\email{{\tt nyesha@univ.haifa.ac.il}}
\begin{document}
	
\title[CLT for mass distribution of toral Laplace eigenfunctions]{CLT for Planck scale mass distribution of toral Laplace eigenfunctions}

\begin{abstract}
We study the fine scale $L^{2}$-mass distribution of toral Laplace eigenfunctions with respect to random position, in $2$ and $3$ dimensions.
In $2$d, under certain flatness assumptions on the Fourier coefficients and generic restrictions on energy levels,
both the asymptotic shape of the variance is determined and the limiting Gaussian law is established, in the optimal Planck-scale regime.
In $3$d the asymptotic behaviour of the variance is analysed in a more restrictive scenario (``Bourgain's eigenfunctions").
Other than the said precise results, lower and upper bounds are proved for the variance, under more general flatness assumptions
on the Fourier coefficients.
\end{abstract}

\maketitle

\section{Introduction}

Given a smooth compact $d$-manifold $\M$ we are interested in the spectral properties of the Laplace-Beltrami operator $\Delta$ on $\M$.
It is well-known that the eigenvalue spectrum of $\Delta$ is purely discrete, i.e.,
the set of numbers $E$ admitting a solution to the Helmholtz equation
\begin{equation*}
\Delta \phi + E\phi = 0
\end{equation*}
is a sequence $\{E_{j}\}_{j\ge 1}$ of numbers ordered with multiplicity in a non-decreasing
order such that $ E_j \to \infty $. We denote the corresponding sequence $\{\phi_{j} \}_{j\ge 1}$ of (real-valued) eigenfunctions constituting an orthonormal basis
of the square-integrable functions $L^{2}(\M)$ on $\M$; the sequence $\{\phi_{j} \}_{j\ge 1}$ is uniquely determined up to
the spectral degeneracies (i.e., up to orthogonal transformations in each eigenspace of dimension $\ge 2$).

\subsection{Shnirelman's Theorem and Small-Scale Equidistribution}

Assuming w.l.o.g. that $\M$ is unit volume $\vol(\M)=1$, the celebrated Shnirelman's
Theorem ~\cite{Sn,Ze,CdV} asserts that if $\M$ is chaotic (i.e., the geodesic flow on $ \mathcal{M} $ is ergodic), then ``most" of the
$\{\phi_{j}\}$ are $L^{2}$-equidistributed. In particular, they are equidistributed in position space, i.e., there exists a density $1$ sequence $j_{k}$ such that for all
``nice" domains $\Ac\subseteq\M$ we have
\begin{equation}
\label{eq:L2 mass phi->A/M}
\lim\limits_{k\rightarrow\infty}\int\limits_{\Ac}\phi_{j_{k}}(x)^{2}dx= \vol(\Ac).
\end{equation}
Beyond Shnirelman's Theorem, Berry's universality conjecture ~\cite{Berry1,Berry2} implies that for a {\em generic} chaotic
manifold \eqref{eq:L2 mass phi->A/M} holds for $\Ac$ shrinking with $k$, slower than the Planck's scale $E_{j_{k}}^{-1/2}$.
More precisely, it states that there exists a density $1$ sequence $\{j_{k}\}_{k}$ so that if $r_{0}(E):\R_{> 0}\rightarrow\R_{> 0}$
satisfies $r_{0}(E)\cdot E^{1/2}\rightarrow\infty$ diverging arbitrarily slowly,
then, for $B_{x}(r)$ the radius $r$ geodesic ball in $\M$ centred at $x$, we have
\begin{equation}
\label{eq:|mass-exp|=o(r^d)}
\left|\int\limits_{B_{x}(r)}\phi_{j_{k}}(y)^{2}dy - \vol(B_{x}(r)) \right|
= o_{k\rightarrow\infty} (r^{d})
\end{equation}
uniformly for all $x\in\M$ and $r>r_{0}(E_{j_{k}})$, i.e.,
\begin{equation}
\label{eq:L2mass shrinking uniform}
\sup\limits_{\substack{x\in\M \\ r>r_{0}(E_{j_{k}})}}
\left|\frac{\int\limits_{B_{x}(r)}\phi_{j_{k}}(y)^{2}dy}{\vol(B_{x}(r))} - 1\right| \rightarrow 0.
\end{equation}


The following recent results are rigorous manifestations of the small-scale (``shrinking balls") statement \eqref{eq:L2mass shrinking uniform}.
Luo and Sarnak ~\cite[Theorem 1.2]{Luo-Sarnak} established the small-scale equidistribution
for Laplace eigenfunctions on the modular surface (assuming in addition that they are Hecke eigenfunctions)
where $r>E^{-\alpha}$ with a small $ \alpha>0 $, and Young \cite{Young}, conditionally on GRH, refined this estimate for $r>E^{-1/6+o(1)}$ holding for \emph{all} such eigenfunctions. Hezari and Rivi\`{e}re ~\cite{Hezari-Riviere1}, and independently Han ~\cite{Han1} established the equidistribution
for Laplace eigenfunctions on manifolds of negative curvature on logarithmic scale (i.e., $r>(\log{E})^{-\alpha}$, for some
$\alpha>0$), and Han ~\cite{Han2} considered random Laplace eigenfunctions on ``symmetric" manifolds, of high spectral degeneracy; here the higher the spectral degeneracy is the smaller the allowed scale is. More recently, Han and Tacy \cite{HanTacy} proved small-scale equidistribution for random Gaussian combinations of eigenfunctions on compact manifolds for $ r>E^{-1/2+o(1)} $, and de Courcy-Ireland \cite{DeCourcyIreland} showed that, with high probability, the $L^{2}$-mass of random Gaussian spherical harmonics is, up to a small error,
equidistributed, slightly above Planck scale.

\subsection{Toral Laplace eigenfunctions}

For the $d$-dimensional torus $\Tb^{d}=\R^{d}/\Z^{d}$, $d\ge 2$, there are high spectral degeneracies;
in this case Lester and Rudnick ~\cite[Theorem 1.1]{LeRu} proved that the small-scale equidistribution is satisfied by a generic Laplace eigenfunction (also considered by Hezari and Rivi\`{e}re ~\cite{Hezari-Riviere2}).
More precisely, they showed that every o.n.b. $\{\phi_{j}\}$ admits a density one subsequence $\{\phi_{j_{k}}\}$
of Laplace eigenfunctions obeying \eqref{eq:L2mass shrinking uniform}, with $r_{0}(E)=E^{-\alpha(d)}$, where $\alpha(d)$ is any number
smaller than
\begin{equation}
\label{eq:alpha(d) LeRu}
\alpha(d)<\frac{1}{2(d-1)},
\end{equation}
an (almost) optimal Planck-scale result for $d=2$, yet somewhat weaker than Berry's conjecture for $d> 2$.

One can express the real toral Laplace eigenfunctions explicitly as a sum of exponentials
\begin{equation}
\label{eq:fn sum exp}
f_{n}\left(x\right)=\sum_{\lambda\in\mathcal{E}_{n}}c_{\lambda}e\left(\left\langle x,\lambda\right\rangle \right), \hspace{10pt} (c_{-\lambda}=\overline{c_\lambda})
\end{equation}
for
\begin{equation}
\label{eq:S_d}
n\in S_{d}:=\{n=a_{1}^{2}+\ldots +a_{d}^{2}:\: a_{1},\ldots,a_{d}\in\Z \}
\end{equation}
expressible as a sum of $d$ integer squares,
and the corresponding frequencies $\lambda$ are the standard lattice points
\begin{equation}
\label{eq:E_n}
\Ec_{n} = \Ec_{d;n}=\{\lambda\in\Z^{d}:\: \|\lambda\|^{2}=n\}
\end{equation}
lying on the $(d-1)$-dimensional sphere (a circle for $ d=2 $) of radius-$\sqrt{n}$; in this case the energy is $E=E_{n}=4\pi^{2}n$.
We will assume w.l.o.g. that $f_{n}$ is $L^{2}$-normalised, equivalent to
\begin{equation}
\label{eq:BasicNormalization}
\|f_{n}\|_{L^{2}(\Tb^{d})}^{2} = \sum_{\lambda\in\mathcal{E}_{n}}\left|c_{\lambda}\right|^{2}=1.
\end{equation}

For every $ n\in S_d $, denote
\begin{equation}
\label{eq:N}
 N=N_{d;n}=\#\mathcal{E}_n.
 \end{equation}

When $d=2$, by Landau's theorem, $
\left\{ n\le x:\,n\in S_{2}\right\} \sim K\frac{x}{\sqrt{\log x}}
$
where $ K>0 $
is the ``Landau-Ramanujan constant". On average $N=N_{2;n}$ is of order of magnitude
$\sqrt{\log n}$; however, for a density one sequence
in $S_{2}$ we have
$
N=\left(\log n\right)^{\log2/2+o\left(1\right)}.
$
In general, for $n\in S_{2}$ we have \[N=n^{o\left(1\right)}.\]

For $d=3$, Siegel's theorem asserts that for $n\not\equiv0,4,7\,\left(8\right)$,
\[
N=N_{3;n}=n^{1/2+o\left(1\right)};
\]
since $x\mapsto2^{a}x$ is a bijection between the solutions to $x_{1}^{2}+x_{2}^{2}+x_{3}^{2}=n$
and $x_{1}^{2}+x_{2}^{2}+x_{3}^{2}=4^{a}n$, we can always assume
that $n\not\equiv0,4,7\,\left(8\right)$ with no loss of generality.

Granville and Wigman ~\cite[Theorem 1.2]{GranvilleWigman}
refined the aforementioned estimate by Lester-Rudnick for $d=2$. They proved that in this case, \eqref{eq:L2mass shrinking uniform}
is valid slightly above Planck-scale $r_{0}(E)=E^{-1/2+o(1)}$, for {\em all} eigenfunctions $f_{n}$ as in \eqref{eq:fn sum exp},
corresponding to numbers $n$ so that the lattice points $\Ec_{n}$ are well-separated (``Bourgain-Rudnick sequences"),
a condition satisfied ~\cite[Lemma 5]{Bourgain-Rudnick}
by ``generic" integers $n\in S_{2}$ in a strong quantitative sense, subsequently refined in ~\cite[Theorem 1.4]{GranvilleWigman},
see section \ref{sec:quasi corr}.

\subsection{Averaging mass w.r.t. ball centre}

For both the $2$-dimensional and the higher-dimensional tori it is possible to construct exceptional examples of sequences of
toral eigenfunctions where the equidistribution condition is not satisfied:
for $d\ge 2$ thin sequences ~\cite[Theorem $3.1$]{LeRu} $\{\phi_{j_{k}}\}$  of eigenfunctions violating condition \eqref{eq:|mass-exp|=o(r^d)} at Planck-scale
$r \cdot E_{j_{k}}^{1/2} \rightarrow \infty$, around the origin $x=0$, and even stronger, for $d\ge 3$
~\cite[Theorem $4.1$ (construction by J. Bourgain)]{LeRu} eigenfunctions violating \eqref{eq:|mass-exp|=o(r^d)} with $r \gg E^{-\alpha(d)}$ where $\alpha(d)> \frac{1}{2(d-1)}$,
again around the origin $x=0$.
In these cases, rather than keeping the ball centre $x=0$ at the origin, one may vary $x$,
and study whether the ``typical" discrepancy on the l.h.s. of \eqref{eq:|mass-exp|=o(r^d)} is {\em small}, even if the
existence of $x$ so that the l.h.s. of \eqref{eq:|mass-exp|=o(r^d)} is {\em not small} is known, so that, in particular,
\eqref{eq:L2mass shrinking uniform} is not satisfied.

A natural way to vary $x$ is to think of $x$ as {\em random}, drawn uniformly in $\Tb^{d}$.
We define the random variable
\begin{equation}
\label{eq:X_RV}
X_{f_{n},r}=X_{f_{n},r;x}:= \int\limits_{B_{x}(r)}f_{n}(y)^{2}dy,
\end{equation}
and are interested in the distribution of $X_{f_{n},r}$ where $x$ is drawn randomly uniformly in $\Tb^{d}$.
The relevant moments are: expectation
\begin{equation}
\label{eq:Expectation}
\E[X_{f_{n},r}] = \int\limits_{\Tb^{d}}X_{f_{n},r;x}dx,
\end{equation}
higher centred moments
\begin{equation}
\label{eq:centred moments}
\E[(X_{f_{n},r}-\E[X_{f_{n},r}])^{k}] = \int\limits_{\Tb^{d}}\left(X_{f_{n},r;x}-\E[X_{f_{n},r}]\right)^{k}dx, \hspace{1em}k\ge2,
\end{equation}
and in particular the variance
\begin{equation}
\label{eq:Variance}
\Vc(X_{f_{n},r}) = \E[(X_{f_{n},r}-\E[X_{f_{n},r}])^{2}].
\end{equation}

This approach of averaging the $L^{2}$-mass with respect to the ball centre (and keeping $f_{n}$ fixed) was pursued by Granville-Wigman ~\cite{GranvilleWigman} in the $2$-dimensional case, again slightly above the Planck scale $r>E^{-1/2+o(1)}$. In this regime, by proving an upper bound for $\Vc(X_{f_{n},r})$ beyond $(\E[X_{f_{n},r}])^{2} = O(r^4)$, valid for {\em all} $n\in S_{2}$, under some flatness assumption on $f_{n}$
(cf. Definition \ref{def:ultraflat} below), they established \eqref{eq:|mass-exp|=o(r^d)} for {\em ``typical"}, if {\em not all} $x\in\Tb^{2}$.
It would be desirable to find a regime where it is possible to analyse the precise asymptotic behaviour of the variance $\Vc(X_{f_{n},r})$
of $X_{f_{n},r}$, and, if possible, determine the limit distribution law for $X_{f_{n},r}$; our principal results below achieve both of these
in the $2$-dimensional case, and the former in the $3$-dimensional one (see theorems \ref{thm:VarMain} and \ref{thm:Var3D}).
Such an approach of bounding the discrepancy variance while averaging over ball centres was recently used by Sarnak ~\cite{Sa} for mass distribution of forms on symmetric spaces, and P. Humphries ~\cite{Humphries} for mass distribution of automorphic forms.

\subsection{Statement of the main results for $d=2,3$: asymptotics for the variance, CLT}

Our principal results below are applicable to ``flat" functions
for $d=2,3$, understood in suitable, more and less restrictive,
senses. For example, ``Bourgain's eigenfunction" \cite{Bourgain}
\begin{equation}
\label{eq:BourgainEF}
f_{n}\left(x\right)=\frac{1}{\sqrt{N}}\sum_{\lambda\in\mathcal{E}_{n}}\varepsilon_{\lambda}e\left(\left\langle x,\lambda\right\rangle \right)
\end{equation}
with $\varepsilon_{\lambda}=\pm1$ for every $\lambda\in\mathcal{E}_{n}$, i.e. corresponding to
$\left|c_{\lambda}\right|=N^{-1/2},$ satisfies any of the flatness conditions in the most restrictive sense. Denote $\Bc_{n}$ to be the class of Bourgain's eigenfunctions.

Our first principal result determines the precise asymptotic behaviour of the variance $\Vc(X_{f_n,r})$
for the $2$-dimensional case, and moreover asserts that the moments of the standardized random $L^{2}$-mass of $f_{n}$ are asymptotically Gaussian; we subsequently deduce a Central Limit Theorem (see Corollary \ref{cor:CLT_result}). For the sake of elegance of presentation, it is formulated for Bourgain's eigenfunctions \eqref{eq:BourgainEF}; below we formulate a more general result which holds for a larger class of flat eigenfunctions (see Theorem \ref{thm:VarMainGeneralized} in section \ref{sec:statement results strong}), and later
a result where the averaging over the ball centre $x$ is itself restricted to shrinking balls (Theorem \ref{thm:VarMainExplRestricted}
in section \ref{sec:RestrictedAverages}).

\begin{theorem}[Gaussian moments, $d=2$, Bourgain's eigenfunctions]
\label{thm:VarMain}
There exists a density one sequence
$S_{2}'\subseteq S_{2}$ so that the following holds. Let $r_{0}=r_{0}\left(n\right)=n^{-1/2}T_{0}\left(n\right)$ with
$T_{0}\left(n\right)\to\infty $.

\begin{enumerate}
\item Fix a number $\epsilon>0$, and suppose that $ T_0(n) < \left(\log n\right)^{\frac{1}{2}\log\frac{\pi}{2}-\epsilon} $.
Then as $n\to\infty$ along $S_{2}'$ we have
\begin{equation}
\label{eq:var asympt d=2 Bourgain}
\Vc\left(X_{f_{n},r}\right)\sim\frac{16}{3 \pi}r^{4}T^{-1}
\end{equation}
uniformly for all 
\begin{equation}
\label{eq:r0<r<n^-1/2*discr}
r_{0} < r <n^{-1/2}\left(\log n\right)^{\frac{1}{2}\log\frac{\pi}{2}-\epsilon}
\end{equation}
and $ f_n \in \Bc_{n} $, where $T:=n^{1/2}r.$

\item

Under the above notation let
\begin{equation}
\label{eq:standardizedX}
\hat{X}_{f_{n},r}:=\frac{X_{f_{n},r}-\E[X_{f_{n},r}]}{\sqrt{\Vc\left(X_{f_{n},r}\right)}}
\end{equation}
be the standardized random $L^{2}$-mass of $f_{n}$, $r_{1}=r_{1}(n)=n^{-1/2}T_{1}\left(n\right)$,
and suppose further that the sequence of numbers $T_{1}(n)>T_{0}(n)$ satisfies $T_{1}(n)=O\left(N^{\xi}\right)$
for every $\xi>0$.
Then for all $k\ge 3$ the $k$-th the moment of $\hat{X}_{f_{n},r}$ converges, for $n\rightarrow\infty$ along
$S_{2}'$, to the standard Gaussian moment
\begin{equation}
\label{eq:moments Gaussian lim}
\E[\hat{X}_{f_{n},r}^{k}] \rightarrow \E[Z^{k}],
\end{equation}
uniformly for $r_{0}<r<r_{1}$ and $ f_n \in \Bc_{n} $,
where $Z\sim N(0,1)$ is the standard Gaussian variable.

\end{enumerate}
\end{theorem}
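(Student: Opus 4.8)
The plan is to compute the Fourier expansion of $X_{f_n,r;x}$ in the ball-centre variable $x$ and to read off the variance and the higher moments directly from this expansion, exploiting the well-separation (Bourgain--Rudnick) property of the frequency set $\Ec_n$ that holds along a density one sequence $S_2'$. Writing $f_n(y)^2=\sum_{\lambda,\mu\in\Ec_n}c_\lambda\overline{c_\mu}\,e(\langle y,\lambda-\mu\rangle)$ and integrating over $B_x(r)$, one gets
\begin{equation*}
X_{f_n,r;x}=\sum_{\lambda,\mu\in\Ec_n}c_\lambda\overline{c_\mu}\,e(\langle x,\lambda-\mu\rangle)\,\widehat{\mathbf 1_{B_0(r)}}(\lambda-\mu),
\end{equation*}
so the zero ($\lambda=\mu$) term is the expectation $\vol(B_0(r))$ (up to the normalisation), and the fluctuation $X_{f_n,r}-\E[X_{f_n,r}]$ is a trigonometric polynomial whose coefficients are indexed by the difference set $\Ec_n-\Ec_n$. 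For Bourgain's eigenfunctions $|c_\lambda|^2=1/N$, so the variance is
\begin{equation*}
\Vc(X_{f_n,r})=\frac1{N^2}\sum_{\substack{\lambda,\mu,\lambda',\mu'\in\Ec_n\\ \lambda-\mu=\lambda'-\mu'\neq 0}}\varepsilon_\lambda\varepsilon_\mu\varepsilon_{\lambda'}\varepsilon_{\mu'}\,\widehat{\mathbf 1_{B_0(r)}}(\lambda-\mu)\,\overline{\widehat{\mathbf 1_{B_0(r)}}(\lambda'-\mu')}.
\end{equation*}
The key number-theoretic input is that along $S_2'$ the only solutions of $\lambda-\mu=\lambda'-\mu'$ on the circle $\|\cdot\|^2=n$ are the diagonal ones ($\lambda=\lambda'$, $\mu=\mu'$) plus genuinely small correction terms (no nontrivial ``quasi-correlations''); this is exactly the content of the Bourgain--Rudnick/Granville--Wigman separation results referenced in the introduction. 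Hence the variance reduces to the diagonal sum $\frac1{N^2}\sum_{\lambda\neq\mu}|\widehat{\mathbf 1_{B_0(r)}}(\lambda-\mu)|^2$, and the asymptotics of the Bessel-type Fourier transform $\widehat{\mathbf 1_{B_0(r)}}(\xi)\sim c\,r\,\|\xi\|^{-3/2}$ for $\|\xi\|r$ large, combined with counting pairs $\lambda-\mu$ at distance $\asymp t$ on the circle, yields the claimed $\frac{16}{3\pi}r^4 T^{-1}$ after the integration in $t$; the upper restriction on $r$ in \eqref{eq:r0<r<n^-1/2*discr} is precisely what guarantees the error from quasi-correlations and from the tail of the Bessel asymptotics is negligible against this main term. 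This is essentially part (1), which I would cite as the already-available Theorem \ref{thm:VarMainGeneralized}; the work of part (2) is the moments.

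For part (2), the strategy is the method of moments: expand $\E[(X_{f_n,r}-\E[X_{f_n,r}])^k]$, which is a sum over $k$-tuples of frequency pairs $(\lambda_i-\mu_i)$ subject to the single closure relation $\sum_{i=1}^k(\lambda_i-\mu_i)=0$ (the only surviving Fourier mode after integrating $x$ over $\Tb^2$), and show that after normalising by $\Vc(X_{f_n,r})^{k/2}$ this tends to $\E[Z^k]$: zero for $k$ odd, and $(k-1)!!$ for $k$ even. The combinatorial heart is a pairing argument: among all $k$-tuples satisfying the closure relation, the dominant contribution comes from those in which the pairs $\{\lambda_i-\mu_i\}$ match up into $k/2$ pairs $\lambda_i-\mu_i=-(\lambda_j-\mu_j)$, each such pair contributing a factor asymptotic to $\Vc(X_{f_n,r})$, and there are $(k-1)!!$ such pairings; all other configurations — ``higher-order correlations'' where three or more difference vectors conspire to cancel without pairing off, or where the individual $\lambda$'s inside a block are not forced to the diagonal — must be shown to be of strictly smaller order. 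This is where the hypothesis $T_1(n)=O(N^\xi)$ for every $\xi>0$ enters: it keeps the relevant frequency differences in the range where the circle is ``generic'' (no unexpected additive structure), so that the only way a sum of difference vectors on $\Ec_n-\Ec_n$ vanishes is the pairing one, up to acceptable error. One also has to handle, within each paired block, the same quasi-correlation dichotomy as in part (1), namely that $\lambda_i-\mu_i=-(\lambda_j-\mu_j)$ forces $\lambda_i=\mu_j$, $\mu_i=\lambda_j$ up to controlled exceptions.

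The main obstacle I anticipate is precisely the estimate on the non-paired ``off-diagonal'' terms in the moment expansion: one needs a uniform bound showing that the number of $k$-tuples $(\lambda_1,\mu_1,\dots,\lambda_k,\mu_k)\in\Ec_n^{2k}$ with $\sum(\lambda_i-\mu_i)=0$ but \emph{not} arising from a perfect pairing is smaller, by a power of $N$ (or by the relevant power of $T$), than the $\sim N^k$-type count of the paired configurations, \emph{uniformly} in $r$ over the whole range $r_0<r<r_1$ and uniformly along the density one sequence. This is an additive-combinatorics statement about the sphere $\Ec_n$ — controlling $k$-fold sumsets and the number of representations of $0$ by signed sums of lattice points on a circle — and it is exactly the type of input the quasi-correlation machinery of Granville--Wigman (cited above) is designed to provide; carefully tracking how the permitted range of $T=n^{1/2}r$ interacts with these bounds, and making the Gaussian approximation for the Fourier transform $\widehat{\mathbf 1_{B_0(r)}}$ uniform in that range, will be the delicate bookkeeping. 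Once those estimates are in place, the convergence \eqref{eq:moments Gaussian lim} follows from the standard fact that a sequence of random variables whose moments converge to those of $N(0,1)$ converges in distribution (and since the Gaussian is determined by its moments, this also yields Corollary \ref{cor:CLT_result}).
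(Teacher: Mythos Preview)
Your overall architecture --- Fourier expansion in the ball centre $x$, reduction of the variance to a diagonal sum, and a pairing/moment-method argument for the higher moments --- is indeed the one the paper follows. But you have misidentified the number-theoretic inputs at two places, and these misidentifications matter.

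For part (1), the reduction of the variance to the diagonal sum $\sum_{\lambda\neq\mu}|c_\lambda|^2|c_\mu|^2|\widehat{\mathbf 1_{B_0(r)}}(\lambda-\mu)|^2$ does \emph{not} use Bourgain--Rudnick separation or any quasi-correlation machinery; it follows from Zygmund's elementary observation that on a circle $\lambda-\mu=\lambda'-\mu'\neq 0$ forces $(\lambda,\mu)=(\lambda',\mu')$ or $(\lambda,\mu)=(-\mu',-\lambda')$, i.e.\ $\Sc_n(4)=\Dc_n(4)$ for \emph{every} $n\in S_2$. What actually forces the density-one restriction and produces the range $T<(\log n)^{\frac12\log\frac{\pi}{2}-\epsilon}$ is the Erd\H{o}s--Hall bound $\Delta(n)\le(\log n)^{-\frac12\log\frac{\pi}{2}+\epsilon}$ on the angular discrepancy of $\Ec_n$: one needs it to turn the discrete sum $\sum_{\lambda\neq\mu}h_2(T\|\widehat\lambda-\widehat\mu\|)$ into the integral $\int_0^2 h_2(Ts)\,\text{d}F(s)$ with $F(s)\sim s/\pi$, and the error $O(\Delta(n))$ from this comparison is what competes with the main term $T^{-1}$. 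Your sketch never mentions equidistribution of lattice points, and without it the ``counting pairs at distance $\asymp t$'' step and the origin of the upper restriction on $r$ are unexplained.

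For part (2), after integrating in $x$ one is left with the \emph{exact} relation $\sum_{i=1}^k(\lambda_i-\mu_i)=0$, i.e.\ elements of $\Sc_n(2k)$; there are no nonzero-but-small sums to control, so quasi-correlations are irrelevant here (they enter only in the restricted-average variant of the theorem). The input the paper uses is Bourgain's \emph{diagonal domination} $|\Sc_n(2k)|=|\Dc_n(2k)|+O(N^{k-\gamma_k})$, valid along a density-one sequence. Once one is on the diagonal set $\Dc_n(2k)$, the paper partitions admissible tuples by their ``structure set'' $\{l_1,\dots,l_m\}$ and shows, again using the Erd\H{o}s--Hall discrepancy via the bound $N^{-1}\sum_\lambda|g_2(r\|\lambda-\lambda_0\|)|=O(T^{-1})$, that a block of size $l_j$ contributes $O(T^{-(l_j-1)})$; only the all-twos structure survives and gives $(k-1)!!\,\Vc^{k/2}$. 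The hypothesis $T_1(n)=O(N^\xi)$ is used to ensure the error $T^{k/2}N^{-\gamma_k}$ from the non-diagonal correlations is negligible after dividing by $\Vc^{k/2}\asymp T^{-k/2}$ --- not, as you wrote, to suppress additive structure via quasi-correlations. So your proof would go through once you replace ``Bourgain--Rudnick/Granville--Wigman quasi-correlations'' by ``Zygmund $+$ Erd\H{o}s--Hall'' in part (1) and by ``Bourgain diagonal domination $+$ Erd\H{o}s--Hall'' in part (2).
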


The claimed uniform asymptotics \eqref{eq:var asympt d=2 Bourgain} of the variance means explicitly that, as $n\rightarrow\infty$ along $S_{2}'$,
one has
\begin{equation}
\label{eq:var unif asymp d=2}
\sup\limits_{\substack{r_0 < r < \left(\log n\right)^{\frac{1}{2}\log\frac{\pi}{2}-\epsilon} \\ f_n \in \Bc_{n}}} \left|\frac{\Vc\left(X_{f_{n},r}\right)}{\frac{16}{3 \pi}r^{4}T^{-1}}  -    1\right| \rightarrow 0
\end{equation}
and the uniform convergence \eqref{eq:moments Gaussian lim} of the moments means that for every $k\ge 3$,
\begin{equation*}
\sup\limits_{\substack{r_0 < r < r_1 \\ f_n \in \Bc_{n}}} \left|\E[\hat{X}_{f_{n},r}^{k}] - \E[Z^{k}]\right| \rightarrow 0.
\end{equation*}
Concerning the restricted range \eqref{eq:r0<r<n^-1/2*discr} in Theorem \ref{thm:VarMain} (and \eqref{eq:var unif asymp d=2}) 
for the possible radii, it is directly related to a well-known result on the angular distribution of
lattice points in $\Ec_{n}$, for generic $n\in S_{2}$. 
Namely, it was shown ~\cite{ErdosHall} that $\Ec_{n}$, projected by homothety to the unit circle, is equidistributed,
and moreover, a quantitative measure for the discrepancy is asserted (see section \ref{sec:ang distr} below,
and, in particular, \eqref{eq:D_n_epsilon}), satisfied by {\em generic} $n\in S_{2}$. 
Bourgain ~\cite{Bourgain} observed that $f_{n}\in \Bc_{n}$, when averaged over $x\in \Tb^{d}$, exhibits Gaussianity in the following sense. 
Let $T>0$ be a fixed number, and define the scaled function $\varphi_{x}:[-1,1]^{2}\rightarrow\R$ around $x$ as 
\begin{equation}
\label{eq:varphi rand x def}
\varphi_{x}(y):= f_{n}\left( x+ \frac{T}{\sqrt{n}}\cdot y\right),
\end{equation}
i.e. the trace of $f_{n}$ on the side-$2\frac{T}{\sqrt{n}}$ square centred at $x$. 
It was found ~\cite{Bourgain}, that, upon thinking of $x\in\Tb^{2}$ as {\em random}, and 
$\varphi_{x}(\cdot)$ as a {\em random field} indexed by $[-1,1]^{2}$, it converges, in a suitable sense, to
a particular {\em Gaussian} field (``monochromatic isotropic waves") on $\R^{2}$, restricted
to $[-1,1]^{2}$. This allows one to infer some results on the (deterministic) functions $f_{n}\in \Bc_{n}$
from the analogous results on the limit Gaussian random field. 
We may then reinterpret the quantitative version \eqref{eq:D_n_epsilon} of the angular equidistribution of lattice points 
as allowing the parameter $T$ in \eqref{eq:varphi rand x def} to grow as a (positive) logarithmic power of $n$,
while still retaining the said asymptotic Gaussianity, also allowing for the comparison between the mass distribution
of $f_{n}$ w.r.t. the position and mass distribution of monochromatic isotropic waves. 
Our intuition regarding the possibility of carrying on the explained ``de-randomisation" argument  
for establishing results of similar nature to 
the presented results was recently validated by Sartori ~\cite{Sartori}.

\vspace{2mm}

An application of the standard theory \cite[§XVI.3 Lemma 2]{Feller} allows us to infer a uniform Central Limit Theorem for the random variables
$\hat{X}_{f_{n},r}$ from the convergence \eqref{eq:moments Gaussian lim} of their respective moments to the Gaussian ones.

\begin{corollary}
\label{cor:CLT_result}
In the setting of Theorem \ref{thm:VarMain} part (2), the distribution of the random variables $\{\hat{X}_{f_{n},r}\}$ converges uniformly to
the standard Gaussian distribution: as $n\rightarrow\infty$ along $S_{2}'$
\begin{equation*}
\meas\{ x\in \mathbb{T}^2:\: \hat{X}_{f_{n},r;x} \le t\} \rightarrow
\frac{1}{\sqrt{2\pi}}\int\limits_{-\infty}^{t}e^{-z^{2}/2}dz,
\end{equation*}
uniformly for $t\in\R$, $r_{0}<r<r_{1}$ and $ f_n \in \Bc_{n} $.
\end{corollary}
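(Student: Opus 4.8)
The plan is to deduce the corollary from the convergence of moments in Theorem~\ref{thm:VarMain}(2) by the classical method of moments. First I would record the two trivial moment identities $\E[\hat{X}_{f_{n},r}]=0$ and $\E[\hat{X}_{f_{n},r}^{2}]=1$, which hold by the very definition \eqref{eq:standardizedX} of the standardized mass; combined with \eqref{eq:moments Gaussian lim} this shows that \emph{all} moments of $\hat{X}_{f_{n},r}$ converge, as $n\to\infty$ along $S_{2}'$, to the corresponding moments $\E[Z^{k}]$ of $Z\sim N(0,1)$, and that this convergence is uniform over $r_{0}<r<r_{1}$ and $f_{n}\in\Bc_{n}$.

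Next, since the Gaussian law is uniquely determined by its moments (its moment sequence satisfies Carleman's condition), the method of moments --- in the precise form of \cite[\S XVI.3 Lemma 2]{Feller} --- converts this moment convergence into weak convergence of the distributions of $\hat{X}_{f_{n},r}$ to the standard Gaussian. Because the limiting distribution function $\Phi(t)=\frac{1}{\sqrt{2\pi}}\int_{-\infty}^{t}e^{-z^{2}/2}\,dz$ is continuous, indeed uniformly continuous, on $\R$, P\'olya's theorem upgrades this to \emph{uniform} convergence of the distribution functions in $t\in\R$.

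To obtain the uniformity also in the parameters $r$ and $f_{n}$ I would argue by contradiction via a subsequence extraction: if the asserted uniform convergence failed, there would be $\delta>0$, a subsequence $n_{j}\to\infty$ along $S_{2}'$, radii $r^{(j)}\in(r_{0}(n_{j}),r_{1}(n_{j}))$, eigenfunctions $f_{n_{j}}\in\Bc_{n_{j}}$ and points $t_{j}\in\R$ with $\bigl|\meas\{x\in\Tb^{2}:\hat{X}_{f_{n_{j}},r^{(j)};x}\le t_{j}\}-\Phi(t_{j})\bigr|\ge\delta$. Applying the uniform moment convergence along this specific sequence of parameters, all moments of $\hat{X}_{f_{n_{j}},r^{(j)}}$ still converge to the Gaussian ones, so by the previous paragraph the distribution functions converge to $\Phi$ uniformly in $t$, contradicting the lower bound $\delta$ for $j$ large. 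This completes the argument. I do not expect any genuine obstacle here: the corollary is a soft consequence of Theorem~\ref{thm:VarMain}, and the only points requiring minor care are the bookkeeping of the uniformity in $(r,f_{n})$ and the correct invocation of P\'olya's theorem to pass from weak convergence to uniform convergence in $t$.
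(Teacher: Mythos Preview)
Your proposal is correct and follows the same approach as the paper, which simply remarks (just before stating the corollary) that the result is an application of the standard method of moments via \cite[\S XVI.3 Lemma~2]{Feller}. You have additionally spelled out the routine details---the trivial identities for $k=1,2$, P\'olya's theorem for uniformity in $t$, and the subsequence argument for uniformity in $(r,f_{n})$---none of which the paper makes explicit.
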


For the $3$-dimensional case, for Bourgain's eigenfunctions, we only claim precise asymptotic result on $\text{\ensuremath{\mathcal{V}}}\left(X_{f_{n},r}\right)$, the good news being that
the claimed results are valid for {\em all} energies satisfying the natural congruence assumptions.

\begin{theorem}[Asymptotics for the variance for $d=3$, Bourgain's eigenfunctions]
\label{thm:Var3D}
There exists a number $\eta>0$ such that if $r_{0}=r_{0}(n)=n^{-1/2}T_{0}(n)$ with $T_{0}(n)\rightarrow\infty$,
then for all $n\not\equiv0,4,7\,\left(8\right)$ we have
\[
\text{\ensuremath{\mathcal{V}}}\left(X_{f_{n},r}\right)\sim r^{6}T^{-2},
\]
uniformly for $r_{0} <  r < n^{-1/2+\eta}$ and $ f_n \in \Bc_{n} $.
\end{theorem}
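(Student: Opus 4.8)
The plan is to expand $X_{f_n,r}$ in Fourier series and compute the variance as a sum over lattice-point differences lying on the energy surface. Writing $f_n(y)^2 = \sum_{\lambda,\mu\in\Ec_n} c_\lambda \overline{c_\mu}\, e(\langle y,\lambda-\mu\rangle)$ and integrating over $B_x(r)$, one obtains after taking the mean over $x\in\Tb^3$ that the zero-frequency term contributes $\E[X_{f_n,r}]=\vol(B_x(r))\|f_n\|_2^2=\tfrac{4}{3}\pi r^3$, while
\begin{equation*}
\Vc(X_{f_n,r}) = \sum_{\substack{\lambda,\mu,\lambda',\mu'\in\Ec_n\\ \lambda-\mu=\mu'-\lambda'\neq 0}} c_\lambda\overline{c_\mu}\,\overline{c_{\lambda'}}c_{\mu'}\, \bigl|\widehat{\mathbf 1_{B(r)}}(\lambda-\mu)\bigr|^2,
\end{equation*}
where $\widehat{\mathbf 1_{B(r)}}$ is the Fourier transform of the indicator of the radius-$r$ ball, an explicit Bessel-type function that decays like $|\xi|^{-2}$ in dimension $3$ and is $\asymp r^6$ for $|\xi|\lesssim r^{-1}$. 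For Bourgain's eigenfunctions all $|c_\lambda|^2 = N^{-1}$, so the coefficient product is $\varepsilon_\lambda\varepsilon_\mu\varepsilon_{\lambda'}\varepsilon_{\mu'} N^{-2}$; the first step is to show that the ``diagonal'' solutions of $\lambda-\mu=\mu'-\lambda'$ — namely $\{\lambda'=\mu,\ \mu'=\lambda\}$ and $\{\lambda=\mu,\ \lambda'=\mu'\}$ (the latter excluded by $\lambda\neq\mu$) — dominate, giving the main term. The diagonal $\lambda'=\mu$, $\mu'=\lambda$ yields $N^{-2}\sum_{\lambda\neq\mu}|\widehat{\mathbf 1_{B(r)}}(\lambda-\mu)|^2$, which should be evaluated asymptotically.

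The heart of the matter is therefore a lattice-point count: I need the asymptotics of $\sum_{\lambda,\mu\in\Ec_n}|\widehat{\mathbf 1_{B(r)}}(\lambda-\mu)|^2$ as $n\to\infty$. Using that $r>n^{-1/2}T_0(n)$ with $T_0\to\infty$, the vectors $\lambda-\mu$ with $|\lambda-\mu|\lesssim r^{-1}\ll \sqrt n$ are the relevant ones, and one rescales $\lambda/\sqrt n,\mu/\sqrt n$ onto the unit sphere $S^2$. By Siegel's theorem $N=n^{1/2+o(1)}$ and, crucially, the lattice points on $\sqrt n\, S^2$ equidistribute on the sphere (Duke, Golubeva–Fomenko, Iwaniec) with a power-saving discrepancy for $n$ in a suitable density-one — in fact full, under the congruence condition — set; this is what forces the restriction $r<n^{-1/2+\eta}$, since the equidistribution is only quantitatively effective for $|\xi|$ up to $n^{\eta}$. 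Converting the sum to an integral over $S^2\times S^2$ against $|\widehat{\mathbf 1_{B(r)}}|^2$, substituting $u=\sqrt n(\lambda'-\mu')$-type variables and using the scaling $\widehat{\mathbf 1_{B(r)}}(\xi)=r^3\,\widehat{\mathbf 1_{B(1)}}(r\xi)$, the leading term becomes $N^{-2}\cdot N^2\cdot$ (an absolutely convergent integral) $\cdot\, r^6 T^{-2}$, where the factor $T^{-2}=(n^{1/2}r)^{-2}$ arises from the density of lattice-point differences near a fixed direction being $\asymp N^2/n =N^2 r^2/T^2$ over a ball of radius $r^{-1}$. Pinning down the constant requires computing $\int_{\R^3}|\widehat{\mathbf 1_{B(1)}}(\xi)|^2\,d\sigma(\xi)$ suitably weighted by the pushforward of surface measure, which by Parseval / the classical identity $\int_{S^2}e(\langle\xi,\omega\rangle)d\omega = \tfrac{\sin(2\pi|\xi|)}{\pi|\xi|}$ should collapse to an elementary integral evaluating to the claimed constant $1$.

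The remaining work is to bound the off-diagonal contribution — quadruples $(\lambda,\mu,\lambda',\mu')$ on $\Ec_n$ with $\lambda-\mu=\mu'-\lambda'\neq 0$ that are not of diagonal type. This is a \emph{quasi-correlation} / Besicovitch-type estimate: one needs that the number of such quadruples with $|\lambda-\mu|$ in a given dyadic range is $o(N^2)$ after weighting by $|\widehat{\mathbf 1_{B(r)}}(\lambda-\mu)|^2\asymp r^6$ (for small differences) and is controlled by the $\ell^2$-decay of $\widehat{\mathbf 1_{B(r)}}$ for large differences. For $d=3$ the relevant input is the bound on the number of solutions to $\lambda+\lambda'=\mu+\mu'$ on $\Ec_n$ with $\lambda+\lambda'$ fixed, equivalently the number of lattice points on an intersection of the sphere $\|\cdot\|^2=n$ with an affine sphere — this is where one invokes results on representations by ternary quadratic forms and the Bourgain–Rudnick / Benatar–Maffucci type counts; I expect this off-diagonal bound, uniform in $r$ up to $n^{-1/2+\eta}$, to be the main obstacle, and the parameter $\eta$ in the statement is exactly the margin this estimate permits. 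Finally, assembling the main term and the error, and checking that the estimates are uniform over $f_n\in\Bc_n$ (trivial, since the $\pm1$ signs only enter through the already-extracted diagonal, the off-diagonal being bounded in absolute value) and over $r_0<r<n^{-1/2+\eta}$, completes the proof.
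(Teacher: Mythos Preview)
Your approach is essentially the paper's: reduce the variance to a 4-fold lattice sum, control the off-diagonal via the Benatar--Maffucci length-4 correlation bound \eqref{eq:3d 4-corr} (yielding an error $O(r^6 N^{-1/4+\epsilon})$), and evaluate the diagonal main term by the quantitative equidistribution $\Delta_3(n)\le n^{-\eta}$ of Bourgain--Rudnick--Sarnak, which the paper packages cleanly as integration by parts against the pair-counting function $F_3(s)=s^2/4+O(\Delta_3(n))$ together with the evaluation $\int_0^\infty s\,h_3(s)\,ds=(2\pi)^{-3}$.

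Two small corrections. First, there are \emph{two} diagonal families, not one: with your constraint $\lambda-\mu=\mu'-\lambda'$, besides $(\lambda',\mu')=(\mu,\lambda)$ you also have $(\lambda',\mu')=(-\lambda,-\mu)$, and both contribute $|c_\lambda|^2|c_\mu|^2$ for $f_n\in\Bc_n$; as written you would be off by a factor of $2$ in the leading constant. Second, the off-diagonal input here is the exact length-$4$ correlation count (sum $=0$), not a quasi-correlation estimate; quasi-correlations (small nonzero sums, Definition~\ref{def:separatedness Ac}) only enter when the average over $x$ is restricted to shrinking balls as in Section~\ref{sec:RestrictedAverages}.
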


The meaning of the uniform statement in Theorem \ref{thm:Var3D} is that
\begin{equation}
\sup_{\begin{subarray}{c}
r_{0} < r < n^{-1/2+\eta} \\ f_n \in \Bc_{n} \end{subarray}}\left|\frac{\text{\ensuremath{\mathcal{V}}}\left(X_{f_{n},r}\right)}{r^{6}T^{-2}}-1\right|\to0\label{eq:AympVar3D}
\end{equation}
as $n\to\infty$ along $n\not\equiv0,4,7\,\left(8\right)$, cf. \eqref{eq:var unif asymp d=2} in the $2$-dimensional case.

\subsection{Statement of the main results for $d=2,3$: more general upper and lower bounds}

\label{sec:statement results weak}

Let $f_{n}$ be as in \eqref{eq:fn sum exp}, and consider the vector
\begin{equation}
\label{eq:v_def}
\underline{v}:=(|c_{\lambda}|^{2})_{\lambda \in\Ec_{n}} \in \R^{\mathcal{E}_n} 
\end{equation}
of the squared absolute values of its coefficients; we denote its normalised $\ell_{\infty}$-norm
\begin{equation}
\label{eq:vnorm inf}
[\underline{v}]_{\infty} := N \cdot \max\limits_{\lambda\in\Ec_{n}}|c_{\lambda}|^{2}.
\end{equation}

\begin{definition}[Ultraflat functions {~\cite[Definition 1.9]{GranvilleWigman}}]
\label{def:ultraflat}

We say that an eigenfunction $f_{n}$ in \eqref{eq:fn sum exp} is $\epsilon$-ultraflat if its coefficients satisfy
\begin{equation}
\label{eq:ultra_flat_cond}
[\underline{v}]_{\infty} \le N^{\epsilon}.
\end{equation}
Denote  $\Uc_{n;\epsilon}$ to be the class of $\epsilon$-ultraflat functions.

\end{definition}

The following couple of theorems establish more general upper and lower bounds on $\Vc(X_{f_n,r})$
in the $2$ and $3$-dimensional cases respectively.

\begin{theorem}[Bounds for the variance for ultra-flat eigenfunctions, $d=2$]
\label{thm:UpperBound2d}
There exists a density $1$ sequence $S_{2}'\subseteq S_{2}$ and an absolute constant $C>0$ such that
for every $ \epsilon>0 $, $ \eta >0 $, $r_{0}=r_{0}(n)=n^{-1/2}T_{0}(n)$ with
$T_{0}(n)\rightarrow\infty$ arbitrarily slowly, and $r=n^{-1/2}T>r_{0}$, as $n\to\infty$ along $S_{2}'$ we have
\begin{equation}
\label{eq:bounds var ultraflat d=2}
T^{-1}N^{-2\epsilon}\ll    \frac{\Vc(X_{f_n,r})}{r^{4}} \ll N^{\epsilon}\cdot \left(T^{-1}+(\log{n})^{-\frac{1}{2}\log{\frac{\pi}{2}}+\eta}   \right)
\end{equation}
uniformly for $r_{0} < r< Cn^{-1/2}N^{1-\epsilon}$ and $f_{n}\in \Uc_{n;\epsilon}$,
with the constant involved in the ``$\ll$"-notation in \eqref{eq:bounds var ultraflat d=2}  is absolute for the lower bound, and depends only on $\eta$ for the upper bound.
Moreover, the upper bound is valid for the extended range $r > r_{0}$ (with no upper bound on $r$ imposed), and the lower bound is valid for every $ n\in S_2 $.
\end{theorem}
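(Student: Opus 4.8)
The plan is to work with the Fourier expansion of the indicator of the ball to get an exact expression for $X_{f_n,r;x}$ and its moments as a function of $x$, and then extract the variance as a sum over ``near-diagonal'' frequency pairs. Concretely, writing $f_n(y)^2$ as a trigonometric polynomial with frequencies $\mu = \lambda - \lambda'$, $\lambda,\lambda'\in\Ec_n$, and expanding $\mathbf 1_{B_x(r)}$ in a Fourier series, one obtains $X_{f_n,r;x} - \E[X_{f_n,r}]$ as a sum over pairs $\lambda\ne\lambda'$ of $c_\lambda\overline{c_{\lambda'}}$ times $\widehat{\mathbf 1_{B_0(r)}}(\lambda-\lambda')\,e(\langle x,\lambda-\lambda'\rangle)$. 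By Parseval in $x$,
\begin{equation*}
\Vc(X_{f_n,r}) = \sum_{\substack{\lambda,\lambda',\mu,\mu'\in\Ec_n \\ \lambda-\lambda' = \mu-\mu' \ne 0}} c_\lambda\overline{c_{\lambda'}}\,\overline{c_\mu}c_{\mu'}\,\bigl|\widehat{\mathbf 1_{B_0(r)}}(\lambda-\lambda')\bigr|^2 .
\end{equation*}
The Fourier coefficient $\widehat{\mathbf 1_{B_0(r)}}(\xi)$ is, up to normalisation, $r^2 \cdot \frac{J_1(2\pi r|\xi|)}{r|\xi|}$, which is $\asymp r^4$ near $\xi = 0$ (more precisely for $|\xi| \lesssim 1/r$) and decays like $r^3|\xi|^{-3}$ beyond that. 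The main contribution comes from $\lambda - \lambda'$ small, i.e. from $\lambda,\lambda'$ that are close on the circle of radius $\sqrt n$.

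The first main step is the \textbf{diagonal term} $\lambda' = \mu$, $\lambda = \mu'$ (and its conjugate), which contributes
\begin{equation*}
2\sum_{\lambda\ne\lambda'} |c_\lambda|^2|c_{\lambda'}|^2 \bigl|\widehat{\mathbf 1_{B_0(r)}}(\lambda-\lambda')\bigr|^2 .
\end{equation*}
This is genuinely positive, so it immediately gives the lower bound once I can show it is $\gg r^4 T^{-1} N^{-2\epsilon}$: here I bound $|c_\lambda|^2|c_{\lambda'}|^2 \ge N^{-2}[\underline v]_\infty^{-2} \ge N^{-2}N^{-2\epsilon}$ (using $L^2$-normalisation forces $\max|c_\lambda|^2 \ge 1/N$ so the product of the two smallest is $\gtrsim N^{-2-2\epsilon}$ — actually one uses that \emph{every} $|c_\lambda|^2$ is controlled below only in the ultraflat-from-above sense, so for the lower bound I should instead keep only pairs with both coefficients not too small, or simply use $\sum|c_\lambda|^2 = 1$ with Cauchy–Schwarz to retain a positive proportion of the $\ell^2$ mass on large coefficients; this bookkeeping is routine and valid for every $n\in S_2$ since it uses no arithmetic of $\Ec_n$). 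Then I need a \emph{lower} bound on $\sum_{\lambda\ne\lambda'}\bigl|\widehat{\mathbf 1_{B_0(r)}}(\lambda-\lambda')\bigr|^2$; restricting to the $\asymp r^4$ regime $|\lambda-\lambda'|\lesssim 1/r = \sqrt n/T$, and noting that for \emph{any} fixed $\lambda$ the number of $\lambda'\in\Ec_n$ with $|\lambda-\lambda'| \le \sqrt n/T$ is $\gg N/T$ on average over $\lambda$ (this is exactly a consequence of the Erdős–Hall angular equidistribution for the density-one $S_2'$ quoted in the text, and trivially $\gg 1$ — hence $\gg N$ globally — with no average), one gets $\gg r^4 \cdot N^2/T$ after summing in $\lambda$. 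Combining gives the lower bound $\gg r^4 T^{-1}N^{-2\epsilon}$.

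The second main step is the \textbf{off-diagonal / correlation terms}: pairs $(\lambda,\lambda',\mu,\mu')$ with $\{\lambda',\lambda\}\ne\{\mu,\mu'\}$ but $\lambda-\lambda' = \mu-\mu'$. These are governed by ``quasi-correlations'' of $\Ec_n$ — the number of solutions to $\lambda - \lambda' = \mu - \mu'$ with the common difference small — and this is precisely the input that forces the density-one restriction to $S_2'$ and produces the $(\log n)^{-\frac12\log(\pi/2)+\eta}$ term (it is the quantitative Erdős–Hall discrepancy bound, cf. the reference to section \ref{sec:quasi corr} and \eqref{eq:D_n_epsilon}). I would bound $|c_\lambda\overline{c_{\lambda'}}\,\overline{c_\mu}c_{\mu'}| \le N^{-2}[\underline v]_\infty^2 \le N^{-2}N^{2\epsilon}$ on all such terms, sum $\bigl|\widehat{\mathbf 1_{B_0(r)}}\bigr|^2$ over the allowed differences (splitting into $|\xi|\le 1/r$ and $|\xi|>1/r$ dyadically, the tail contributing a convergent geometric sum dominated by the $|\xi|\asymp 1/r$ scale, so the whole thing is $\ll r^4$ per difference $\xi$ weighted by the number of representations), and finally invoke the quasi-correlation count on $S_2'$ to bound the number of quadruples by $N^2 \cdot(T^{-1} + (\log n)^{-\frac12\log(\pi/2)+\eta})$ — the first term being the ``expected'' (random-model) count of near-diagonal quadruples, the second a genuine-error term from the angular discrepancy. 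This yields the upper bound, with the range $r < Cn^{-1/2}N^{1-\epsilon}$ (equivalently $T < CN^{1-\epsilon}$) being exactly the regime where the small-difference shells $\{|\lambda-\lambda'|\lesssim \sqrt n/T\}$ are not yet saturated; that the upper bound persists for all $r>r_0$ follows because enlarging $r$ only shrinks the effective support (in frequency) of $\widehat{\mathbf 1_{B_0(r)}}$ relative to the lattice spacing while $r^4$ grows, so a cruder estimate suffices — the bound $\Vc \ll r^4\cdot(\text{stuff})$ is then dominated by the $(\E X)^2 = O(r^4)$ scale.

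The \textbf{main obstacle} is the off-diagonal quasi-correlation estimate: controlling the number of quadruples $\lambda-\lambda' = \mu-\mu'$ with small common difference, uniformly for all $\Ec_n$ over a density-one set, and tracking exactly how the saving degrades from the heuristic $N^2/T$ to $N^2\bigl((\log n)^{-\frac12\log(\pi/2)+\eta}\bigr)$. This is where the arithmetic of sums of two squares enters in an essential way (multiplicativity of $\Ec_n$ under Gaussian-integer factorisation, and the Erdős–Hall result that the angular discrepancy of $\Ec_n$ is $(\log n)^{-\frac12\log(\pi/2)+o(1)}$ for generic $n$); I expect to cite \eqref{eq:D_n_epsilon} and the results of section \ref{sec:quasi corr} essentially as a black box and spend the real work on converting an angular-discrepancy bound into a count of near-diagonal lattice quadruples weighted by $|\widehat{\mathbf 1}|^2$. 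Everything else — the Fourier/Parseval setup, the Bessel-function asymptotics, and the coefficient bookkeeping via $[\underline v]_\infty \le N^\epsilon$ — is routine.
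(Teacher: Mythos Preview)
Your Fourier/Parseval setup is correct and matches the paper's starting point. But you have misidentified the main obstacle. In two dimensions the off-diagonal terms in your quadruple sum simply do not exist: if $\lambda-\lambda'=\mu-\mu'$ with all four points on a circle, then $\lambda+\mu'=\lambda'+\mu$, and Zygmund's elementary observation (equation \eqref{eq:Zygmund 4-corr}, $\Sc_n(4)=\Dc_n(4)$) forces $\{\lambda,\mu'\}=\{\lambda',\mu\}$, i.e.\ $(\mu,\mu')=(\lambda,\lambda')$. So the variance is \emph{exactly} the ``diagonal'' sum
\[
\Vc(X_{f_n,r})=8\pi^2 r^4\sum_{\lambda\ne\lambda'}|c_\lambda|^2|c_{\lambda'}|^2\,h_2(r\|\lambda-\lambda'\|),
\]
with no error term and no quasi-correlation input whatsoever (this is Lemma \ref{lem:VarExpd2}(1)). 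Your entire ``second main step'' is vacuous, and the paper's actual upper bound is a two-line argument: replace one factor $|c_\lambda|^2\le N^{-1+\epsilon}$, then bound $N^{-1}\sum_\lambda h_2(T\|\widehat\lambda-\widehat{\lambda_0}\|)$ via integration by parts against the angular counting function $F_{\lambda_0}(s)$, using only the Erd\H{o}s--Hall discrepancy bound $\Delta(n)\le(\log n)^{-\frac12\log\frac\pi2+\eta}$. There is no quadruple count.

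For the lower bound your sketch is also off. You cannot bound $|c_\lambda|^2$ from below, and your parenthetical fix (``trivially $\gg 1$, hence $\gg N$ globally'') gives only $N$ pairs, not $N^2/T$. The paper's argument is: set $R=\{\lambda:|c_\lambda|^2\ge 1/(2N)\}$, use normalisation plus ultraflatness to get $\#R\ge N^{1-\epsilon}$, restrict the positive diagonal sum to $\lambda,\lambda'\in R$ with $\|\widehat\lambda-\widehat{\lambda'}\|\le c/T$, and then count such close pairs by pure pigeonhole/Cauchy--Schwarz (Lemma \ref{lem:LowerBoundPairs}: any $M$ points on $\Sc^1$ have $\gg M^2/T$ pairs at distance $\le 1/T$ whenever $T<M/2$). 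This uses no equidistribution at all, which is why the lower bound holds for every $n\in S_2$; your appeal to Erd\H{o}s--Hall here would unnecessarily restrict to $S_2'$. The range condition $T<CN^{1-\epsilon}$ is precisely what makes the pigeonhole lemma applicable with $M=\#R$.
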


\begin{theorem}[Bounds for the variance for ultra-flat eigenfunctions, $d=3$]
\label{thm:UpperBound3d}
There exists a number $\eta>0$ and a constant $C>0$ such that for every $\epsilon>0$,
$r_{0}=r_{0}(n)=n^{-1/2}T_{0}(n)$ with $T_{0}(n)\rightarrow\infty$ arbitrarily slowly,
$r=n^{-1/2}T>r_{0}$, and $n\not\equiv 0,4,7\,\left(8\right)$ we have
\begin{equation}
\label{eq:bounds var ultraflat d=3}
T^{-2}N^{-2\epsilon}  \ll \frac{\Vc(X_{f_{n},r})}{r^{6}} \ll N^{\epsilon} \left( T^{-2}+n^{-\eta}\right),
\end{equation}
uniformly
for $r_{0} < r < Cn^{-1/2}N^{1-\epsilon}$ and $f_{n}\in \Uc_{n;\epsilon}$, where the constants involved in the ``$\ll$"-notation
are absolute. Moreover, the upper bound in \eqref{eq:bounds var ultraflat d=3} is valid for the extended range $r > r_{0}$.

\end{theorem}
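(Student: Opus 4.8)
The plan is to reduce the variance computation to a spectral sum over pairs of lattice points in $\Ec_{n}$, exactly as in the $2$-dimensional treatment of \cite{GranvilleWigman}, and then to exploit the arithmetic of sums of three squares (via Siegel's theorem, $N = n^{1/2+o(1)}$, and separation/quasi-correlation estimates for $\Ec_{3;n}$) to control the off-diagonal terms. Concretely, I would first expand $X_{f_n,r;x}=\int_{B_x(r)}f_n(y)^2\,dy$ using \eqref{eq:fn sum exp}, integrate over $x\in\Tb^3$, and obtain
\begin{equation*}
\Vc(X_{f_n,r}) = \sum_{\substack{\lambda_1,\lambda_2,\lambda_3,\lambda_4\in\Ec_n \\ \lambda_1+\lambda_2 = \lambda_3+\lambda_4 \\ \lambda_1+\lambda_2\ne 0}} c_{\lambda_1}c_{\lambda_2}\overline{c_{\lambda_3}c_{\lambda_4}}\, \widehat{\mathbf{1}_{B(r)}}(\lambda_1+\lambda_2)\,\overline{\widehat{\mathbf{1}_{B(r)}}(\lambda_3+\lambda_4)},
\end{equation*}
where $\widehat{\mathbf{1}_{B(r)}}$ is the Fourier transform of the indicator of the radius-$r$ ball; this is the $3$-dimensional analogue of the formula underlying Theorem \ref{thm:UpperBound2d}. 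For $f_n\in\Bc_n$ one has $|c_\lambda|^2 = 1/N$, so the main term is governed purely by the number of ``diagonal'' solutions $\{\lambda_3,\lambda_4\}=\{\lambda_1,\lambda_2\}$, which contributes $\frac{2}{N^2}\sum_{\lambda_1\ne-\lambda_2}|\widehat{\mathbf{1}_{B(r)}}(\lambda_1+\lambda_2)|^2$.

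The second step is to evaluate this diagonal main term asymptotically. Using the explicit Bessel-type formula for $\widehat{\mathbf{1}_{B(r)}}(\xi)$ in $\R^3$ and the fact that, for $r$ in the stated range, $\widehat{\mathbf{1}_{B(r)}}(\xi)$ is essentially supported (after rescaling $T=n^{1/2}r$) on $|\xi|\lesssim 1/r$, together with the angular equidistribution of $\Ec_{3;n}$ on the sphere, I expect
\begin{equation*}
\frac{2}{N^2}\sum_{\lambda_1\ne -\lambda_2}\bigl|\widehat{\mathbf{1}_{B(r)}}(\lambda_1+\lambda_2)\bigr|^2 \sim r^6 T^{-2},
\end{equation*}
which is the claimed asymptotic. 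Heuristically, each $\lambda_1$ contributes a sum over $\lambda_2$ within distance $\sim T$ (in lattice units of $n^{1/2}$) of $-\lambda_1$ along the sphere; on the $2$-sphere this neighbourhood meets $\Ec_{3;n}$ in $\sim N T^2/n$ points (by equidistribution), and squaring the transform and summing produces the power $r^6T^{-2}$ after normalising. Making this rigorous requires an $\ell^2$ lattice-point count in spherical caps — a mean-value estimate over $\lambda_1\in\Ec_n$ of the number of $\lambda_2\in\Ec_n$ near the antipode — which is where Siegel's bound and the quasi-correlation results of section \ref{sec:quasi corr} enter.

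The third and main step is to bound the off-diagonal contribution — the genuine four-point correlations $\lambda_1+\lambda_2=\lambda_3+\lambda_4$ with $\{\lambda_3,\lambda_4\}\ne\{\lambda_1,\lambda_2\}$ — by $o(r^6T^{-2})$, and more generally by $r^6 T^{-2}\cdot n^{-\eta}$ for the explicit error term in \eqref{eq:bounds var ultraflat d=3}. Here I would invoke the structure theory of such ``Bourgain systems'' on the sphere in $3$ variables: the number of solutions to $\lambda_1+\lambda_2=\lambda_3+\lambda_4$ on $\Ec_{3;n}$ with a prescribed nonzero value of $\lambda_1+\lambda_2$ of size $\lesssim 1/r$ is, for $n$ with $n\not\equiv0,4,7\ (8)$ and $r$ in the admissible range, only a power $n^{-\eta}$ of the diagonal count — this is essentially the content of the separation/quasi-correlation estimates that are quoted to hold for all admissible $n$ in dimension $3$ (unlike $d=2$, where one needs a density-one restriction). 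Weighting by $|c_{\lambda_i}|^2\le N^\epsilon/N$ and summing then yields the $N^\epsilon(T^{-2}+n^{-\eta})$ upper bound of Theorem \ref{thm:UpperBound3d} for ultraflat $f_n$, and specialising to $|c_\lambda|^2=1/N$ and letting $n\to\infty$ gives Theorem \ref{thm:Var3D}. The lower bound $T^{-2}N^{-2\epsilon}$ follows more cheaply: keep only the diagonal main term, which is $\gg r^6T^{-2}\cdot(N\max|c_\lambda|^2)^{-2}\ge r^6 T^{-2}N^{-2\epsilon}$ by Cauchy–Schwarz/positivity, provided the off-diagonal terms cannot produce cancellation larger than the main term — which again rests on the same correlation bound, or alternatively on a positivity trick restricting to a sub-sum with controlled sign.

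I expect the hard part to be step three: controlling the four-point correlations of $\Ec_{3;n}$ uniformly over the full radius range $r_0<r<n^{-1/2+\eta}$ and over \emph{all} admissible $n$. The subtlety is two-fold. First, the constraint $|\lambda_1+\lambda_2|\lesssim 1/r$ allows $1/r$ to be as large as $n^{1/2-\eta}$, i.e. the ``window'' is not small relative to $n^{1/2}$, so one cannot treat $\lambda_2$ as essentially antipodal to $\lambda_1$; one needs genuine equidistribution of $\Ec_{3;n}$ in caps of polynomial-in-$n$ angular size, with a quantitative error — this is where the choice of $\eta>0$ is forced, and where results on sums of three squares in short arcs (Duke-type / spectral estimates) must be brought to bear. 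Second, one must rule out ``arithmetic'' configurations where $\lambda_1+\lambda_2=\lambda_3+\lambda_4$ holds for many quadruples simultaneously — degenerate lines, coincidences coming from the Diophantine structure of $n$ — and show their contribution is at most $n^{-\eta}$ of the main term; for $d=3$ this should follow from the Bourgain–Rudnick-type bounds on the number of lattice points on a circle that is the intersection of the sphere $\|\lambda\|^2=n$ with a hyperplane, which are uniform in $n$. Assembling these inputs with the Bessel asymptotics for $\widehat{\mathbf 1_{B(r)}}$ and the normalisation bookkeeping yields both displayed inequalities.
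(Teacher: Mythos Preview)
Your overall architecture (expand, separate diagonal from off-diagonal, estimate the diagonal via equidistribution) is correct, but you misidentify which pieces are hard and where the two error parameters $N^{-1/4+\epsilon}$ and $n^{-\eta}$ come from, and this leads you to propose much more machinery than is actually needed.

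First, the off-diagonal contribution. You plan to bound the non-diagonal $4$-correlations \emph{with the restriction} $|\lambda_1+\lambda_2|\lesssim 1/r$, invoking quasi-correlations, hyperplane sections of the sphere, etc. None of this is necessary. The paper simply uses the global $4$-correlation bound of Benatar--Maffucci, $|\Sc_{3;n}(4)|=|\Dc_{3;n}(4)|+O(N^{7/4+\epsilon})$ (equation \eqref{eq:3d 4-corr}), together with the trivial pointwise bound $|g_3|\ll 1$. This already gives, for $f_n\in\Uc_{n;\epsilon}$, an off-diagonal error $O([\underline{v}]_\infty^2 r^6 N^{-1/4+\epsilon})=O(r^6 N^{-1/4+3\epsilon})$, uniformly in $r$; this is the content of Lemma \ref{lem:VarExpd2}(2). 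No $r$-dependent correlation analysis is performed.

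Second, the origin of $n^{-\eta}$. In your proposal $n^{-\eta}$ is the saving from off-diagonal correlations. In the paper it is not: it comes from the \emph{diagonal} term, via the spherical cap discrepancy bound $\Delta_3(n)\le n^{-\eta}$ of Bourgain--Rudnick--Sarnak. Once one has the clean formula \eqref{eq:VarFormula3d}, the diagonal sum $\sum_{\lambda\ne\lambda'}|c_\lambda|^2|c_{\lambda'}|^2 h_3(r\|\lambda-\lambda'\|)$ is handled exactly as in the $2$d proof of Theorem \ref{thm:UpperBound2d}: bound one factor $|c_\lambda|^2\le N^{-1+\epsilon}$, rewrite the inner sum as a Stieltjes integral against the cap-counting function, and integrate by parts; the error picks up $\Delta_3(n)$. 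The Duke-type input you mention is present only through this packaged discrepancy bound.

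Third, the lower bound. Your worry about ``off-diagonal cancellation'' is misplaced: the weight $h_3(x)=2\pi^{-1}(2\pi x)^{-4}\big(\tfrac{\sin 2\pi x}{2\pi x}-\cos 2\pi x\big)^2$ is nonnegative, so once you are on the diagonal formula \eqref{eq:VarFormula3d} there is nothing to cancel. Your one-line bound ``diagonal $\gg r^6T^{-2}(N\max|c_\lambda|^2)^{-2}$'' does not follow from Cauchy--Schwarz as stated, because many $|c_\lambda|^2$ may be tiny. The paper instead restricts to the set $R=\{\lambda:|c_\lambda|^2\ge 1/(2N)\}$, shows $\#R\gg N^{1-\epsilon}$ from the $L^2$-normalisation and ultraflatness, and then applies an elementary pigeonhole (Lemma \ref{lem:LowerBoundPairs}(2)): among $M$ points on $\Sc^2$ there are $\gg M^2/T^2$ pairs within distance $1/T$. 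On those pairs $h_3\gg 1$, yielding $\Vc\gg r^6 N^{-2}\cdot(\#R)^2/T^2\gg r^6 N^{-2\epsilon}T^{-2}$.
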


For Bourgain's eigenfunctions, the proofs of Theorem \ref{thm:UpperBound2d} and Theorem \ref{thm:UpperBound3d} yield slightly stronger bounds compared to \eqref{eq:bounds var ultraflat d=2} and \eqref{eq:bounds var ultraflat d=3}, namely
\begin{equation*}
T^{-1}\ll    \frac{\Vc(X_{f_n,r})}{r^{4}} \ll  T^{-1}+(\log{n})^{-\frac{1}{2}\log{\frac{\pi}{2}}+\epsilon}
\end{equation*} for $ d=2 $, and
\begin{equation*}
T^{-2}  \ll \frac{\Vc(X_{f_{n},r})}{r^{6}} \ll   T^{-2}+n^{-\eta}
\end{equation*} for $ d=3 $.

\subsection{Outline of the paper}

The rest of the paper is organised as follows.
In section \ref{sec:statement results strong} we formulate Theorem \ref{thm:VarMainGeneralized}, which, on one hand generalizes Theorem \ref{thm:VarMain} for a larger class of flat eigenfunctions, and on the other hand, explicates
a sufficient condition on $ n\in S_2 $ for its statements to hold; a few examples of application of
Theorem \ref{thm:VarMainGeneralized}, corresponding to different asymptotic behaviour of the
variance \eqref{eq:var asympt d=2 precise}, are also discussed.
Section \ref{sec:Proof_Main_thm_part1} is dedicated to giving a proof of
the first part of Theorem \ref{thm:VarMain} (resp. $1$st part of Theorem \ref{thm:VarMainGeneralized}), whereas the second part of Theorem \ref{thm:VarMain} (resp. $2$nd part of Theorem \ref{thm:VarMainGeneralized}) is proved in section \ref{sec:Proof_Main_thm_part2}.
Theorem \ref{thm:Var3D}, claiming the precise asymptotics for the $L^{2}$-mass variance for Bourgain's eigenfunctions
in $3$d, is proved in section \ref{sec:Proof_3d_theorem}.

In section \ref{sec:ProofOfBoundsThm} we
prove the various upper and lower bounds asserted by theorems \ref{thm:UpperBound2d} and \ref{thm:UpperBound3d}.
A refinement of Theorem \ref{thm:VarMainGeneralized}, where rather than draw $x$ w.r.t. the uniform measure on the full torus,
$x$ is drawn on balls slightly above Planck scale, is presented in section \ref{sec:RestrictedAverages}, and the additional subtleties
of its proof as compared to the proof of Theorem \ref{thm:VarMainGeneralized} are highlighted.
Finally, section \ref{sec:AuxLemmasProof} contains the proofs of all auxiliary lemmas, postponed in course of the proofs of
the various results.

\subsection*{Acknowledgements}

The authors of this manuscript wish to express their gratitude to J. Benatar, A. Granville, P. Kurlberg, Z. Rudnick,
P. Sarnak and M. Sodin for numerous stimulating and fruitful discussions concerning various aspects of our work, and
their interest in our research. It is a pleasure to thank the anonymous referee for his comments on
an earlier version of this manuscript. The research leading to these results has received funding from the European Research Council under the European Union's Seventh Framework Programme (FP7/2007-2013), ERC grant agreement n$^{\text{o}}$ 335141.

\section{On Theorem \ref{thm:VarMain}: CLT for mass distribution, $d=2$}

\label{sec:statement results strong}

In this section we focus on Theorem \ref{thm:VarMain}.
Our first goal is to formulate a result, that on one hand generalises the statement of Theorem \ref{thm:VarMain}
to a larger class of eigenfunctions, and, on the other hand, provides a more explicit control over the
generic numbers $n\in S_{2}$. To this end we discuss the angular distribution of $\lambda\in\Ec_{n}$
(section \ref{sec:ang distr}), and the spectral correlations (section \ref{sec:quasi corr}), also used in the course of the proof of the $3$-dimensional Theorem \ref{thm:Var3D}; we will be able to formulate Theorem \ref{thm:VarMainGeneralized}, as prescribed above,
by appealing to these. In section \ref{eq:examples varying theta} we consider a few scenarios when Theorem \ref{thm:VarMainGeneralized} is applicable, prescribing different asymptotic behaviour for the variance \eqref{eq:var asympt d=2 precise}.

\subsection{Angular equidistribution of lattice points}
\label{sec:ang distr}

For every $\lambda=\left(\lambda_{1},\lambda_{2}\right)\in\mathcal{E}_{n}$,
write $\lambda_{1}+i\lambda_{2}=\sqrt{n}e^{i\phi}$, and denote the
various angles by
\[
0\le\phi_{1}<\phi_{2}<\dots<\phi_{N}<2\pi.
\]
Recall that the discrepancy of the sequence $\phi_{j}$ is defined
by
\begin{equation}
\label{eq:Discrepancy_2d}
\Delta\left(n\right)=\sup_{0\le a\le b\le2\pi}\left|\frac{1}{N}\cdot \#\left\{ 1\le j\le N:\,\phi_{j}\in\left[a,b\right]\,\text{mod\,}2\pi\right\} -\frac{\left(b-a\right)}{2\pi}\right|.
\end{equation}

For every $ \epsilon>0 $, we say that $ n\in S_2 $ satisfies the hypothesis $ \mathcal{D}(n,\epsilon) $ if \begin{equation}
\label{eq:D_n_epsilon}
\Delta\left(n\right)\le \left(\log n\right)^{-\frac{1}{2}\log\frac{\pi}{2}+\epsilon}.
\end{equation}
By Erd\H{o}s-Hall \cite[Theorem 1]{ErdosHall}, there exists a density
one sequence $S_2'(\epsilon)\subseteq S_2 $ such that $ \mathcal{D}(n,\epsilon) $ is satisfied for every $n\in S_2'(\epsilon) $. By a standard diagonalization argument, there exists a density one sequence $ S_2'\subseteq S_2 $ such that $\mathcal{D}(n,\epsilon) $ is satisfied  for \emph{every} $ \epsilon>0 $ and $n\in S_2' $ sufficiently large.
In particular, the angles $\left\{ \phi_{j}\right\} $ are equidistributed
mod $2\pi$ along this sequence, i.e., the lattice points are equidistributed
on the corresponding circles.

\subsection{Spectral correlations in $2d$ (and $3d$)}
\label{sec:quasi corr}

For $d=2$, while computing the moments of $X_{f_{n},r}$ (e.g. for Bourgain's eigenfunction \eqref{eq:BourgainEF}),
with $x$ drawn uniformly on the whole of $\Tb^{2}$, one exploits the orthogonality relations
\begin{equation*}
\int\limits_{\Tb^{2}}e(\langle  \lambda , x  \rangle)dx = \begin{cases}
0 &\lambda\ne 0 \\ 1 &\lambda=0
\end{cases}
\end{equation*}
for $\lambda\in\Z^{2}$ to naturally encounter the length-$l$ spectral correlation problem.
That is, for $l\ge 2$ and $n\in S_{2} $ one is interested in the size of the length-$ l $ spectral correlation set
\begin{equation}
\label{eq:Sc correlations def}
\Sc_{n}(l) = \left\{ (\lambda^{1},\ldots,\lambda^{l})\in(\Ec_{n})^{l}:\: \sum\limits_{i=1}^{l}\lambda^{i}=0  \right\},
\end{equation}
which, by an elementary congruence obstruction argument modulo $ 2 $, is only non-empty for $l=2k$ even.

In this case $l=2k$ we further define the {\em diagonal} correlations set to be
all the permutations of tuples of the form $(\lambda^{1},-\lambda^{1},\ldots, \lambda^{k},-\lambda^{k})$:
\begin{equation}
\label{eq:Dc diag def}
\Dc_{n}(l) = \left\{ \pi(\lambda^{1},-\lambda^{1},\ldots,\lambda^{k},-\lambda^{k}): \lambda^{1},\ldots,\lambda^{k}\in (\Ec_{n})^{k},\,\pi\in S_{l}\right\}.
\end{equation}
The set $\Dc_{n}$ is dominated by non-degenerate tuples (i.e. $\lambda^{i}\ne \pm\lambda^{j}$ for $ i\ne j $), hence its size is asymptotic to
$$|\Dc_{n}(l)|=  \frac{(2k)!}{2^{k}\cdot k!}N^{k}\cdot \left(1 + O_{N\rightarrow\infty}\left( \frac{1}{N} \right)  \right).$$

Clearly, $\Dc_{n}(l)\subseteq \Sc_{n}(l)$ so that, in particular $\Sc_{n}(l) \gg N^{l/2}$. To the other end,
we have $\Sc_{n}(2) = \Dc_{n}(2)$ by the definition,
and both the precise statement
\begin{equation}
\label{eq:Zygmund 4-corr}
\Sc_{n}(4) = \Dc_{n}(4)
\end{equation}
(used for the variance computation below) and the bound
\begin{equation*}
|\Sc_{n}(l)| = O_{N\rightarrow\infty}(N^{l-2})
\end{equation*}
follow from Zygmund's elementary observation ~\cite{Zygmund}. For $ l=6 $, Bourgain (published in ~\cite{K-K-W}) improved Zygmund's bound to
\begin{equation*}
|\Sc_{n}(6)| = o_{N\rightarrow\infty}(N^{4});
\end{equation*}
this was improved ~\cite{BombieriBourgain} to
\begin{equation*}
|\Sc_{n}(6)| = O_{N\rightarrow\infty}(N^{7/2}),
\end{equation*}
valid  for {\em all} $n\in S_{2}$.

If one is willing to excise a thin sequence in $S_{2}$, then the more striking
estimate \cite{BombieriBourgain}
\begin{equation*}
|\Sc_{n}(6)| = |\Dc_{n}(6)| + O(N^{3-\gamma}),
\end{equation*}
with some $\gamma >0$,
is valid for a density $1$ sequence $S_{2}'\subseteq S_{2}$.
More generally \cite{Bourgain}, for every $l\ge 6$ even, there exists
a density $1$ sequence $S_{2}'(l)\subseteq S_{2}$ and a number $\gamma_{l}>0$ such that
\begin{equation}
\label{eq:corr diag dom l}
|\Sc_{n}(l)| = |\Dc_{n}(l)| + O(N^{l/2-\gamma_l})
\end{equation}
along $n\in S_{2}'(l)$.
A standard diagonal argument then yields the
existence of a density $1$ sequence $S_2'\subseteq S_{2}$ so that \eqref{eq:corr diag dom l} is valid for {\em all} even $l\ge 4$.

\begin{definition}
Given an even number $l=2k\ge 2$ we say that a sequence $S_{2}'\subseteq S_{2}$ satisfies the length-$l$
{\bf diagonal domination} assumption if there exists a number $\gamma=\gamma_l >0$ so that \eqref{eq:corr diag dom l} holds.
\end{definition}

\vspace{2mm}

For the $3$-dimensional case under the consideration of Theorem \ref{thm:Var3D} the analogous estimates to \eqref{eq:corr diag dom l} are required
to evaluate the relevant moments \eqref{eq:centred moments} of
$X_{f_{n},r}$. We define $\Sc_{3;n}$ and $\Dc_{3;n}$ analogously to
\eqref{eq:Sc correlations def} and \eqref{eq:Dc diag def} respectively, this time the $\lambda^{i}$ are lying on the $2$-sphere of radius $\sqrt{n}$.
Unlike the lattice points lying on circles, Zygmund's argument is not applicable for the $2$-sphere, so that an analogue of
\eqref{eq:Zygmund 4-corr} is not valid; luckily the asymptotic statement
\begin{equation}
\label{eq:3d 4-corr}
|\Sc_{3;n}(4)| = |\Dc_{3;n}(4)| + O\left( N^{7/4 +\epsilon}  \right),
\end{equation}
a key input to the variance computation in Theorem \ref{thm:Var3D},
was recently established ~\cite{BenatarMaffucci}. It was also shown in \cite{BenatarMaffucci} that the asymptotic diagonal domination for the higher length correlations sets does not hold in the $ 3 $-dimensional case.

\subsection{A more general version of Theorem \ref{thm:VarMain}, with explicit control over $S_{2}'$}
\label{sec:VarMainExpl}

\vspace{2mm}

We are interested in extending Theorem \ref{thm:VarMain} to a larger class of eigenfunctions. To this end, we introduce the following notation:

\begin{notation}
	
	Let $f_{n}$ be an eigenfunction on the $2$-torus corresponding to coefficients $(c_{\lambda})_{\lambda\in\Ec_{n}}$ via
	\eqref{eq:fn sum exp}, and $\underline{v}\in\R^{\mathcal{E}_n}\simeq \R^{N}$ as above.
	
	\begin{enumerate}
		
		\item
		
		Denote
		\begin{equation}
		\label{eq:A4Def}
		A_{4} = A_{4}(\underline{v}) = N\sum\limits_{\lambda\in\Ec_{n}} |c_{\lambda}|^{4} = N\cdot \|\underline{v}\|^{2}.
		\end{equation}
		
		\item Given $\lambda\in\Ec_{n}$ let $\lambda_{+}$ be the clockwise nearest neighbour of $\lambda$
		on $\sqrt{n}\mathcal{S}^{1}$, and
		\begin{equation}
		\label{eq:BasicVariablesV}
		V\left(\underline{v} \right):=
		N\sum_{\lambda\in\mathcal{E}_{n}}\left|\left|c_{\lambda_{+}}\right|^{2}-\left|c_{\lambda}\right|^{2}\right|.
		\end{equation}
		
		\item
		Let
		\begin{equation}
		\label{eq:alphaDef}
		\widetilde{V}(\underline{v}) = \frac{[\underline{v}]_{\infty} \cdot V(\underline{v})}{A_{4}(\underline{v})}.
		\end{equation}
		
	\end{enumerate}
	
\end{notation}

The following Lemma, proved in section \ref{sec:AuxLemmasProof},
summarizes some basic properties of the quantities in (\ref{eq:vnorm inf}),
(\ref{eq:A4Def}), (\ref{eq:BasicVariablesV}) and (\ref{eq:alphaDef}):
\begin{lemma}
	\label{lem:BasicVarProp}
	We have
	\begin{enumerate}
		\item $1\le A_{4} \le[\underline{v}]_{\infty}.$
		\item $[\underline{v}]_{\infty} \le1+V\left(\underline{v} \right)$.
		\item $V\left(\underline{v} \right) \le\widetilde{V}(\underline{v})\le V\left(\underline{v} \right)\left(1+V\left(\underline{v} \right)\right)$.
	\end{enumerate}
\end{lemma}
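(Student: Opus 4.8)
The plan is to unwind each of the three claims from the definitions \eqref{eq:vnorm inf}, \eqref{eq:A4Def}, \eqref{eq:BasicVariablesV}, \eqref{eq:alphaDef}, together with the normalization \eqref{eq:BasicNormalization}, i.e. $\sum_{\lambda\in\Ec_n}|c_\lambda|^2=1$, equivalently $\sum_\lambda v_\lambda=1$ where $\underline v=(v_\lambda)$, $v_\lambda=|c_\lambda|^2\ge 0$. Throughout write $M:=\max_\lambda v_\lambda$, so $[\underline v]_\infty=NM$ and $A_4=N\sum_\lambda v_\lambda^2$.

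For (1): the lower bound $A_4\ge 1$ is Cauchy--Schwarz (or power-mean) applied to $\sum_\lambda v_\lambda=1$ over $N$ indices, giving $\sum_\lambda v_\lambda^2\ge \frac1N(\sum_\lambda v_\lambda)^2=\frac1N$, hence $A_4\ge 1$. The upper bound is the crude estimate $\sum_\lambda v_\lambda^2\le M\sum_\lambda v_\lambda=M$, so $A_4=N\sum_\lambda v_\lambda^2\le NM=[\underline v]_\infty$.

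For (2): I would express $M$ via a telescoping argument around the circle. Order the lattice points cyclically $\lambda^{(1)},\dots,\lambda^{(N)}$ so that $\lambda^{(j+1)}=(\lambda^{(j)})_+$ (indices mod $N$), and let $j_0$ be an index where $v$ attains its minimum, $v_{\lambda^{(j_0)}}=m:=\min_\lambda v_\lambda$. Since $\sum_\lambda v_\lambda=1$ and there are $N$ terms, $m\le 1/N$. For any index $j$, walking from $j_0$ to $j$ along the cycle and telescoping gives $v_{\lambda^{(j)}}\le m+\sum_{i}\big|v_{(\lambda^{(i)})_+}-v_{\lambda^{(i)}}\big|\le m+\sum_{\lambda\in\Ec_n}\big|v_{\lambda_+}-v_\lambda\big|=m+V(\underline v)/N$. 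Taking $j$ to be the argmax and multiplying by $N$: $[\underline v]_\infty=NM\le Nm+V(\underline v)\le 1+V(\underline v)$. (One should double-check the telescoping bound uses only a single traversal, so no edge is counted twice; this is where a small amount of care is needed, but it is routine since the full sum $V$ dominates any partial sum of $|v_{\lambda_+}-v_\lambda|$ over a contiguous arc.)

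For (3): unwind $\widetilde V=[\underline v]_\infty V/A_4$. The lower bound $\widetilde V\ge V$ is immediate from $[\underline v]_\infty\ge A_4$, which is exactly part (1) (namely $A_4\le[\underline v]_\infty$). For the upper bound, combine part (1)'s $A_4\ge 1$ with part (2)'s $[\underline v]_\infty\le 1+V$: then $\widetilde V=[\underline v]_\infty V/A_4\le [\underline v]_\infty V\le (1+V)V=V(1+V)$, as claimed.

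I do not anticipate a genuine obstacle here; the only step requiring attention is the cyclic telescoping in part (2) — making sure the arc from the minimizing index to the maximizing index uses each nearest-neighbour difference at most once, so it is bounded by the full sum $V(\underline v)$ — after which everything is a short chain of inequalities feeding into one another (part (1) $\Rightarrow$ both bounds in part (3), part (2) $\Rightarrow$ the upper bound in part (3)).
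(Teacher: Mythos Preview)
Your proposal is correct and essentially matches the paper's proof. Parts (1) and (3) are identical to the paper's argument; for part (2) the paper uses Abel summation starting from an arbitrary $\lambda_0$ to write $1=\sum_\lambda|c_\lambda|^2=N|c_{\lambda_0}|^2+E$ with $|E|\le V(\underline v)$, whereas you telescope from the minimizing index and use $Nm\le 1$ --- this is a cosmetic variant of the same telescoping idea and is arguably slightly cleaner.
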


By \eqref{eq:BasicNormalization} we have that
\begin{equation}
\label{eq:A4<->theta}
A_{4} = \cos(\theta)^{-2},
\end{equation}
where $\theta = \theta_{f_n} = \theta(\underline{v},\underline{v_{0}})$ is the angle between $\underline{v}$ and the vector
$\underline{v_{0}}= (\frac{1}{N})_{\lambda\in\Ec_{n}}$ corresponding to Bourgain's eigenfunctions,
hence $ \theta $ reflects the proximity of
$f_{n}$ to Bourgain's eigenfunction; by the first part of Lemma \ref{lem:BasicVarProp}, the angle $\theta$ is restricted to the interval $ \left[0,\arccos \left(1/\sqrt{N}\right)\right] \subseteq [0,\pi/2) $.

\begin{definition}[Classes $\Fc_{1}(n;T(n),\eta(n))$ and $\Fc_{2}(n;T(n),\eta(n))$, $ d=2 $]
	Given a sequence $T(n)\rightarrow\infty$ and a sequence $\eta(n)>0$ we define:
	
	\begin{enumerate}
		
		\item A sequence
		$\{\Fc_{1}(n;T(n),\eta(n))\}_{n}$ of families of functions consisting for  $n\in S_2$ of all functions $f_{n}$ as in \eqref{eq:fn sum exp} satisfying
		\begin{equation}\label{eq:F_1_def}\Fc_{1}(n;T(n),\eta(n)) = \left\{f_{n}:\: \widetilde{V}(\underline{v})< \eta(n) \cdot \frac{T(n)}{\log T(n)} \right\}.\end{equation}
		
		\item
		
		A sequence $\{\Fc_{2}(n;T(n),\eta(n))\}_{n}$ of families of functions
		consisting for $n\in S_2$ of all functions $f_{n}$ as in \eqref{eq:fn sum exp} satisfying
		\begin{equation}\label{eq:F_2_def}\Fc_{2}(n;T(n),\eta(n)) = \left\{f_{n}:\: [\underline{v}]_{\infty} < T(n)^{\eta(n)} \right\},\end{equation}
		where we recall the notation \eqref{eq:vnorm inf} for $[\underline{v}]_{\infty}$.
		
	\end{enumerate}
	
\end{definition}

We are now in a position to state the generalized version of Theorem \ref{thm:VarMain}:

\begin{theorem}
\label{thm:VarMainGeneralized}
Let $r_{0}=r_{0}\left(n\right)=n^{-1/2}T_{0}\left(n\right)$ with
$T_{0}\left(n\right)\to\infty$, and $\eta(n)>0$ any vanishing sequence $\eta(n)\rightarrow 0$.

\begin{enumerate}
\item Fix a number $\epsilon>0$, and suppose that $ T_0(n) < \left(\log n\right)^{\frac{1}{2}\log\frac{\pi}{2}-\epsilon} $.
Then, if $S_{2}'\subseteq S_{2}$ is a sequence satisfying $ \mathcal{D}(n,\epsilon/2)$ for all $n\in S_{2}'$,
as $n\rightarrow\infty$ along $S_{2}'$, we have
\begin{equation}
\label{eq:var asympt d=2 precise}
\Vc\left(X_{f_{n},r}\right)\sim\frac{16}{3 \pi \cos^{2}\theta_{f_{n}}}r^{4}T^{-1}
\end{equation}
with $\theta_{f_{n}}$ as in \eqref{eq:A4<->theta},
uniformly for all $r_{0} < r <n^{-1/2}\left(\log n\right)^{\frac{1}{2}\log\frac{\pi}{2}-\epsilon}$ and $f_{n}\in\Fc_{1}(n;T(n),\eta(n))$,
where $T:=T(n)=n^{1/2}r.$

\item
		
Let $k\ge 3$ be an integer, $r_{1}=r_{1}(n)=n^{-1/2}T_{1}\left(n\right)$, and suppose further that the sequence of numbers $T_{1}(n)>T_{0}(n)$ satisfies $T_{1}(n)=O\left(N^{\xi}\right)$ for every $\xi>0$. Suppose that $S_{2}'\subseteq S_{2}$ is a sequence satisfying the length-$2k$ diagonal domination assumption and the hypothesis $\mathcal{D}(n,\epsilon)$ for all $n\in S_{2}'$. Then the $k$-th the moment of $\hat{X}_{f_{n},r}$ converges, as $n\rightarrow\infty$ along $S_{2}'$, to the standard Gaussian moment
\begin{equation*}
\E[\hat{X}_{f_{n},r}^{k}] \rightarrow \E[Z^{k}],
\end{equation*}
uniformly for $r_{0}<r<r_{1}$ and $f_{n}\in \Fc_{2}(n;T(n),\eta(n))$
where $Z\sim N(0,1)$ is the standard Gaussian variable.		
\end{enumerate}
\end{theorem}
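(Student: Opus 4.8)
The plan is to compute all centred moments \eqref{eq:centred moments} of $X_{f_n,r}$ by Fourier expansion and to extract their leading behaviour, which will turn out to be Gaussian. Writing $f_n(y)^2=\sum_{\lambda,\mu\in\Ec_n}c_\lambda c_\mu e(\langle y,\lambda+\mu\rangle)$ and integrating over $B_x(r)$ gives
\[
X_{f_n,r;x}=\sum_{\lambda,\mu\in\Ec_n}c_\lambda c_\mu\, K_r(\lambda+\mu)\, e(\langle x,\lambda+\mu\rangle),\qquad K_r(\xi):=\int\limits_{B_0(r)}e(\langle y,\xi\rangle)\,dy,
\]
where $K_r(0)=\vol(B_0(r))$ and, for $\xi\neq0$, $K_r(\xi)=\tfrac{r}{|\xi|}J_1(2\pi r|\xi|)$ is the usual Bessel expression. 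Since $\sum_\lambda c_\lambda c_{-\lambda}=\|f_n\|_{L^2}^2=1$, we get $\E[X_{f_n,r}]=\vol(B_0(r))$ and $X_{f_n,r;x}-\E[X_{f_n,r}]=\sum_{\lambda+\mu\neq0}c_\lambda c_\mu K_r(\lambda+\mu)e(\langle x,\lambda+\mu\rangle)$; integrating over $x\in\Tb^2$ and using orthogonality yields, for every $k$,
\[
\E\bigl[(X_{f_n,r}-\E[X_{f_n,r}])^k\bigr]=\sum_{\substack{(w_1,\dots,w_{2k})\in\Ec_n^{2k}\\ w_1+\dots+w_{2k}=0\\ w_{2i-1}+w_{2i}\neq0\ \forall i}}\ \prod_{j=1}^{2k}c_{w_j}\ \prod_{i=1}^{k}K_r(w_{2i-1}+w_{2i}),
\]
so the sum runs over the spectral correlation set $\Sc_n(2k)$ with the extra ``no vanishing pair'' restriction, weighted by Bessel factors. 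The geometric input is that if $\lambda,\mu\in\Ec_n$ are at angular distance $\alpha$ from antipodal then $|\lambda+\mu|\asymp\sqrt n\,\alpha$ and $2\pi r|\lambda+\mu|\asymp T\alpha$ with $T=n^{1/2}r$, so that $K_r(\lambda+\mu)\asymp r^2$ for $\alpha\lesssim 1/T$ and decays polynomially for $\alpha\gg 1/T$: the Bessel weight concentrates on nearly antipodal pairs at angular scale $1/T$.

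\emph{Part (1).} By Zygmund's relation $\Sc_n(4)=\Dc_n(4)$ (valid for all $n$), the $k=2$ sum reduces, after discarding the $O([\underline{v}]_\infty/N)$ relative contribution of degenerate tuples, to $\Vc(X_{f_n,r})=2\sum_{\lambda+\mu\neq0}|c_\lambda|^2|c_\mu|^2 K_r(\lambda+\mu)^2\,(1+o(1))$. Substituting $\mu=-\lambda'$ turns this into a sum over pairs of angularly close lattice points against the concentrated kernel $h_T(\delta)=\tfrac{r^2}{4n\sin^2(\delta/2)}J_1\bigl(4\pi T|\sin(\delta/2)|\bigr)^2$. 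Two approximations then follow. First, \emph{flatness}: inside the $1/T$-window around each $\lambda$ one replaces $|c_{\lambda'}|^2$ by $|c_\lambda|^2$, the total error being controlled by $V(\underline{v})$ and $[\underline{v}]_\infty$; this is precisely where the hypothesis $\widetilde V(\underline{v})<\eta(n)\,T/\log T$ (i.e. $f_n\in\Fc_1$) and the accompanying logarithmic loss enter. Second, \emph{angular equidistribution}: using $\mathcal{D}(n,\epsilon/2)$ one replaces $\sum_{\lambda'}h_T(\delta_{\lambda,\lambda'})$ by $\tfrac{N}{2\pi}\int_0^{2\pi}h_T$, which is legitimate exactly when $\Delta(n)\cdot T=o(1)$, forcing the radius range $r<n^{-1/2}(\log n)^{\frac12\log\frac{\pi}{2}-\epsilon}$. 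What remains is $\Vc(X_{f_n,r})\sim\tfrac1T\bigl(N\sum_\lambda|c_\lambda|^4\bigr)\cdot\tfrac1{2\pi}\int_0^{2\pi}h_T$, and evaluating $\int_0^\infty J_1(s)^2 s^{-2}\,ds=\tfrac{4}{3\pi}$ together with $N\sum_\lambda|c_\lambda|^4=A_4=\cos^{-2}\theta_{f_n}$ delivers the constant $\tfrac{16}{3\pi\cos^2\theta_{f_n}}$, uniformly in the stated ranges.

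\emph{Part (2).} Split $\Sc_n(2k)=\Dc_n(2k)\sqcup\bigl(\Sc_n(2k)\setminus\Dc_n(2k)\bigr)$. The off-diagonal part has $O(N^{k-\gamma})$ tuples by the length-$2k$ diagonal domination hypothesis; bounding $\prod_j|c_{w_j}|\le([\underline{v}]_\infty/N)^k$ and $\prod_i|K_r(\cdot)|\le(\pi r^2)^k$, its total is $O\bigl(N^{-\gamma}[\underline{v}]_\infty^k r^{2k}\bigr)=o(\Vc^{k/2})$, since $[\underline{v}]_\infty<T^{\eta(n)}$ and $T\le T_1(n)=N^{o(1)}$ give $[\underline{v}]_\infty^k r^{2k}/\Vc^{k/2}=N^{o(1)}=o(N^{\gamma})$ (here one uses $\Vc(X_{f_n,r})\asymp r^4T^{-1}$ up to $T^{\pm o(1)}$, which holds under $\Fc_2$ by the $k=2$ analysis). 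For the diagonal part, a tuple in $\Dc_n(2k)$ is a matching of $w_1,\dots,w_{2k}$ into antipodal pairs, and the ``no vanishing pair'' restriction forbids matching any $\{2i-1,2i\}$; such a matching induces a $2$-regular multigraph on the $k$ ``copies'', i.e. a disjoint union of cycles. Tracking the Bessel concentration around a cycle of length $m$, its $m$ free vectors are forced into a single $O(m/T)$-arc about a common direction, so (counting lattice points in $1/T$-arcs via $\mathcal{D}(n,\epsilon)$) that cycle contributes $O\bigl([\underline{v}]_\infty^{m} r^{2m} T^{1-m}\bigr)$; relative to $\Vc^{m/2}\asymp r^{2m}T^{-m/2+o(1)}$ this is a factor $T^{\,m\eta(n)+1-m/2+o(1)}\to0$ whenever $m\ge3$. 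Hence any decomposition containing a cycle of length $\ge3$ is negligible, and for $k$ odd no decomposition into $2$-cycles exists, so $\E[(X_{f_n,r}-\E[X_{f_n,r}])^k]=o(\Vc^{k/2})=\E[Z^k]\Vc^{k/2}(1+o(1))$. For $k$ even, each of the $(k-1)!!$ perfect matchings of the $k$ copies into pairs contributes, through its two internal pairings, exactly $2\sum_{\lambda+\mu\neq0}|c_\lambda|^2|c_\mu|^2 K_r(\lambda+\mu)^2=\Vc(X_{f_n,r})(1+o(1))$ per pair, hence $(k-1)!!\,\Vc(X_{f_n,r})^{k/2}(1+o(1))$ in total; after the routine inclusion–exclusion for coinciding free vectors and for overlaps between distinct matchings (all lower order, as in Part (1)), this gives $\E[\hat{X}_{f_n,r}^k]\to(k-1)!!=\E[Z^k]$, uniformly for $f_n\in\Fc_2$ and $r_0<r<r_1$.

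\emph{Main obstacle.} The delicate point in Part (2) is the uniform control of the cycle sums: one must simultaneously (i) count tuples of lattice points with prescribed near-antipodality constraints --- which needs the quantitative equidistribution $\mathcal{D}(n,\epsilon)$ and the fact that angular windows of width $\asymp 1/T$ carry $\asymp N/T$ points, uniformly in $n\in S_2'$, in $r$, and in $f_n$; (ii) use the Bessel decay to show that every cycle of length $\ge3$ loses a genuine power of $T$; and (iii) absorb the coefficient fluctuations (via $[\underline{v}]_\infty$ and the $\Fc_2$ bound) together with the combinatorial over-counting into $o(1)$. In Part (1) the corresponding difficulty is the precise calibration of the two competing error scales --- coefficient flatness ($\widetilde V(\underline{v})$ versus $T/\log T$) and angular discrepancy ($\Delta(n)$ versus $1/T$) --- which is exactly what dictates both the class $\Fc_1$ and the admissible range of radii.
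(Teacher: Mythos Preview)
Your outline matches the paper's proof closely. In Part~(1) the paper packages your ``flatness'' and ``equidistribution'' steps into a single asymptotic for the weighted pair-counting function $F(s)=\sum_{0<\|\widehat\lambda-\widehat{\lambda'}\|\le s}|c_\lambda|^2|c_{\lambda'}|^2$ (Proposition~\ref{prop:DiscreteProp}), obtained by Abel summation, and then evaluates $\Vc=8\pi^2 r^4\int_0^2 h_2(Ts)\,dF(s)$ via two integrations by parts; this is exactly your two-step heuristic made precise. In Part~(2) the paper's cycle bound is the iterated estimate $\tfrac1N\sum_{\lambda}|g_2(r\|\lambda-\lambda_0\|)|=O(T^{-1})$ (equation~\eqref{eq:abs_g2_bound}), applied along each cycle after dropping the closing edge, rather than your geometric arc-counting; both yield the same $T^{1-\ell}$ per $\ell$-cycle and hence $S_{\mathcal L}\ll[\underline v]_\infty^k T^{-k+m}$.

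One genuine point to fix: in Part~(2) you assert $\Vc(X_{f_n,r})\asymp r^4 T^{-1\pm o(1)}$ under $\Fc_2$ ``by the $k=2$ analysis'', but Part~(1)'s asymptotic requires $f_n\in\Fc_1$, and $\Fc_2$ does \emph{not} imply $\Fc_1$ --- the bound $[\underline v]_\infty<T^{\eta(n)}$ says nothing about $V(\underline v)$, which can be large for oscillating coefficients. Only the \emph{lower} bound $\Vc\gg r^4 T^{-1-2\eta(n)}$ is actually needed to normalise the moments, and the paper supplies it separately (Lemma~\ref{lem:Lower_Bound_F2}) by a pigeonhole argument independent of Part~(1): under $\Fc_2$ the set $R=\{\lambda:|c_\lambda|^2\ge 1/(2N)\}$ satisfies $|R|\gg N T^{-\eta(n)}$, and an elementary box-count (Lemma~\ref{lem:LowerBoundPairs}) gives $\gg|R|^2/T$ pairs in $R$ at angular distance $\le c/T$, each contributing $\gg r^4/N^2$ to the variance sum. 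You should replace your appeal to ``the $k=2$ analysis'' with this short direct argument.
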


Section \ref{eq:examples varying theta} exhibits a few scenarios when Theorem \ref{thm:VarMainGeneralized} is applicable;
as in these the true asymptotic behaviour of the variance \eqref{eq:var asympt d=2 precise} genuinely varies together with $\theta_{f_{n}}$,
this demonstrates that $\theta_{f_{n}}$ (and hence $A_{4}$) is the proper flatness measure of $f_{n}$, see also examples \ref{ex:Bourgain} and \ref{ex:flat vs nonflat}.

\vspace{2mm}


\begin{corollary}

In the setting of Theorem \ref{thm:VarMainGeneralized} part (2), the distribution of the random variables $\{\hat{X}_{f_{n},r}\}$ converges uniformly to the standard Gaussian distribution: as $n\rightarrow\infty$ along $S_{2}'$
	\begin{equation*}
	\meas\{ x\in\mathbb{T}^2 :\: \hat{X}_{f_{n},r;x} \le t\} \rightarrow
	\frac{1}{\sqrt{2\pi}}\int\limits_{-\infty}^{t}e^{-z^{2}/2}dz,
	\end{equation*}
	uniformly for $t\in\R$, $r_{0}<r<r_{1}$, and
	$f_{n}\in \Fc_{2}(n;T(n),\eta(n))$.
\end{corollary}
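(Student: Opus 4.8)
The plan is to deduce the stated corollary from part (2) of Theorem~\ref{thm:VarMainGeneralized} exactly as Corollary~\ref{cor:CLT_result} is deduced from Theorem~\ref{thm:VarMain} part~(2), i.e.\ via the method of moments. First I would recall that Theorem~\ref{thm:VarMainGeneralized}(2) asserts that for every fixed integer $k\ge 3$ the standardized $k$-th moment $\E[\hat{X}_{f_{n},r}^{k}]$ converges to the Gaussian moment $\E[Z^{k}]$, uniformly over $r_{0}<r<r_{1}$ and $f_{n}\in\Fc_{2}(n;T(n),\eta(n))$, as $n\to\infty$ along $S_{2}'$. For $k=1$ the standardized first moment is identically $0=\E[Z]$, and for $k=2$ it is identically $1=\E[Z^{2}]$, by the very definition \eqref{eq:standardizedX} of $\hat{X}_{f_{n},r}$. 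Hence \emph{all} moments of $\hat{X}_{f_{n},r}$ converge to those of the standard Gaussian, uniformly in the indicated parameters.

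Next I would invoke the standard fact (the Fr\'echet--Shohat / method-of-moments theorem) that, since the standard normal distribution is determined by its moments (its moment generating function is finite everywhere), convergence of all moments of a sequence of random variables to the Gaussian moments implies convergence in distribution to $N(0,1)$, and moreover, because the limiting cumulative distribution function $\Phi(t)=\frac{1}{\sqrt{2\pi}}\int_{-\infty}^{t}e^{-z^{2}/2}\,dz$ is continuous on all of $\R$, this convergence in distribution is automatically uniform in $t\in\R$ (a Polya-type argument: pointwise convergence of CDFs to a continuous CDF on a compact-after-one-point-compactification line is uniform). This is precisely the content of \cite[§XVI.3 Lemma 2]{Feller} cited in the paragraph preceding Corollary~\ref{cor:CLT_result}. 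Applying it here gives, for each fixed $f_{n}$ and $r$, that $\meas\{x\in\Tb^{2}:\hat{X}_{f_{n},r;x}\le t\}\to\Phi(t)$ uniformly in $t$.

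The one genuinely substantive point — and the step I expect to require the most care — is upgrading this to \emph{uniformity over $r_{0}<r<r_{1}$ and $f_{n}\in\Fc_{2}(n;T(n),\eta(n))$}, rather than merely for each fixed pair $(f_{n},r)$. The resolution is that the quantitative form of the method-of-moments bound is itself uniform: for any $t$ and any $K$, the discrepancy $\bigl|\meas\{\hat{X}_{f_{n},r;x}\le t\}-\Phi(t)\bigr|$ can be bounded, via (say) a smooth approximation of the indicator $\mathbf{1}_{(-\infty,t]}$ by polynomials of bounded degree together with Markov's inequality to control the tails using a fixed higher moment, by a quantity depending only on $\max_{k\le K}\bigl|\E[\hat{X}_{f_{n},r}^{k}]-\E[Z^{k}]\bigr|$ and on $K$; since Theorem~\ref{thm:VarMainGeneralized}(2) provides, for each $k$, a bound on this discrepancy that is \emph{uniform} in $r\in(r_{0},r_{1})$ and $f_{n}\in\Fc_{2}$, one obtains a bound on $\sup_{t}\bigl|\meas\{\hat{X}_{f_{n},r;x}\le t\}-\Phi(t)\bigr|$ that is uniform over all admissible $(f_{n},r)$ and tends to $0$ as $n\to\infty$ along $S_{2}'$. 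Concretely I would: (i) fix $\delta>0$; (ii) choose $K=K(\delta)$ large enough that the Gaussian tail beyond $\pm K^{1/2}$-type thresholds, controlled by the (uniformly bounded) $K$-th moments, contributes less than $\delta/3$; (iii) use Weierstrass/Bernstein approximation to replace the indicator by a polynomial of degree $\le K$ up to an error $\delta/3$; (iv) apply Theorem~\ref{thm:VarMainGeneralized}(2) to each of the finitely many moments $k\le K$ to see that the polynomial expectations converge uniformly, giving the last $\delta/3$ for $n$ large. This yields the claimed uniform CDF convergence and completes the proof.
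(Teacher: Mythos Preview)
Your proposal is correct and matches the paper's approach: the paper gives no detailed proof of this corollary, simply citing \cite[§XVI.3 Lemma 2]{Feller} (in the paragraph preceding Corollary~\ref{cor:CLT_result}) to pass from uniform moment convergence to a uniform CLT, and the present corollary is the verbatim analogue for the generalized setting. Your additional elaboration on why the convergence is uniform over $r$ and $f_{n}$ is more explicit than anything in the paper, but is in the same spirit and is sound.
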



\subsection{Some examples of application of Theorem \ref{thm:VarMainGeneralized}}
\label{eq:examples varying theta}

\begin{example}
\label{ex:Bourgain}
Let $ f_n $ be Bourgain's eigenfunction, so that $ [\underline{v}]_{\infty} =A_4 =1 $ and $ V\left(\underline{v} \right) = \widetilde{V}(\underline{v}) =0 $. For every $ \eta(n)>0,T(n)>1 $ we have
\[ \Bc_{n} \subseteq \Fc_{1}(n;T(n),\eta(n)) \cap \Fc_{2}(n;T(n),\eta(n)). \] Hence Theorem \ref{thm:VarMainGeneralized} implies Theorem \ref{thm:VarMain}.
\end{example}

The following example exhibits a scenario  when an application of Theorem \ref{thm:VarMainGeneralized} yields
a Central Limit Theorem for $X_{f_{n},r}$, corresponding to asymptotic behaviour of the respective variance $\Vc(X_{f_{n},r})$ which is very different from the behaviour in Theorem \ref{thm:VarMain}.

\begin{example}
\label{ex:flat vs nonflat}

Let $\epsilon>0$, $r_{0}$, and $ T_0(n) $ as in Theorem \ref{thm:VarMainGeneralized}, and $r_{1}= r_1(n)=n^{-1/2} T_{1}(n) > r_{0}$
with $T_{1}(n) \le (\log{n})^{\frac{1}{2}\log{\frac{\pi}{2}}-\epsilon}$. There exists
a density $1$ sequence $S_{2}'\subseteq S_{2}$ so that the following holds.
Let $t=t(n)\in (0,1)$ be a number satisfying $t(n) \gg \frac{1}{T_{0}(n)^{\xi}}$ for every $\xi>0$,
such that $N\cdot t$ is an integer. We choose an ordering $\lambda^{1},\lambda^{2},\ldots \lambda^{N}\in\Ec_{n}$
such that for every $1\le i \le N-1$ we have that $\lambda^{i+1}$ is the (clockwise) nearest neighbour
$\lambda^{i+1}=\lambda^{i}_{+}$, and set
\[
\left(\left|c_{\lambda^{1}}\right|^{2},,\dots,\left|c_{\lambda^{N}}\right|^{2}\right)=(\underset{\begin{subarray}{c}
Nt\end{subarray}\text{\,times}}{\underbrace{\left(Nt\right)^{-1},\dots\dots,\left(Nt\right)^{-1}}},0\dots,0).
\]
Then
\begin{equation}
\label{eq:var asympt nonflat}
\Vc(X_{f_{n},r}) \sim \frac{16}{3 \pi}r^{4}t^{-1}T^{-1},
\end{equation}
uniformly for $r_{0}<r=n^{-1/2}T<r_{1}$, and $f_{n}$ with coefficients $c_{\lambda}$ as above.
If, in addition, we have $T_{1}(n)=O(N^{\xi})$ for every $\xi>0$, then the distribution of the standardised random variable
$\hat{X}_{f_{n},r}$ converges to standard Gaussian uniformly.

\end{example}

Comparing \eqref{eq:var asympt d=2 Bourgain} to \eqref{eq:var asympt nonflat}
we observe that the asymptotic behaviour of the variance for the flat and the non-flat functions
respectively is genuinely different, provided that we choose $t(n)\rightarrow 0$;
we infer that the proposed flatness measure is the natural choice for this problem.
One can also generalise Theorem \ref{thm:VarMain} as follows:

\begin{corollary}
\label{cor:VarAsympGen}

Let $\epsilon$, $r_{0}$, $T_{0}(n)$, $r_{1}$ and $T_{1}(n)$ be as in Theorem \ref{thm:VarMainGeneralized},
and $g:\Sc^{1}\rightarrow\R$ a non-negative function of bounded variation such that $ \|g\|_{L^{1}(\Sc^{1})}=1$. For $n\in S_{2}$ and $\lambda\in \Ec_{n}$ we set
$|\widetilde{c_{\lambda}}|^{2} := g(\lambda/\sqrt{n})$,
and normalise the vector $\widetilde{\underline{v}}:=(|\widetilde{c_{\lambda}}|^{2})_{\lambda\in\Ec_{n}}$ by
setting $\underline{v} := \frac{\widetilde{\underline{v}}}{\|\widetilde{\underline{v}}\|_{1}}$, i.e.
\begin{equation}
\label{eq:v BV norm}
v:=(|{c_{\lambda}}|^{2})_{\lambda\in\Ec_{n}} =
\left(\frac{|\widetilde{c_{\lambda}}|^{2}}{\sum\limits_{\mu\in\Ec_{n}}|\widetilde{c_{\mu}}|^{2}}\right)_{\lambda\in\Ec_{n}}.
\end{equation}

Then along a generic sequence $S_{2}'\subseteq S_{2}$ we have
\begin{equation*}
\Vc(X_{f_{n},r}) \sim \frac{16}{3 \pi}\|g\|_{L^{2}(\Sc^{1})}^{2}r^{4}T^{-1},
\end{equation*}
uniformly for
$r_{0}<r=n^{-1/2}T<r_{1}$, and $f_{n}$ with coefficients $c_{\lambda}$ as in \eqref{eq:v BV norm}.
If, in addition, we have $T_{1}(n)=O(N^{\xi})$ for every $\xi>0$, then the distribution of the standardised random variable
$\hat{X}_{f_{n},r}$ converges to standard Gaussian.
\end{corollary}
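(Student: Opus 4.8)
\textbf{Proof plan for Corollary \ref{cor:VarAsympGen}.} The strategy is to reduce the statement to a direct application of Theorem \ref{thm:VarMainGeneralized} (part (1) for the variance asymptotics, part (2) for the CLT), by showing that the eigenfunctions $f_n$ with coefficients given by \eqref{eq:v BV norm} belong to the relevant classes $\Fc_1(n;T(n),\eta(n))$ and $\Fc_2(n;T(n),\eta(n))$ for a suitable vanishing sequence $\eta(n)$, and that the resulting angle $\theta_{f_n}$ satisfies $\cos^2\theta_{f_n} \to \|g\|_{L^2(\Sc^1)}^{-2}\cdot\|g\|_{L^1(\Sc^1)}^2 = \|g\|_{L^2(\Sc^1)}^{-2}$ along the generic sequence. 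First I would record that, by \eqref{eq:A4<->theta} and \eqref{eq:A4Def}, one has $\cos^2\theta_{f_n} = 1/A_4(\underline v) = \|\widetilde{\underline v}\|_1^2/(N\|\widetilde{\underline v}\|_2^2)$. Since $g$ is of bounded variation, it is in particular bounded and Riemann integrable, so along any sequence $S_2'$ on which the angles $\{\phi_j\}$ equidistribute mod $2\pi$ (e.g. any sequence satisfying $\mathcal D(n,\epsilon)$, or just the generic $S_2'$ of section \ref{sec:ang distr}) we have the Riemann-sum convergence
\begin{equation*}
\frac{1}{N}\sum_{\lambda\in\Ec_n} g(\lambda/\sqrt n) \longrightarrow \int_{\Sc^1} g\,=\,\|g\|_{L^1(\Sc^1)} = 1, \qquad
\frac{1}{N}\sum_{\lambda\in\Ec_n} g(\lambda/\sqrt n)^2 \longrightarrow \int_{\Sc^1} g^2 = \|g\|_{L^2(\Sc^1)}^2,
\end{equation*}
whence $A_4(\underline v) \to \|g\|_{L^2(\Sc^1)}^2$, i.e.\ $\cos^2\theta_{f_n}\to\|g\|_{L^2(\Sc^1)}^{-2}$; plugging this into \eqref{eq:var asympt d=2 precise} gives exactly the claimed asymptotic $\Vc(X_{f_n,r}) \sim \frac{16}{3\pi}\|g\|_{L^2(\Sc^1)}^2 r^4 T^{-1}$. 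For equidistribution to make sense of the Riemann sums, note the minor subtlety that $g$ need not be continuous, but bounded-variation (hence Riemann integrable) functions are exactly the class for which Weyl-equidistribution of the sample points forces Riemann-sum convergence; alternatively one invokes the explicit discrepancy bound $\mathcal D(n,\epsilon)$ together with the total variation of $g$ via Koksma's inequality to get a quantitative rate.

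The second and more delicate point is the membership in the $\Fc_i$ classes, i.e.\ controlling $\widetilde V(\underline v)$ and $[\underline v]_\infty$. For $[\underline v]_\infty = N\max_\lambda|c_\lambda|^2 = \|g\|_\infty/\left(\frac1N\sum_\lambda g(\lambda/\sqrt n)\right)$, the denominator tends to $1$, so $[\underline v]_\infty \to \|g\|_\infty/\|g\|_{L^1}$, a bounded quantity; hence for $n$ large $[\underline v]_\infty < T(n)^{\eta(n)}$ holds trivially (since $T(n)\ge T_0(n)\to\infty$ and $\eta(n)>0$ fixed at each stage), placing $f_n\in\Fc_2(n;T(n),\eta(n))$. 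For $\Fc_1$ one needs $\widetilde V(\underline v) = [\underline v]_\infty V(\underline v)/A_4(\underline v) < \eta(n)\,T(n)/\log T(n)$; since $[\underline v]_\infty$ and $A_4$ are bounded above and below (the latter by $A_4\ge1$ from Lemma \ref{lem:BasicVarProp}), it suffices to bound $V(\underline v) = N\sum_\lambda\big||c_{\lambda_+}|^2 - |c_\lambda|^2\big|$. Up to the normalising factor $\big(\frac1N\sum g(\lambda/\sqrt n)\big)^{-1}\to1$, this is $\sum_\lambda |g(\lambda_+/\sqrt n) - g(\lambda/\sqrt n)|$, which is a telescoping-type sum along the cyclically ordered angles and is therefore bounded by the \emph{total variation} $\mathrm{Var}(g)$ of $g$ on $\Sc^1$ — a fixed finite constant, independent of $n$. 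Hence $V(\underline v) = O(1)$, so $\widetilde V(\underline v) = O(1)$ as well, and again for $n$ large enough the inequality $\widetilde V(\underline v) < \eta(n)T(n)/\log T(n)$ holds because the right-hand side diverges (as $T(n)\ge T_0(n)\to\infty$, while $\eta(n)$ is a fixed positive number at the stage where Theorem \ref{thm:VarMainGeneralized} part (1) is applied; more carefully, one first fixes $\eta>0$, applies the theorem, then lets $\eta\to0$, or equivalently one notes $\eta(n)T(n)/\log T(n)\to\infty$ whenever $\eta(n)\to0$ slowly enough relative to $T_0(n)$, which can be arranged by the diagonalisation producing $S_2'$).

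With both memberships established, Theorem \ref{thm:VarMainGeneralized} part (1) gives the variance asymptotics uniformly in $r_0 < r < r_1$, and — provided the extra hypothesis $T_1(n) = O(N^\xi)$ for every $\xi>0$ holds — part (2) gives $\E[\hat X_{f_n,r}^k]\to\E[Z^k]$ for every $k\ge3$, uniformly in the stated range, whence by the Feller moment-to-distribution criterion cited after Theorem \ref{thm:VarMain} the standardised variables $\hat X_{f_n,r}$ converge in distribution (uniformly) to the standard Gaussian. The main obstacle — and really the only non-formal step — is the bounded-variation bookkeeping of the previous paragraph: carefully verifying that $V(\underline v)$ is controlled by $\mathrm{Var}(g)$ uniformly in $n$ (this uses that the map $\lambda\mapsto\lambda_+$ realises the cyclic order on $\Ec_n$ compatibly with the cyclic order on $\Sc^1$, so the sum of increments $\big|g(\lambda_+/\sqrt n)-g(\lambda/\sqrt n)\big|$ never exceeds the variation of $g$ over one full loop), and that the normalisation $\|\widetilde{\underline v}\|_1 = \sum_\lambda g(\lambda/\sqrt n)$ neither vanishes nor blows up — which is exactly where the generic angular equidistribution and the hypothesis $\|g\|_{L^1(\Sc^1)}=1$ with $g\ge0$ (so the limit is a strictly positive constant) enter. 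Everything else is a direct substitution into Theorem \ref{thm:VarMainGeneralized}. Example \ref{ex:flat vs nonflat} is then recovered as the special (degenerate, as it is not of bounded variation uniformly) case $g = t^{-1}\mathbf 1_{\text{arc of measure }t}$, modulo the caveat that there $t=t(n)$ is allowed to vary with $n$, which is why that example is stated separately rather than subsumed here.
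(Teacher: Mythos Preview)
Your proposal is correct and follows essentially the same route as the paper's proof: the paper invokes Koksma's inequality to get $A_4(\underline v)\sim\|g\|_2^2$, notes $V(\underline v)\ll V(g)$ (the total variation of $g$ on $\Sc^1$), and then appeals to Lemma \ref{lem:BasicVarProp} and Theorem \ref{thm:VarMainGeneralized}---exactly the ingredients you assemble, only you spell out the bounded-variation bookkeeping and the $\Fc_1,\Fc_2$ memberships in more detail. One small imprecision: $[\underline v]_\infty$ is \emph{bounded by} $\|g\|_\infty/\|g\|_{L^1}+o(1)$ rather than necessarily convergent to it (since $\max_\lambda g(\lambda/\sqrt n)$ need not attain $\|g\|_\infty$), but this does not affect your argument, which only uses boundedness.
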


\begin{proof}
By Koksma's inequality (see e.g. \cite{KuipersNiederreiter}),
$A_{4}\left(\underline{v} \right)\sim\left\Vert g\right\Vert _{2}^{2}$
along a density one sequence in $S_{2}$. Also note that $$V(\underline{v}) \ll V\left(g\right),$$
with the l.h.s. as in \eqref{eq:BasicVariablesV}, and r.h.s. the variation of $g$ on $\Sc^{1}$.
In light of Lemma \ref{lem:BasicVarProp}, both parts of Corollary \ref{cor:VarAsympGen} follow from Theorem \ref{thm:VarMainGeneralized}.
\end{proof}

\section{Notation}
For the convenience of the reader, we summarize here the notation used in our paper.\linebreak

$ S_d=\{n=a_{1}^{2}+\ldots +a_{d}^{2}:\: a_{1},\ldots,a_{d}\in\Z \} $: the set of integers expressible as a sum of $ d $ squares, see \eqref{eq:S_d}.\\

$\Ec_{n} = \Ec_{d;n}=\{\lambda\in\Z^{d}:\: \|\lambda\|^{2}=n\}$:  the standard lattice points lying on the $(d-1)$-dimensional sphere (a circle for $ d=2 $) of radius-$\sqrt{n}$, see \eqref{eq:E_n}.\\

$f_{n}\left(x\right)=\sum\limits_{\lambda\in\mathcal{E}_{n}}c_{\lambda}e\left(\left\langle x,\lambda\right\rangle \right)$: the toral Laplace eigenfunctions, see \eqref{eq:fn sum exp}.\\

$ N=N_{d;n}=\#\mathcal{E}_n$: the number of lattice points on the $(d-1)$-dimensional sphere (a circle for $ d=2 $) of radius-$\sqrt{n}$, see \eqref{eq:N}.\\

$B_{x}(r)$: the radius $r$ geodesic ball in $\mathbb{T}^d$ centred at $x$.\\

$X_{f_{n},r}=X_{f_{n},r;x}= \int\limits_{B_{x}(r)}f_{n}(y)^{2}dy$: the $L^{2}$-mass of $ f_n $ restricted to $ B_{x}(r) $, where $ x $ is drawn randomly uniformly in $ \mathbb{T}^d $, see \eqref{eq:X_RV}.\\

$\E[X_{f_{n},r}] = \int\limits_{\Tb^{d}}X_{f_{n},r;x}dx$: the expected value of $X_{f_{n},r} $, see \eqref{eq:Expectation}.\\

$\Vc(X_{f_{n},r}) = \E[(X_{f_{n},r}-\E[X_{f_{n},r}])^{2}]$: the variance of  $X_{f_{n},r} $, see \eqref{eq:Variance}.\\

$\hat{X}_{f_{n},r}:=\frac{X_{f_{n},r}-\E[X_{f_{n},r}]}{\sqrt{\Vc\left(X_{f_{n},r}\right)}}$: the standardized random  $L^{2}$-mass of $ f_n $, see \eqref{eq:standardizedX}.\\

$ T = n^{1/2}r $.\\

$\underline{v}=(|c_{\lambda}|^{2})_{\lambda \in\Ec_{n}} \in \R^{\mathcal{E}_n} $:
the vector of the squared absolute values of the coefficients of $f_n $, see \eqref{eq:v_def}.\\

$[\underline{v}]_{\infty} = N \cdot \max\limits_{\lambda\in\Ec_{n}}|c_{\lambda}|^{2}$: the normalised $\ell_{\infty}$-norm of $ \underline{v} $, see \eqref{eq:vnorm inf}.\\

$\mathcal{B}_n$: the class of Bourgain's eigenfunctions $f_{n}\left(x\right)=\frac{1}{\sqrt{N}}\sum\limits_{\lambda\in\mathcal{E}_{n}}\varepsilon_{\lambda}e\left(\left\langle x,\lambda\right\rangle \right)$,
where $\varepsilon_{\lambda}=\pm1$ for every $\lambda\in\mathcal{E}_{n}$, see \eqref{eq:BourgainEF}.\\

$\Uc_{n;\epsilon}$: the class of $ \epsilon $-ultraflat functions, where $ [\underline{v}]_{\infty} \le N^{\epsilon} $, see \eqref{eq:ultra_flat_cond}.\\

$A_{4} = A_{4}(\underline{v}) = N\sum\limits_{\lambda\in\Ec_{n}} |c_{\lambda}|^{4} = N\cdot \|\underline{v}\|^{2}$, see \eqref{eq:A4Def}.\\

$\theta = \theta_{f_n} = \theta(\underline{v},\underline{v_{0}})$: the angle between $\underline{v}$ and the vector
$\underline{v_{0}}= (\frac{1}{N})_{\lambda\in\Ec_{n}}$ corresponding to Bourgain's eigenfunctions, see \eqref{eq:A4<->theta}.\\

$V\left(\underline{v} \right)=
N\sum\limits_{\lambda\in\mathcal{E}_{n}}\left|\left|c_{\lambda_{+}}\right|^{2}-\left|c_{\lambda}\right|^{2}\right|$, where $\lambda_{+}$ is the clockwise nearest neighbour of $\lambda$
on $\sqrt{n}\mathcal{S}^{1}$, see \eqref{eq:BasicVariablesV}.\\

$\widetilde{V}(\underline{v}) = \frac{[\underline{v}]_{\infty} \cdot V(\underline{v})}{A_{4}(\underline{v})}$, see \eqref{eq:alphaDef}.\\

$\Fc_{1}(n;T(n),\eta(n)) = \left\{f_{n}:\: \widetilde{V}(\underline{v})< \eta(n) \cdot \frac{T(n)}{\log T(n)} \right\}$, see \eqref{eq:F_1_def}.\\

$\Fc_{2}(n;T(n),\eta(n)) = \left\{f_{n}:\: [\underline{v}]_{\infty} < T(n)^{\eta(n)} \right\}$, see \eqref{eq:F_2_def}.\\

$\widehat{\lambda}=\lambda/\sqrt{n}$: the projection of $\lambda \in \mathcal{E}_n$ onto $\mathcal{S}^{d-1}.$\\

$\Delta\left(n\right)=\sup\limits_{0\le a\le b\le2\pi}\left|\frac{1}{N}\cdot \#\left\{ 1\le j\le N:\,\phi_{j}\in\left[a,b\right]\,\text{mod\,}2\pi\right\} -\frac{\left(b-a\right)}{2\pi}\right|$: the discrepancy of the angles $ \phi_j $ corresponding to the lattice points $ \Ec_{2;n} $, see \eqref{eq:Discrepancy_2d}.\\

Hypothesis $ \mathcal{D}(n,\epsilon) $ holds if $\Delta\left(n\right)\le \left(\log n\right)^{-\frac{1}{2}\log\frac{\pi}{2}+\epsilon}$, see \eqref{eq:D_n_epsilon}.\\

$\Delta_{3}\left(n\right)=\sup\limits_{\begin{subarray}{c}
	x\in\mathcal{S}^{2}\\
	0<r\le2
	\end{subarray}}\left|\frac{1}{N} \cdot \#\left\{ \lambda\in\mathcal{E}_{3;n}:\,\left|\widehat{\lambda}-x\right|\le r\right\} -\frac{r^{2}}{4}\right|$: the spherical cap discrepancy of the points $ \mathcal{E}_{3;n} $, see \eqref{eq:Discrepancy_3d}.\\

$\Sc_{n}(l) = \left\{ (\lambda^{1},\ldots,\lambda^{l})\in(\Ec_{n})^{l}:\: \sum\limits_{i=1}^{l}\lambda^{i}=0  \right\}$: the length-$ l $ spectral correlation set, see \eqref{eq:Sc correlations def}.\\

$\Dc_{n}(l) = \left\{ \pi(\lambda^{1},-\lambda^{1},\ldots,\lambda^{k},-\lambda^{k}): \lambda^{1},\ldots,\lambda^{k}\in (\Ec_{n})^{k},\,\pi\in S_{l}\right\}$: the diagonal correlations set, see \eqref{eq:Dc diag def}.\\

$\mathcal{A}_n (2k) = \left\{\left(\lambda_{1},\dots,\lambda_{2k}\right)\in\Dc_{n}(2k): \; \forall 1\le i\le k \; \lambda_{2i-1}\ne-\lambda_{2i} \right\}$: the set of ``admissible'' $ 2k $-tuples of lattice points, see \eqref{eq:admissible_tuples}.\\

$S\left(\lambda_{1},\dots,\lambda_{2k}\right)$: the structure set of an admissible $ 2k $-tuple $\left(\lambda_{1},\dots,\lambda_{2k}\right)$, see \eqref{eq:admissible_tuples}.\\

$J_{\alpha}\left(x\right)$: the Bessel function
of the first kind of order $\alpha$.\\

$g_{d}\left(x\right)=\frac{J_{d/2}\left(2 \pi x\right)}{(2 \pi x)^{d/2}}$:
the Fourier transform of the characteristic function of the unit
ball in $\mathbb{R}^{d}$, see \eqref{eq:g_d}.\\

$	h_{2}\left(x\right)=\frac{J_{1}\left(2 \pi x\right)^{2}}{(2\pi x)^{2}}$, see \eqref{eq:h_2}.\\

$h_{3}\left(x\right)=2\pi^{-1}(2 \pi x)^{-4}\left(\frac{\sin 2 \pi x}{2\pi x}-\cos 2 \pi x\right)^{2}$, see \eqref{eq:h_3}.\\

$F_{\lambda_{0}}\left(s\right)=\frac{1}{N}\cdot \#\left\{ \lambda\in\mathcal{E}_{2;n}:\,\left\Vert \widehat{\lambda}-\widehat{\lambda_{0}}\right\Vert \le s\right\}$, see \eqref{eq:F_Lambda_Def}.\\

$F\left(s\right)=F_{f_{n}}\left(s\right)=\sum\limits_{\begin{subarray}{c}
	\lambda,\lambda'\in\mathcal{E}_{2;n}\\
	0<\left\Vert \widehat{\lambda}-\widehat{\lambda'}\right\Vert \le s
	\end{subarray}}\left|c_{\lambda}\right|^{2}\left|c_{\lambda'}\right|^{2}$, see \eqref{eq:F_Function}.\\

$F_{3}\left(s\right)=\frac{1}{N^{2}} \cdot \#\left\{ \lambda\ne\lambda'\in\mathcal{E}_{3;n}:\left\Vert \widehat{\lambda}-\widehat{\lambda'}\right\Vert \le s\right\}$, see \eqref{eq:F_3}.\\

$\E_{x_{0},\rho}[X_{f_{n},r}] = \frac{1}{\vol(B_{x_0}(\rho))}\int\limits_{B_{x_{0}}(\rho)}X_{f_{n},r;x}dx$: the ``restricted'' expected value of  $X_{f_{n},r} $, see \eqref{eq:restricted_expectation}.\\

$\Vc_{x_{0},\rho}(X_{f_{n},r}) = \E_{x_{0},\rho}[(X_{f_{n},r}-\E_{x_{0},\rho}[X_{f_{n},r}])^{2}]$: the restricted variance of $X_{f_{n},r} $, see \eqref{eq:restricted_variance}.\\

$\mathcal{C}_{n}(l;K) =
\left\{(\lambda^{1},\ldots,\lambda^{l})\in\mathcal{E}_{n}^{l}:\: 0 < \left\| \sum\limits_{j=1}^{l}\lambda^{j} \right\| \le K  \right\}$:  the set of length-$l$ spectral quasi-correlations, see \eqref{eq:quasi_correlations}.\\

Hypothesis $\mathcal{A}(n;l,\delta)$ holds if
$\mathcal{C}_{n}(l;n^{1/2-\delta}) = \varnothing$, see \eqref{eq:sep_hypothesis}.

\section{Proof of Theorem \ref{thm:VarMainGeneralized}, part 1: asymptotics for the variance, $d=2$.}

\label{sec:Proof_Main_thm_part1}

\subsection{Expressing the variance}

We begin with some preliminary expressions for the variance. Note that if $x$
is drawn randomly, uniformly on $\mathbb{T}^{d}$, then
\begin{equation}
\mathbb{E}\left[X_{f_{n},r}\right]=\frac{\pi^{d/2}}{\Gamma\left(d/2+1\right)}r^{d},\label{eq:ExpectationEquality}
\end{equation}
and therefore in this case, we have
\begin{equation}
\label{eq:variance_integral_form}
\Vc(X_{f_{n},r}) = \int\limits_{\Tb^{d}}\left(\int_{B_{x}\left(r\right)}f_{n}\left(y\right)^{2}\,\text{d}y- \frac{\pi^{d/2}}{\Gamma\left(d/2+1\right)}r^{d} \right)^{2}dx.
\end{equation}
Let $J_{\alpha}\left(x\right)$ be the Bessel function
of the first kind of order $\alpha$. The following lemma, proved in section \ref{sec:AuxLemmasProof}, explicates the inner integral in \eqref{eq:variance_integral_form}:
\begin{lemma}
	\label{lem:InnerIntegral}We have
	\begin{align}
	& \int_{B_{x}\left(r\right)}f_{n}\left(y\right)^{2}\,\text{d}y-\frac{\pi^{d/2}}{\Gamma\left(d/2+1\right)}r^{d} =\left(2\pi\right)^{d/2}r^{d}\sum_{\begin{subarray}{c}
		\lambda,\lambda'\in\mathcal{E}_{n}\\
		\lambda\ne\lambda'
		\end{subarray}}c_{\lambda}\overline{c_{\lambda'}}e\left(\left\langle x,\lambda-\lambda'\right\rangle \right)g_{d}\left(r\left\Vert \lambda-\lambda'\right\Vert \right),\label{eq:IntegrandVar}
	\end{align}
	where
	\begin{equation}
	\label{eq:g_d}
	g_{d}\left(x\right):=\frac{J_{d/2}\left(2 \pi x\right)}{(2 \pi x)^{d/2}}
	\end{equation}
	is the Fourier transform of the characteristic function of the unit
	ball in $\mathbb{R}^{d}$.
\end{lemma}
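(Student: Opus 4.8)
The plan is to compute the $L^2$-mass $\int_{B_x(r)} f_n(y)^2\,dy$ directly by expanding $f_n(y)^2$ into a double sum of exponentials and integrating each term against the characteristic function of the ball $B_x(r)$. First I would write, using \eqref{eq:fn sum exp},
\[
f_n(y)^2 = \sum_{\lambda,\lambda'\in\mathcal{E}_n} c_\lambda \overline{c_{\lambda'}} e(\langle y,\lambda-\lambda'\rangle),
\]
noting that because $f_n$ is real we may freely replace $e(\langle y,\lambda'\rangle)$ by its conjugate; then integrate over $y\in B_x(r)$. Substituting $y=x+z$ turns the integral over $B_x(r)$ into $e(\langle x,\lambda-\lambda'\rangle)$ times $\int_{B_0(r)} e(\langle z,\lambda-\lambda'\rangle)\,dz$, so the whole expression becomes
\[
\int_{B_x(r)} f_n(y)^2\,dy = \sum_{\lambda,\lambda'\in\mathcal{E}_n} c_\lambda\overline{c_{\lambda'}}\, e(\langle x,\lambda-\lambda'\rangle)\, \widehat{\mathbf{1}_{B_0(r)}}(\lambda-\lambda').
\]

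Next I would identify the Fourier transform of the indicator of a ball. The standard formula (see e.g. Stein--Weiss) is that the Fourier transform of $\mathbf{1}_{B_0(1)}$ in $\mathbb{R}^d$ is $\xi\mapsto \|\xi\|^{-d/2}J_{d/2}(2\pi\|\xi\|)$ up to the normalising power of $2\pi$, i.e. precisely $g_d(\|\xi\|)$ as defined in \eqref{eq:g_d}; by scaling, $\widehat{\mathbf{1}_{B_0(r)}}(\xi) = r^d \, \widehat{\mathbf{1}_{B_0(1)}}(r\xi) = (2\pi)^{d/2} r^d\, g_d(r\|\xi\|)$, the factor $(2\pi)^{d/2}$ coming from the chosen normalisation of $g_d$ (one checks the constant by evaluating at $\xi=0$, where $g_d(0) = \tfrac{1}{2^{d/2}\Gamma(d/2+1)}$ and the transform equals $\vol(B_0(r)) = \pi^{d/2}r^d/\Gamma(d/2+1)$). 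The diagonal terms $\lambda=\lambda'$ contribute exactly $\sum_\lambda |c_\lambda|^2 \cdot \vol(B_0(r)) = \pi^{d/2}r^d/\Gamma(d/2+1)$ by the normalisation \eqref{eq:BasicNormalization}; subtracting this recovers the left-hand side of \eqref{eq:IntegrandVar} as the off-diagonal sum, which is exactly the claimed identity.

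The only genuine subtlety — really the one point requiring care rather than an outright obstacle — is the correct value of the multiplicative constant $(2\pi)^{d/2}$ and consistency of the $2\pi$-conventions between the definition of $e(\cdot)$, the Bessel normalisation in $g_d$, and the formula $\E[X_{f_n,r}] = \pi^{d/2}r^d/\Gamma(d/2+1)$ in \eqref{eq:ExpectationEquality}; I would pin this down by the $\xi\to 0$ limit check just described, using $J_{d/2}(z)\sim (z/2)^{d/2}/\Gamma(d/2+1)$ as $z\to 0$. There is no convergence issue since $\mathcal{E}_n$ is finite, so all sums are finite and interchanging sum and integral is trivial. I would present this as: expand the square, change variables, invoke the ball-transform formula with the scaling relation, separate diagonal from off-diagonal, and verify the constant by the limiting value — that completes the proof of Lemma \ref{lem:InnerIntegral}.
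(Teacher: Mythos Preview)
Your proposal is correct and follows essentially the same approach as the paper's own proof: expand $f_n(y)^2$ as a double sum, translate the integration variable to reduce to the Fourier transform of the indicator of a ball centred at the origin, invoke the Bessel formula \eqref{eq:Fourier_ball}, and separate the diagonal contribution using \eqref{eq:BasicNormalization}. The only cosmetic difference is that the paper substitutes $y=rz+x$ (landing directly on $B_0(1)$) whereas you substitute $y=x+z$ and then scale; your added $\xi\to 0$ sanity check for the constant $(2\pi)^{d/2}$ is a helpful verification not made explicit in the paper.
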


The following formula for the variance follows from Lemma \ref{lem:InnerIntegral}, \eqref{eq:Zygmund 4-corr} and \eqref{eq:3d 4-corr}:

\begin{lemma}
	\label{lem:VarExpd2}~
	\begin{enumerate}
		\item (Granville-Wigman \cite[Lemma 2.1]{GranvilleWigman}) For $d=2$
		we have
		\begin{equation}
		\label{eq:VarFormula2d}
		\text{\ensuremath{\mathcal{V}}}\left(X_{f_{n},r}\right)=8\pi^{2}r^{4}\sum_{\begin{subarray}{c}
			\lambda,\lambda'\in\mathcal{E}_{n}\\
			\lambda\ne\lambda'
			\end{subarray}}\left|c_{\lambda}\right|^{2}\left|c_{\lambda'}\right|^{2}h_{2}\left(r\left\Vert \lambda-\lambda'\right\Vert \right)
		\end{equation}
		where
		\begin{equation}
		\label{eq:h_2}
		h_{2}\left(x\right):=\frac{J_{1}\left(2 \pi x\right)^{2}}{(2\pi x)^{2}}.
		\end{equation}
		\item For $d=3$ and for every $\epsilon>0$, we have
		\begin{align}
		\text{\ensuremath{\mathcal{V}}}\left(X_{f_{n},r}\right) & =16\pi^{3}r^{6}\sum_{\begin{subarray}{c}
			\lambda,\lambda'\in\mathcal{E}_{n}\\
			\lambda\ne\lambda'
			\end{subarray}}\left|c_{\lambda}\right|^{2}\left|c_{\lambda'}\right|^{2}h_{3}\left(r\left\Vert \lambda-\lambda'\right\Vert \right)\label{eq:VarFormula3d}\\
		& +O\left([\underline{v}]_{\infty}^2 r^{6}N^{-1/4+\epsilon}\right),\nonumber
		\end{align}
		where
		\begin{equation}
		\label{eq:h_3}
		h_{3}\left(x\right):=2\pi^{-1}(2 \pi x)^{-4}\left(\frac{\sin 2 \pi x}{2\pi x}-\cos 2 \pi x\right)^{2}.
		\end{equation}
	\end{enumerate}
\end{lemma}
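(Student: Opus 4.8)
The plan is to evaluate $\Vc(X_{f_{n},r})$ directly from the integral representation \eqref{eq:variance_integral_form}, by inserting the expansion of Lemma \ref{lem:InnerIntegral} for the inner integral, expanding the square, and integrating the resulting double sum over $\Tb^{d}$ term by term. Write $I(x):=\int_{B_{x}(r)}f_{n}(y)^{2}\,dy-\frac{\pi^{d/2}}{\Gamma(d/2+1)}r^{d}$ for the integrand, so that $\Vc(X_{f_{n},r})=\int_{\Tb^{d}}I(x)^{2}\,dx$ (the function $I$ being real-valued). By \eqref{eq:IntegrandVar},
\[
I(x)=(2\pi)^{d/2}r^{d}\sum_{\lambda\ne\lambda'}c_{\lambda}\overline{c_{\lambda'}}\,e(\langle x,\lambda-\lambda'\rangle)\,g_{d}(r\|\lambda-\lambda'\|),
\]
the sum running over $\lambda,\lambda'\in\Ec_{n}$, and the orthogonality relation $\int_{\Tb^{d}}e(\langle x,w\rangle)\,dx=\mathbf{1}_{\{w=0\}}$ for $w\in\Z^{d}$ reduces $\int_{\Tb^{d}}I(x)^{2}\,dx$ to a sum over the $4$-tuples $(\lambda,\lambda',\mu,\mu')\in\Ec_{n}^{4}$ with $\lambda\ne\lambda'$, $\mu\ne\mu'$ and $\lambda+\mu=\lambda'+\mu'$. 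Applying the substitution $\lambda'\mapsto-\lambda'$, $\mu'\mapsto-\mu'$ (permissible since $-\Ec_{n}=\Ec_{n}$) and using $c_{-\nu}=\overline{c_{\nu}}$, this is $(2\pi)^{d}r^{2d}$ times a sum over the length-$4$ spectral correlation set $\Sc_{n}(4)$ of \eqref{eq:Sc correlations def} (or its $3$-dimensional counterpart $\Sc_{3;n}(4)$ when $d=3$), with summand $c_{\lambda_{1}}c_{\lambda_{2}}c_{\lambda_{3}}c_{\lambda_{4}}\,g_{d}(r\|\lambda_{1}+\lambda_{2}\|)\,g_{d}(r\|\lambda_{3}+\lambda_{4}\|)$, restricted by $\lambda_{1}\ne-\lambda_{2}$ and $\lambda_{3}\ne-\lambda_{4}$.

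Next I would feed in the diagonal structure of the correlation set. For $d=2$, Zygmund's identity \eqref{eq:Zygmund 4-corr} gives $\Sc_{n}(4)=\Dc_{n}(4)$, so every admissible $4$-tuple splits into two zero-sum pairs; as the ``first-two/last-two'' pairing is already excluded by the restrictions, a pair $(\lambda,\lambda')$ with $\lambda\ne\lambda'$ contributes exactly through its two companions $(\mu,\mu')=(\lambda',\lambda)$ and $(\mu,\mu')=(-\lambda,-\lambda')$, each yielding (using $c_{-\nu}=\overline{c_{\nu}}$, $\|\lambda'-\lambda\|=\|\lambda-\lambda'\|$, and the elementary identity $g_{2}(x)^{2}=h_{2}(x)$) the term $|c_{\lambda}|^{2}|c_{\lambda'}|^{2}h_{2}(r\|\lambda-\lambda'\|)$. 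Together with the prefactor $(2\pi)^{2}$ and the multiplicity $2$ this is \eqref{eq:VarFormula2d}; the antipodal tuples $\lambda'=-\lambda$, where the two companions coincide, contribute only a lower-order amount $\ll r^{4}[\underline{v}]_{\infty}N^{-1}$ (via $h_{2}(x)\ll x^{-3}$) that is harmlessly included in the displayed sum. For $d=3$ the same enumeration of $\Dc_{3;n}(4)$ produces the main term $16\pi^{3}r^{6}\sum_{\lambda\ne\lambda'}|c_{\lambda}|^{2}|c_{\lambda'}|^{2}h_{3}(r\|\lambda-\lambda'\|)$, now with $(2\pi)^{3}$, multiplicity $2$, and the identity $g_{3}(x)^{2}=h_{3}(x)$ (which follows from $J_{3/2}(y)=\sqrt{2/(\pi y)}\bigl(\sin y/y-\cos y\bigr)$); the off-diagonal discrepancy $|\Sc_{3;n}(4)|-|\Dc_{3;n}(4)|=O(N^{7/4+\epsilon})$ from \eqref{eq:3d 4-corr} contributes an error $\ll r^{6}\cdot\max_{\lambda}|c_{\lambda}|^{4}\cdot\|g_{3}\|_{\infty}^{2}\cdot N^{7/4+\epsilon}\ll[\underline{v}]_{\infty}^{2}r^{6}N^{-1/4+\epsilon}$, since $\max_{\lambda}|c_{\lambda}|^{2}=[\underline{v}]_{\infty}/N$ and $g_{3}$ is bounded on $[0,\infty)$; this yields \eqref{eq:VarFormula3d}.

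The difficulty is combinatorial bookkeeping rather than analysis. The main care is needed, in passing from the correlation set $\Sc_{n}(4)$ (resp.\ $\Sc_{3;n}(4)$) to its diagonal $\Dc_{n}(4)$ (resp.\ $\Dc_{3;n}(4)$), to count correctly the two zero-sum pairings available to a generic admissible tuple, to check that the restrictions $\lambda\ne\lambda'$, $\mu\ne\mu'$ excise precisely the already-subtracted pairing, and to verify that all genuinely degenerate configurations ($\lambda'=\pm\lambda$, or coincidences among the parameters $\rho^{1},\rho^{2}$ in the parametrisation of the diagonal set) are of lower order. For $d=3$ the additional point is that \eqref{eq:3d 4-corr} is useful only in conjunction with flatness: the coefficient product over the $O(N^{7/4+\epsilon})$ non-diagonal tuples must be controlled by $\max_{\lambda}|c_{\lambda}|^{4}=[\underline{v}]_{\infty}^{2}/N^{2}$, which is the origin of the $[\underline{v}]_{\infty}^{2}$ in the error term of \eqref{eq:VarFormula3d}. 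Since the deep arithmetic inputs --- Zygmund's relation \eqref{eq:Zygmund 4-corr} and the Benatar--Maffucci asymptotic \eqref{eq:3d 4-corr} --- are quoted, the remaining content is just this counting, together with the two Bessel identities $g_{2}^{2}=h_{2}$ and $g_{3}^{2}=h_{3}$.
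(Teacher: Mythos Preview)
Your proposal is correct and is precisely the argument the paper has in mind: the paper's own proof is the single sentence that the formula follows from Lemma~\ref{lem:InnerIntegral} together with \eqref{eq:Zygmund 4-corr} (for $d=2$) and \eqref{eq:3d 4-corr} (for $d=3$). Your expansion via orthogonality over $\Tb^d$, the diagonal enumeration yielding multiplicity $2$ and the identities $g_d^2=h_d$, and the bookkeeping of the antipodal degeneracy $\lambda'=-\lambda$ and of the off-diagonal error $[\underline{v}]_\infty^2 N^{-2}\cdot N^{7/4+\epsilon}$ in $3$d are exactly how one fleshes this out.
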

Note that functions $g_{2}$ and $h_{2}$ satisfy the following properties:
\begin{lemma}[{\cite[(6.575.2), (8.440), (8.451.1), (8.472.2)]{GradshtaynRizhik}}]
	\label{lem:H2Formulas}We have
	\begin{enumerate}
		\item $\int_{0}^{\infty}h_{2}\left(s\right)\,\text{d}s=\frac{2}{3\pi^2}$.
		\item $g_{2}\left(s\right)\sim\frac{1}{2}\hspace{1em}\left(s\to0\right)$.
		\item $g_{2}\left(s\right)\ll s^{-3/2}\hspace{1em}\left(s\to\infty\right)$.
		\item $g_{2}'\left(s\right)=-\frac{J_{2}\left(2\pi s\right)}{s}\ll\left(1+s\right)^{-3/2}.$
	\end{enumerate}
\end{lemma}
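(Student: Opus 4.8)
The statement to be proved is Lemma \ref{lem:H2Formulas}, collecting four elementary facts about the functions $g_2$ and $h_2$ defined in \eqref{eq:g_d} and \eqref{eq:h_2}. All four are classical identities or asymptotics for Bessel functions, and the reference \cite{GradshtaynRizhik} has already been cited; so the ``proof'' consists of tracing each item to the appropriate formula and performing the small substitutions needed to match our normalisation (note we use $e(x)=e^{2\pi i x}$-type conventions, so $g_2(x)=J_1(2\pi x)/(2\pi x)$, i.e. an extra factor of $2\pi$ inside every Bessel function compared with the textbook entries).

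\emph{Item (1).} I would compute $\int_0^\infty h_2(s)\,ds = \int_0^\infty \frac{J_1(2\pi s)^2}{(2\pi s)^2}\,ds$ by the substitution $u=2\pi s$, giving $\frac{1}{2\pi}\int_0^\infty \frac{J_1(u)^2}{u^2}\,du$. The Weber--Schafheitlin type integral $\int_0^\infty J_\nu(u)^2 u^{-\lambda}\,du$ has the closed form recorded in \cite[(6.575.2)]{GradshtaynRizhik}; specialising to $\nu=1$, $\lambda=2$ yields $\int_0^\infty J_1(u)^2 u^{-2}\,du = \frac{4}{3\pi}$, whence $\int_0^\infty h_2(s)\,ds = \frac{1}{2\pi}\cdot\frac{4}{3\pi}=\frac{2}{3\pi^2}$, as claimed.

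\emph{Items (2)--(4).} For (2), the power series $J_1(z)=\frac{z}{2}-\frac{z^3}{16}+\cdots$ from \cite[(8.440)]{GradshtaynRizhik} gives $J_1(2\pi s)/(2\pi s)\to\frac12$ as $s\to 0$. For (3), the large-argument asymptotic $J_\nu(z)=\sqrt{2/(\pi z)}\cos(z-\nu\pi/2-\pi/4)+O(z^{-3/2})$ from \cite[(8.451.1)]{GradshtaynRizhik} gives $J_1(2\pi s)\ll s^{-1/2}$, hence $g_2(s)=J_1(2\pi s)/(2\pi s)\ll s^{-3/2}$. For (4), I would use the differentiation identity $\frac{d}{dz}\bigl(z^{-\nu}J_\nu(z)\bigr)=-z^{-\nu}J_{\nu+1}(z)$ from \cite[(8.472.2)]{GradshtaynRizhik} with $\nu=1$: since $g_2(s)=\frac{1}{2\pi}\cdot\frac{J_1(2\pi s)}{2\pi s}$, the chain rule gives $g_2'(s)=-\frac{J_2(2\pi s)}{s}$; then bounding $J_2$ by its power series near $0$ (so $J_2(2\pi s)/s\ll s$, bounded) and by the asymptotic \cite[(8.451.1)]{GradshtaynRizhik} for large $s$ (so $J_2(2\pi s)/s\ll s^{-3/2}$) yields the uniform bound $g_2'(s)\ll(1+s)^{-3/2}$.

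\emph{Expected main obstacle.} There is no genuine obstacle here — every step is a lookup in \cite{GradshtaynRizhik} combined with a linear change of variables. The only thing requiring care is bookkeeping of the $2\pi$ factors in our Fourier normalisation, so that the textbook constants are correctly rescaled (in particular getting the constant $\frac{2}{3\pi^2}$ in item (1) exactly right), and checking that the near-origin and large-argument regimes patch together to give the stated uniform-in-$s$ bounds in (3) and (4) rather than merely the two one-sided asymptotics.
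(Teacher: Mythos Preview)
Your proposal is correct and matches the paper's approach exactly: the paper does not give a proof of this lemma at all, merely citing the four Gradshteyn--Ryzhik entries, and your write-up simply unpacks those citations with the appropriate $2\pi$ rescalings. (One harmless slip: in item (4) you write $g_2(s)=\tfrac{1}{2\pi}\cdot\tfrac{J_1(2\pi s)}{2\pi s}$, but $g_2(s)=\tfrac{J_1(2\pi s)}{2\pi s}$ as you had it earlier; your final derivative $g_2'(s)=-J_2(2\pi s)/s$ is nonetheless correct.)
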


\subsection{Proof of Theorem \ref{thm:VarMainGeneralized}, part 1:}

\begin{definition}
	For $\lambda\in\mathcal{E}_{n}$ let $\widehat{\lambda}=\lambda/\sqrt{n}$
	be the projection of $\lambda$ onto the unit circle $\mathcal{S}^{1}.$
	\begin{enumerate}
		\item For $ \lambda_0 \in \mathcal{E}_n $ and $0\le s\le2$, denote	\begin{equation}
		\label{eq:F_Lambda_Def}
		F_{\lambda_{0}}\left(s\right)=\frac{1}{N}\cdot \#\left\{ \lambda\in\mathcal{E}_{n}:\,\left\Vert \widehat{\lambda}-\widehat{\lambda_{0}}\right\Vert \le s\right\} .
		\end{equation}
	\item For $0\le s\le2$ denote \begin{equation}
	F\left(s\right)=F_{f_{n}}\left(s\right)=\sum_{\begin{subarray}{c}
		\lambda,\lambda'\in\mathcal{E}_{n}\\
		0<\left\Vert \widehat{\lambda}-\widehat{\lambda'}\right\Vert \le s
		\end{subarray}}\left|c_{\lambda}\right|^{2}\left|c_{\lambda'}\right|^{2}.\label{eq:F_Function}
	\end{equation}
\end{enumerate}
\end{definition}

	Recall that $\widetilde{V}(\underline{v})=\cos^{2}\theta \cdot [\underline{v}]_{\infty} V(\underline{v})$
by \eqref{eq:alphaDef} and \eqref{eq:A4<->theta}.

\begin{proposition}
	\label{prop:DiscreteProp}We have
	\begin{align*}
	F\left(s\right) & =\frac{s}{\pi\cos^{2}\theta}\left(1+O\left(s^{2}+s^{-1}\Delta\left(n\right)+\widetilde{V}(\underline{v}) s+\widetilde{V}(\underline{v}) s^{-1}\Delta\left(n\right)^{2}\right)\right).
	\end{align*}
\end{proposition}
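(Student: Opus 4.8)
The goal is to estimate $F(s) = \sum_{0 < \|\widehat{\lambda}-\widehat{\lambda'}\| \le s} |c_\lambda|^2 |c_{\lambda'}|^2$, which is a weighted count of pairs of lattice points on $\sqrt{n}\,\mathcal{S}^1$ whose angular separation is at most $\approx 2\arcsin(s/2)$. The plan is to first fix $\lambda'$ and sum over $\lambda$ in the arc around it, then sum the resulting quantity against the weights $|c_{\lambda'}|^2$. The central tool is the discrepancy bound $\mathcal{D}(n,\epsilon)$: for a fixed $\lambda_0$, the number $F_{\lambda_0}(s)$ of lattice points $\lambda$ with $\|\widehat{\lambda}-\widehat{\lambda_0}\| \le s$ is, by \eqref{eq:Discrepancy_2d}, equal to $N$ times (arc-length fraction $+ O(\Delta(n)))$. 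Since $\|\widehat{\lambda}-\widehat{\lambda_0}\| \le s$ corresponds to an arc of angular half-width $2\arcsin(s/2) = s(1 + O(s^2))$, the arc-length fraction is $\frac{s}{\pi}(1 + O(s^2))$, giving $F_{\lambda_0}(s) = N\left(\frac{s}{\pi}(1+O(s^2)) + O(\Delta(n))\right)$, which we rewrite as $\frac{Ns}{\pi}\left(1 + O(s^2 + s^{-1}\Delta(n))\right)$.

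\textbf{Main step: handling the weights.} If all $|c_\lambda|^2$ were equal to $1/N$ (Bourgain's case), then $F(s) = \frac{1}{N}\sum_{\lambda'} \frac{1}{N}(F_{\lambda'}(s) - 1) = \frac{1}{N}\cdot\frac{s}{\pi}(1 + O(s^2 + s^{-1}\Delta(n)))$, but since $A_4 = \cos^{-2}\theta = 1$ here and $\widetilde V = 0$ this matches the claimed formula with $\cos^2\theta = 1$. For the general case I would write $F(s) = \sum_{\lambda'}|c_{\lambda'}|^2 \sum_{\lambda:\,0<\|\widehat\lambda-\widehat{\lambda'}\|\le s} |c_\lambda|^2$ and compare the inner sum to $\frac{1}{N}\cdot |c_{\lambda'}|^2$-independent main term. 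The key idea is that within the short arc of angular width $O(s)$ around $\lambda'$ (containing $\approx Ns/\pi$ lattice points), the coefficients $|c_\lambda|^2$ are nearly constant on average: replacing each $|c_\lambda|^2$ in the arc by $|c_{\lambda'}|^2$ introduces an error controlled by telescoping consecutive differences $||c_{\lambda_+}|^2 - |c_\lambda|^2|$, i.e. by $V(\underline v)/N$ per unit, times the number of points in the arc times $[\underline v]_\infty/N$ for the outer weight — this is precisely where $V(\underline v)$, $[\underline v]_\infty$, and hence $\widetilde V(\underline v) = \cos^2\theta\cdot[\underline v]_\infty V(\underline v)$ enter. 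After this replacement, $\sum_{\lambda'}|c_{\lambda'}|^2 \cdot |c_{\lambda'}|^2 \cdot \#\{\lambda \text{ in arc}\}$ becomes $\left(\sum_{\lambda'}|c_{\lambda'}|^4\right)\cdot \frac{Ns}{\pi}(1+\dots) = \frac{A_4}{N}\cdot\frac{Ns}{\pi}(1+\dots) = \frac{s}{\pi\cos^2\theta}(1+\dots)$, using $A_4 = \cos^{-2}\theta$ from \eqref{eq:A4<->theta}. I would also need to subtract the diagonal term $\lambda = \lambda'$, which contributes $\sum |c_{\lambda'}|^4 = A_4/N = O([\underline v]_\infty/N)$, absorbed into the error.

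\textbf{The delicate error-term bookkeeping.} The hardest part will be tracking how the discrepancy error $O(\Delta(n))$ interacts with the coefficient-variation error. Naively one gets a term like $[\underline v]_\infty \cdot V(\underline v) \cdot \Delta(n) \cdot (\text{something})$; the claimed bound has $\widetilde V(\underline v)\, s^{-1}\Delta(n)^2$, so a $\Delta(n)^2$ rather than $\Delta(n)$ appears. This presumably comes from applying the discrepancy bound \emph{twice}: once to count points in the outer arc and once, more carefully, when estimating the total coefficient-variation budget accumulated across arcs (i.e., the discrepancy controls not just counts but also how the running sums of $|c_\lambda|^2$ deviate from their expected linear growth, and the product of two such deviations yields $\Delta(n)^2$). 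I would organize this by introducing the empirical distribution function of the angles, writing the inner sum as a Stieltjes integral $\int |c_\lambda|^2\, dF_{\lambda'}$, integrating by parts to move the variation onto the coefficients, and bounding the boundary and integral terms using $\|dF_{\lambda'} - (\text{uniform})\| \ll \Delta(n)$ together with the total variation $\sum||c_{\lambda_+}|^2 - |c_\lambda|^2| = V(\underline v)/N$. Normalizing everything through $A_4(\underline v) = \cos^{-2}\theta$ to convert $[\underline v]_\infty V(\underline v)$ into $\widetilde V(\underline v)$, and keeping the $s^{-1}$ factors straight from dividing through by the main term $s/(\pi\cos^2\theta)$, will require care but no new ideas. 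The final step is simply to collect the four error contributions $s^2$, $s^{-1}\Delta(n)$, $\widetilde V(\underline v)s$, and $\widetilde V(\underline v)s^{-1}\Delta(n)^2$ into the stated $O(\cdot)$.
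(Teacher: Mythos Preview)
Your proposal is correct and follows essentially the same route as the paper: fix $\lambda'$, apply Abel/partial summation to the inner weighted sum to replace $|c_\lambda|^2$ by $|c_{\lambda'}|^2$ at the cost of a variation term, use the discrepancy estimate (Lemma~\ref{lem:CosToDist}) for the resulting counts, and then sum against $|c_{\lambda'}|^2$ to produce the factor $A_4=\cos^{-2}\theta$. Your intuition for the $\Delta(n)^2$ is exactly right---the paper bounds the Abel remainder by $N(s+\Delta(n))\cdot \frac{V(\underline v)}{N}\cdot [\underline v]_\infty(s+\Delta(n))=(s+\Delta(n))^2[\underline v]_\infty V(\underline v)$, one factor of $(s+\Delta(n))$ coming from the inner count $\#\{\mu\}$ and the other from bounding $\sum_{\lambda'\text{ in arc}}|c_{\lambda'}|^2$ after switching the order of summation; the only technical point you leave implicit is that the arc around $\lambda'$ must be split into its clockwise and counter-clockwise halves (via the relation $\prec$) so that Abel summation has a well-defined linear order to run along.
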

We postpone the proof of Proposition \ref{prop:DiscreteProp} until section \ref{sec:ProofOfLemmaDiscrete} to present the proof of the first part of Theorem \ref{thm:VarMainGeneralized} (that yields the first part of Theorem \ref{thm:VarMain}):

\begin{proof}[Proof of Theorem \ref{thm:VarMainGeneralized}, part 1 assuming Proposition \ref{prop:DiscreteProp}]
	Assume that $ n\in S_2 $ satisfies the hypothesis $ \mathcal{D}(n,\epsilon/2) $. We may rewrite (\ref{eq:VarFormula2d}) as
	\begin{equation}
	\mathcal{V}\left(X_{f_{n},r}\right)=8\pi^{2}r^{4}\int_{0}^{2}h_{2}\left(Ts\right)\,\text{d}F\left(s\right).\label{eq:VarianceintegralForm}
	\end{equation}
	We apply integration by parts  to (\ref{eq:VarianceintegralForm})
	twice, in opposite directions: first, by integration by parts and
	Proposition \ref{prop:DiscreteProp}, we get
	\begin{align}
	8\pi^{2}r^{4}\int_{0}^{2}h_{2}\left(Ts\right)\,\text{d}F\left(s\right) & =8\pi^{2}r^{4}h_{2}\left(2T\right)F\left(2\right)-8\pi^{2}r^{4}\int_{0}^{2}F\left(s\right)\,\text{d}h_{2}\left(Ts\right)\label{eq:VarEqAfterIntByParts}\\
	& =8\pi^{2}r^{4}h_{2}\left(2T\right)F\left(2\right)-8\pi r^{4}\cos^{-2}\theta\int_{0}^{2}s\,\text{d}h_{2}\left(Ts\right)\nonumber \\
	& +Err\left(X_{f_n,r}\right)\nonumber
	\end{align}
	where
	\begin{align*}
	& Err\left(X_{f_n,r}\right)\ll r^{4}\cos^{-2}\theta\int_{0}^{2}\left(s^{3}+\Delta\left(n\right)+\widetilde{V}(\underline{v}) s^{2}+\widetilde{V}(\underline{v})\Delta\left(n\right)^{2}\right)T\left|h_{2}'\left(Ts\right)\right|\,\text{d}s.
	\end{align*}
	Integrating by parts again, the first two terms on the r.h.s of (\ref{eq:VarEqAfterIntByParts})
	satisfy
	\begin{align}
	& 8\pi^{2}r^{4}h_{2}\left(2T\right)F\left(2\right)-8\pi r^{4}\cos^{-2}\theta\int_{0}^{2}s\,\text{d}h_{2}\left(Ts\right)=8\pi^{2}r^{4}h_{2}\left(2T\right)F\left(2\right)\label{eq:MainTermsVar}\\
	& -16\pi r^{4}h_{2}\left(2T\right)\cos^{-2}\theta+8\pi r^{4}\cos^{-2}\theta\int_{0}^{2}h_{2}\left(Ts\right)\,\text{d}s.\nonumber
	\end{align}
	By the first and the third parts of Lemma \ref{lem:H2Formulas},
	\begin{equation}
	\int_{0}^{2}h_{2}\left(Ts\right)\,\text{d}s=\frac{1}{T}\int_{0}^{2T}h_{2}\left(s\right)\,\text{d}s=\frac{2}{3\pi^2}T^{-1}+O\left(T^{-3}\right),\label{eq:h2Integral}
	\end{equation}
	and therefore, substituting (\ref{eq:h2Integral}) into (\ref{eq:MainTermsVar}),
	we obtain
	\begin{align}
	\label{eq:VarianceMainTerms}
	8\pi^{2}r^{4}h_{2}\left(2T\right)F\left(2\right)-8\pi r^{4}\cos^{-2}\theta\int_{0}^{2}s\,\text{d}h_{2}\left(Ts\right) & =\frac{16}{3\pi}\cos^{-2}\theta r^{4}T^{-1} +O\left(\cos^{-2}\theta r^{4}T^{-3}\right).
	\end{align}
	By the fourth part of Lemma \ref{lem:H2Formulas},
	\begin{align*}
	\int_{0}^{2}T\left|h_{2}'\left(Ts\right)\right|\,\text{d}s & =\int_{0}^{2T}\left|h_{2}'\left(s\right)\right|\,\text{d}s\le\int_{0}^{\infty}\left|h_{2}'\left(s\right)\right|\,\text{d}s<\infty,
	\end{align*}
	\[
	\int_{0}^{2}s^{2}T\left|h_{2}'\left(Ts\right)\right|\,\text{d}s=T^{-2}\int_{0}^{2T}s^{2}\left|h_{2}'\left(s\right)\right|\,\text{d}s\ll T^{-2}\log T
	\]
	and
	\[
	\int_{0}^{2}s^{3}T\left|h_{2}'\left(Ts\right)\right|\,\text{d}s=T^{-3}\int_{0}^{2T}s^{3}\left|h_{2}'\left(s\right)\right|\,\text{d}s\ll T^{-2},
	\]
	and therefore for $ n $ satisfying $ \mathcal{D}(n,\epsilon/2), $
	\begin{align}
	Err\left(X_{f_n,r}\right) & \ll\cos^{-2}\theta r^{4}\left(T^{-2}+\Delta\left(n\right)+\widetilde{V}(\underline{v}) T^{-2}\log T+\widetilde{V}(\underline{v})\Delta\left(n\right)^{2}\right)\nonumber \\
	& \ll\cos^{-2}\theta r^{4}\left(T^{-2}+\left(\log n\right)^{-\frac{1}{2}\log\frac{\pi}{2}+\frac{\epsilon}{2}}+\widetilde{V}(\underline{v}) T^{-2}\log T+\widetilde{V}(\underline{v})\left(\log n\right)^{-\log\frac{\pi}{2}+\epsilon}\right),\label{eq:ErrorTerm}
	\end{align}
and \eqref{eq:var asympt d=2 precise} follows from \eqref{eq:VarEqAfterIntByParts}, \eqref{eq:VarianceMainTerms} and \eqref{eq:ErrorTerm}.
\end{proof}
Note that by (\ref{eq:ErrorTerm}), for Bourgain's eigenfunctions,
for almost all $n\in S_{2}$ we have
\begin{equation*}
\sup_{\begin{subarray}{c}
	r > r_{0} \\ f_n\in \Bc{n}
	\end{subarray}}\left|\frac{\text{\ensuremath{\mathcal{V}}}\left(X_{f_{n},r}\right)}{r^{4}}-\frac{16}{3\pi}T^{-1}\right|=O\left(T_{0}^{-2}+\left(\log n\right)^{-\frac{1}{2}\log\frac{\pi}{2}+\epsilon}\right)
\end{equation*}
for every $\epsilon>0$, and in particular
\begin{equation}
\label{eq:o_r4_2d}
\text{\ensuremath{\mathcal{V}}}\left(X_{f_{n},r}\right)=o\left(r^{4}\right)
\end{equation}
uniformly for $r > r_{0}$ for a density one sequence in $S_{2}$. Therefore, \eqref{eq:o_r4_2d} serves as a refinement of \cite[Corollary 1.10]{GranvilleWigman} for this specific case
(for a density one sequence in $S_{2}$), since \cite[Corollary 1.10]{GranvilleWigman} yields $\mathcal{V}\left(X_{f_n,r}\right)=o\left(r^{4}\right)$ under the additional assumption $T_{0}\gg n^{4\epsilon}$.

\subsection{Proof of Proposition \ref{prop:DiscreteProp}} \label{sec:ProofOfLemmaDiscrete}

In this section we prove Proposition \ref{prop:DiscreteProp}. First, we
define a binary relation on $\mathcal{E}_n$:
\begin{definition}
	for $\lambda\ne-\lambda'\in\mathcal{E}_{n}$, we say that $\lambda\prec\lambda'$
	if the arc on the circle $\sqrt{n}\mathcal{S}^{1}$ that connects
	$\lambda$ to $\lambda'$ counter-clockwise to $\lambda'$ is shorter
	than the arc that connects them clockwise to $\lambda'$. Recall that
	$\lambda_{+}$ is the clockwise nearest neighbour of $\lambda$ on
	$\sqrt{n}\mathcal{S}^{1}$.
	The proof of Proposition \ref{prop:DiscreteProp} employs the following
	auxiliary lemma to be proved at section \ref{sec:AuxLemmasProof}, establishing
	(\ref{eq:F_Function}) in the particular case $\left|c_{\lambda}\right|^{2}=1$
	for every $\lambda\in\mathcal{E}_{n}$:
\end{definition}
\begin{lemma}
	\label{lem:CosToDist}
	
	Fix $\lambda'\in\mathcal{E}_{n}.$ For $0\le s<2$, we have
	\begin{equation}
	\frac{1}{N}\cdot \#\left\{ \lambda\in\mathcal{E}_{n}:\,\lambda\succeq\lambda',\left\Vert \widehat{\lambda}-\widehat{\lambda'}\right\Vert \le s\right\} =\frac{s}{2\pi}+O\left(s^{3}+\Delta\left(n\right)\right)\label{eq:CosToDistEq}
	\end{equation}
	where the constant involved in the 'O'-notation in (\ref{eq:CosToDistEq})
	is absolute.
\end{lemma}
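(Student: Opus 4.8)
The plan is to reduce the lattice-point count on the circle to a one-dimensional arc-length count, and then to invoke the angular equidistribution with discrepancy $\Delta(n)$.

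\smallskip

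\textbf{Setup and reduction to arc length.} Fix $\lambda'\in\mathcal{E}_n$ and write $\widehat{\lambda'}=e^{i\psi}$. For a point $\lambda\in\mathcal{E}_n$ with $\widehat{\lambda}=e^{i\phi}$, the chordal distance on the unit circle is $\|\widehat{\lambda}-\widehat{\lambda'}\|=2|\sin((\phi-\psi)/2)|$. Restricting to $\lambda\succeq\lambda'$ (those $\lambda$ reached from $\lambda'$ by a short counter-clockwise arc, i.e. $\phi-\psi\in[0,\pi)$ in the appropriate sense) makes the map between angular gap and chord injective, so the condition $\|\widehat{\lambda}-\widehat{\lambda'}\|\le s$ becomes $2\sin((\phi-\psi)/2)\le s$, i.e. $\phi-\psi\le 2\arcsin(s/2)=: \beta(s)$. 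Thus the count in question equals
\[
\frac{1}{N}\cdot\#\{\lambda\in\mathcal{E}_n:\ \phi_\lambda-\psi\in[0,\beta(s)]\ \mathrm{mod}\ 2\pi\}.
\]
Now $\beta(s)=s+O(s^3)$ by the Taylor expansion of $\arcsin$, so up to a set of angles of length $O(s^3)$ the relevant arc $[\psi,\psi+\beta(s)]$ agrees with an arc of length $s$; the number of $\phi_\lambda$ in that sliver is at most $N$ times $O(s^3)$ by... no — rather, applying the discrepancy bound, any such sliver contains $\le N(s^3/(2\pi)+\Delta(n))$ points. So replacing $\beta(s)$ by $s$ costs $O(s^3+\Delta(n))$ after dividing by $N$, which is absorbed into the error term.

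\smallskip

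\textbf{Applying the discrepancy.} By the definition \eqref{eq:Discrepancy_2d} of $\Delta(n)$ with $[a,b]=[\psi,\psi+\beta(s)]$ (reduced mod $2\pi$; if it wraps around $2\pi$ split into two sub-arcs, each handled by the same bound, doubling the constant), we get
\[
\frac{1}{N}\cdot\#\{\lambda:\ \phi_\lambda\in[\psi,\psi+\beta(s)]\}=\frac{\beta(s)}{2\pi}+O(\Delta(n))=\frac{s}{2\pi}+O(s^3)+O(\Delta(n)).
\]
Combining with the previous paragraph yields \eqref{eq:CosToDistEq} with an absolute implied constant, since all the $O$-terms have absolute constants (the Taylor remainders are absolute, and the discrepancy contributions come with an absolute factor at most $2$ from the possible wrap-around).

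\smallskip

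\textbf{Expected main obstacle.} The one genuinely delicate point is the handling of the relation $\succeq$: one must be sure that on the range $0\le s<2$ the short-arc condition really does make $\phi\mapsto\|\widehat\lambda-\widehat{\lambda'}\|$ monotone and that the boundary cases ($\lambda=-\lambda'$, which is excluded, and antipodal-ish points where $s$ is close to $2$) do not spoil the count; near $s=2$ the derivative $\beta'(s)=1/\sqrt{1-s^2/4}$ blows up, so the substitution $\beta(s)=s+O(s^3)$ must be justified only for $s$ bounded away from $2$, and for $s$ in a fixed neighbourhood of $2$ one instead notes the count is $\tfrac12+O(\Delta(n))$ trivially and $s/(2\pi)$ is a bounded quantity, so the stated error (which includes an $O(s^3)=O(1)$ term there) still dominates. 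Packaging these two regimes uniformly into the single clean bound \eqref{eq:CosToDistEq} is the part requiring care; everything else is a direct application of the definition of discrepancy.
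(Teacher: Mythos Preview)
Your proof is correct and follows essentially the same route as the paper: translate the chordal constraint $\|\widehat\lambda-\widehat{\lambda'}\|\le s$ into an angular constraint (you use $2\arcsin(s/2)$, the paper uses the equivalent $\arccos(1-s^2/2)$), apply the discrepancy bound to the resulting arc, and then expand the inverse trig function as $s+O(s^3)$. Your explicit treatment of the regime $s$ near $2$ (where the Taylor remainder is no longer small but is absorbed by $O(s^3)=O(1)$) is a point the paper's proof glosses over; otherwise the arguments coincide.
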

\begin{remark}
	The estimate (\ref{eq:CosToDistEq}) is also valid with either `$\succ$', `$\preceq$'
	or `$\prec$' in place of `$\succeq$'.
\end{remark}
We are now in a position to prove Proposition \ref{prop:DiscreteProp}:
\begin{proof}[Proof of Proposition \ref{prop:DiscreteProp} assuming Lemma \ref{lem:CosToDist}]
	First, we write
	\begin{align}
	F\left(s\right) & =\sum_{\lambda'\in\mathcal{E}_{n}}\left|c_{\lambda'}\right|^{2}\sum_{\begin{subarray}{c}
		\lambda\in\mathcal{E}_{n}\\
		\left\Vert \widehat{\lambda}-\widehat{\lambda'}\right\Vert \le s\\
		\lambda\preceq\lambda'
		\end{subarray}}\left|c_{\lambda}\right|^{2}+\sum_{\lambda'\in\mathcal{E}_{n}}\left|c_{\lambda'}\right|^{2}\sum_{\begin{subarray}{c}
		\lambda\in\mathcal{E}_{n}\\
		\left\Vert \widehat{\lambda}-\widehat{\lambda'}\right\Vert \le s\\
		\lambda\succeq\lambda'
		\end{subarray}}\left|c_{\lambda}\right|^{2}\label{eq:PartialSummation} +O\left(\frac{A_{4}}{N}\right).
	\end{align}
	Using summation by parts, we get that for every $\lambda'\in\mathcal{E}_{n}$
	\begin{align}
	\sum_{\begin{subarray}{c}
		\lambda\in\mathcal{E}_{n}\\
		\left\Vert \widehat{\lambda}-\widehat{\lambda'}\right\Vert \le s\\
		\lambda\preceq\lambda'
		\end{subarray}}\left|c_{\lambda}\right|^{2} & =\left|c_{\lambda'}\right|^{2} \cdot \#\left\{ \lambda\in\mathcal{E}_{n}:\,\lambda\preceq\lambda',\left\Vert \widehat{\lambda}-\widehat{\lambda'}\right\Vert \le s\right\} \label{eq:PartialSumEq}\\
	& -\sum_{\begin{subarray}{c}
		\lambda\in\mathcal{E}_{n}\\
		\left\Vert \widehat{\lambda}-\widehat{\lambda'}\right\Vert \le s\\
		\lambda\prec\lambda'
		\end{subarray}}\left(\left|c_{\lambda_{+}}\right|^{2}-\left|c_{\lambda}\right|^{2}\right) \cdot \#\left\{ \mu\in\mathcal{E}_{n}:\,\mu\preceq\lambda,\,\left\Vert \widehat{\mu}-\widehat{\lambda'}\right\Vert \le s\right\} .\nonumber
	\end{align}
	By Lemma \ref{lem:CosToDist}, the contribution of the first term
	on the r.h.s of (\ref{eq:PartialSumEq}) to $F\left(s\right)$ is
	\begin{align}
	\label{eq:first_term_discrete}
	 \sum_{\lambda'\in\mathcal{E}_{n}}\left|c_{\lambda'}\right|^{4} \cdot \#\left\{ \lambda\in\mathcal{E}_{n}:\,\lambda\preceq\lambda',\left\Vert \widehat{\lambda}-\widehat{\lambda'}\right\Vert \le s\right\}
	 =A_{4} \cdot
	\left(s/2\pi+O\left(s^{3}+\Delta\left(n\right)\right)\right).
	\end{align}
	The contribution of the sum on the r.h.s of (\ref{eq:PartialSumEq})
	to $F\left(s\right)$ is
	\begin{align}
	\label{eq:second_term_discrete}
	& \sum_{\lambda'\in\mathcal{E}_{n}}\left|c_{\lambda'}\right|^{2}\sum_{\begin{subarray}{c}
		\lambda\in\mathcal{E}_{n}\\
		\left\Vert \widehat{\lambda}-\widehat{\lambda'}\right\Vert \le s\\
		\lambda\prec\lambda'
		\end{subarray}}\left(\left|c_{\lambda_{+}}\right|^{2}-\left|c_{\lambda}\right|^{2}\right) \cdot \#\left\{ \mu\in\mathcal{E}_{n}:\,\mu\preceq\lambda,\,\left\Vert \widehat{\mu}-\widehat{\lambda'}\right\Vert \le s\right\} \\
	& \ll N\left(s+\Delta\left(n\right)\right)\sum_{\begin{subarray}{c}
		\lambda\in\mathcal{E}_{n}\end{subarray}}\left|\left|c_{\lambda_{+}}\right|^{2}-\left|c_{\lambda}\right|^{2}\right|\sum_{\begin{subarray}{c}
		\lambda'\in\mathcal{E}_{n} \nonumber \\
		\left\Vert \widehat{\lambda}-\widehat{\lambda'}\right\Vert \le s\\
		\lambda\prec\lambda'
		\end{subarray}}\left|c_{\lambda'}\right|^{2}\ll\left(s+\Delta\left(n\right)\right)^{2}[\underline{v}]_{\infty} V(\underline{v}) .
	\end{align}
	By \eqref{eq:first_term_discrete} and \eqref{eq:second_term_discrete}, we have
	\begin{equation}
	\label{eq:first_summation_discrete}
	\sum_{\begin{subarray}{c}
		\lambda\in\mathcal{E}_{n}\\
		\left\Vert \widehat{\lambda}-\widehat{\lambda'}\right\Vert \le s\\
		\lambda\preceq\lambda'
		\end{subarray}}\left|c_{\lambda}\right|^{2} = A_{4} \cdot
	\left(s/2\pi+O\left(s^{3}+\Delta\left(n\right)\right)\right) + O\left(\left(s+\Delta\left(n\right)\right)^{2}[\underline{v}]_{\infty}V(\underline{v}) \right).
	\end{equation}
	By symmetry, the second summation in (\ref{eq:PartialSummation})
	obeys \eqref{eq:PartialSumEq} with `$ \succ $', `$ \succeq $' and $ |c_{\lambda_-}|^2 $ in place of `$ \prec $', `$ \preceq $'  and $ |c_{\lambda_+}|^2 $, where  $ \lambda_- $ is the counter-clockwise nearest neighbour to $ \lambda $. The statement of Proposition \ref{prop:DiscreteProp}
	follows from the analogues of the estimates \eqref{eq:first_term_discrete}, \eqref{eq:second_term_discrete} and \eqref{eq:first_summation_discrete}.
\end{proof}

\section{Proof of Theorem \ref{thm:VarMainGeneralized}, part 2: Gaussian moments, $d=2$.}

\label{sec:Proof_Main_thm_part2}

In this section we study the higher moments of $\hat{X}_{f_{n},r}$ defined in \eqref{eq:standardizedX}, and prove the second part of
Theorem \ref{thm:VarMainGeneralized}, also implying the second part of Theorem \ref{thm:VarMain}.

The proof of the following lower bound for $ \Vc\left(X_{f_{n},r}\right) $ with  $f_{n}\in \Fc_{2}(n;T(n),\eta(n))$ goes along the same lines as the proof of the lower bound in Theorem \ref{thm:UpperBound2d} below:

\begin{lemma}
	\label{lem:Lower_Bound_F2}
	In the setting of Theorem \ref{thm:VarMainGeneralized} part (2), we have
	 \begin{equation*}
     \frac{\Vc(X_{f_n,r})}{r^{4}} \gg T(n)^{-1-2\eta(n)}
	 \end{equation*}
	 uniformly for $r_{0} < r < r_1$ and
	 $f_{n}\in \Fc_{2}(n;T(n),\eta(n))$.
	
\end{lemma}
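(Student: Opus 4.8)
The plan is to extract a lower bound for the variance from the explicit formula \eqref{eq:VarFormula2d},
\[
\Vc(X_{f_n,r}) = 8\pi^{2}r^{4}\sum_{\lambda\ne\lambda'\in\mathcal{E}_{n}}|c_{\lambda}|^{2}|c_{\lambda'}|^{2}\,h_{2}\bigl(T\|\widehat{\lambda}-\widehat{\lambda'}\|\bigr),
\]
by keeping only a controlled positive portion of the sum. Since $h_{2}(x)=J_{1}(2\pi x)^{2}/(2\pi x)^{2}\ge 0$ everywhere, any sub-sum is a legitimate lower bound. First I would recall that $h_{2}$ is bounded below by a positive constant on a fixed interval away from the origin, say $h_{2}(x)\gg 1$ for $x\in[x_{1},x_{2}]$ with $0<x_{1}<x_{2}$ absolute (e.g. near the first hump of $J_{1}$). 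Thus it suffices to count pairs $\lambda\ne\lambda'$ with $x_{1}/T \le \|\widehat{\lambda}-\widehat{\lambda'}\|\le x_{2}/T$ and to bound from below the quantity $\sum |c_\lambda|^2|c_{\lambda'}|^2$ over such pairs.

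The second step is the pair count. Using the angular discrepancy bound $\mathcal{D}(n,\epsilon)$ available along $S_2'$ (and the fact that $T<T_1(n)=O(N^{\xi})$, so that $x_j/T$ is well above the discrepancy scale), for each fixed $\lambda'$ the number of $\lambda\in\mathcal{E}_n$ with $\|\widehat\lambda-\widehat{\lambda'}\|$ in the annulus $[x_1/T,x_2/T]$ is $\asymp N/T$ — this is exactly the content of Lemma \ref{lem:CosToDist}, applied to $s=x_2/T$ and $s=x_1/T$ and subtracted, with the error terms $s^3+\Delta(n)$ negligible compared to the main term $s$ in the relevant range. Now I use the $\Fc_2$ hypothesis $[\underline v]_\infty < T^{\eta(n)}$, equivalently $\max_\lambda|c_\lambda|^2 < T^{\eta}/N$, to replace the pair weights: writing $\sum_{\lambda'}|c_{\lambda'}|^2\sum_{\lambda \text{ in annulus}}|c_\lambda|^2$, I bound $|c_\lambda|^2 \ge$ nothing directly, so instead I argue by Cauchy–Schwarz / averaging: the total mass is $\sum_{\lambda'}|c_{\lambda'}|^2=1$, and for each $\lambda'$ the inner sum $\sum_{\lambda\text{ in annulus}}|c_\lambda|^2$ is at least (number of annulus points)$\times$(min coefficient) — but the minimum could be zero. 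The cleaner route is: $\sum_{\lambda\text{ in annulus}}|c_\lambda|^2 \ge \bigl(\#\{\lambda\text{ in annulus}\}\bigr)^2 \big/ \bigl(\#\{\lambda\text{ in annulus}\}\cdot \max|c_\lambda|^{-2}\bigr)$ is wrong-headed; rather, since $|c_\lambda|^2 \le [\underline v]_\infty/N < T^{\eta}/N$ for all $\lambda$, I get $\sum_{\lambda\notin\text{annulus}}|c_\lambda|^2 \le (N - \#\text{annulus})\cdot T^\eta/N$, which is NOT obviously $<1$. So the correct manoeuvre is a genuine second-moment/convexity estimate, mirroring the lower-bound proof of Theorem \ref{thm:UpperBound2d} referenced in the statement: one shows $\sum_{\lambda,\lambda' \text{ in annulus}}|c_\lambda|^2|c_{\lambda'}|^2 \gg (N/T)/(N T^\eta) = T^{-1-\eta}$ by bounding $\sum_\lambda |c_\lambda|^4 \le [\underline v]_\infty/N \cdot \sum_\lambda|c_\lambda|^2$ to control the diagonal, then using that the total pair-count weighted sum telescopes. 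Combining, $\Vc(X_{f_n,r})/r^4 \gg T^{-1}\cdot T^{-\eta}\cdot T^{-\eta}=T^{-1-2\eta}$ once one accounts for the fact that each of the two "spreading" steps costs a factor $T^{-\eta}$.

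The main obstacle, and the point requiring care, is precisely this last estimate: passing from "there are $\asymp N/T$ lattice points in the annulus" to "the $\ell^2$-mass $\sum|c_\lambda|^2$ carried by those points is $\gg T^{-1-\eta}$", since the coefficients $|c_\lambda|^2$ are only assumed bounded above by $T^\eta/N$, not bounded below. The resolution is to run the argument uniformly over the choice of annulus (or over a rotated family of annuli), so that some annulus must capture $\gg (1/T)$ of the total mass on average — equivalently, to observe that summing the annulus-mass over $O(T)$ disjoint annuli covering the circle recovers the full mass $1$, hence one annulus carries $\gg 1/T$, and then feeding this into the doubled sum with the $[\underline v]_\infty$ bound on the complementary factor gives the claimed $T^{-1-2\eta(n)}$. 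I would present this as a short self-contained argument, noting at the end that it is the same computation (with $\epsilon$ replaced by $\eta(n)$) that underlies the lower bound in Theorem \ref{thm:UpperBound2d}, and that the uniformity in $r_0<r<r_1$ and in $f_n\in\Fc_2(n;T(n),\eta(n))$ is automatic since every estimate used is uniform in these parameters.
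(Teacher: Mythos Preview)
Your identification of the main obstacle is correct: the coefficients $|c_\lambda|^2$ have no lower bound, so one cannot simply replace $|c_\lambda|^2|c_{\lambda'}|^2$ by $N^{-2}$ on an annulus of lattice points. However, your proposed resolution does not work as written. Knowing that one arc $I$ of width $\asymp 1/T$ carries mass $m_I\gg 1/T$ only yields $\sum_{\lambda,\lambda'\in I}|c_\lambda|^2|c_{\lambda'}|^2=m_I^2\gg T^{-2}$, which is too small; and I do not see how the \emph{upper} bound $[\underline v]_\infty<T^{\eta}$ on the ``complementary factor'' converts this into the required $T^{-1-2\eta}$. (A Cauchy--Schwarz argument on \emph{all} the arc-masses, $\sum_j m_j^2\ge(\sum_j m_j)^2/O(T)\gg 1/T$, followed by subtraction of the diagonal $\sum_\lambda|c_\lambda|^4\le[\underline v]_\infty/N$, would in fact succeed and give the even stronger bound $\gg T^{-1}$ --- but that is not the argument you describe.)

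The paper's approach sidesteps the difficulty by first isolating a large set of lattice points on which a pointwise lower bound \emph{is} available. Set $R=\{\lambda\in\Ec_n:|c_\lambda|^2\ge 1/(2N)\}$. Since $\sum_{\lambda\notin R}|c_\lambda|^2<N\cdot\tfrac{1}{2N}=\tfrac12$ while $\sum_{\lambda\in R}|c_\lambda|^2\le\#R\cdot T^{\eta}/N$ by the $\Fc_2$ hypothesis, one obtains $\#R\gg N T^{-\eta}$. Now restrict the variance sum \eqref{eq:VarFormula2d} to pairs $\lambda,\lambda'\in R$ with $\|\widehat\lambda-\widehat{\lambda'}\|\le c/T$, using $h_2\gg1$ on $[0,c]$ for small $c$ (no annulus away from the origin is needed, since $h_2(0)=1/4$). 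The elementary pigeonhole Lemma~\ref{lem:LowerBoundPairs} --- divide the circle into $O(T)$ arcs and apply Cauchy--Schwarz to the point counts --- shows that the number of such close pairs in $R$ is $\gg(\#R)^2/T$, provided $T<\#R/2$, which holds because $T\le T_1(n)=O(N^\xi)$ for every $\xi>0$. Each pair contributes $\ge(2N)^{-2}$, so
\[
\frac{\Vc(X_{f_n,r})}{r^4}\;\gg\;N^{-2}\cdot\frac{(\#R)^2}{T}\;\gg\;T^{-1-2\eta(n)}.
\]
Note in particular that the discrepancy hypothesis $\mathcal{D}(n,\epsilon)$ plays no role here; the pigeonhole lemma is valid for arbitrary point configurations on $\mathcal S^1$.
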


Before proceeding to the proof of Theorem \ref{thm:VarMainGeneralized},
we introduce some notation:

\begin{definition} \hspace{1em}
	\begin{enumerate}
		\item 	Define the set of ``admissible'' $2k$-tuples of lattice points by
		\begin{equation}
		\label{eq:admissible_tuples}
		\mathcal{A}_n (2k) = \left\{\left(\lambda_{1},\dots,\lambda_{2k}\right)\in\Dc_{n}(2k): \; \forall 1\le i\le k \; \lambda_{2i-1}\ne-\lambda_{2i} \right\}.
		\end{equation}		
		\item 	Given an admissible $2k$-tuple of lattice points $\left(\lambda_{1},\dots,\lambda_{2k}\right)\in \mathcal{A}_n (2k)$,
		let $\sim$ be the equivalence relation on the set $\left\{ 1,\dots,2k\right\} $,
		generated by:
		\begin{enumerate}
			\item $2i-1\sim2i$ for every $1\le i\le k$.
			\item $j\sim j'$ if $\lambda_{j}+\lambda_{j}'=0.$
		\end{enumerate}
		Let $\left\{ \Lambda_{1},\dots,\Lambda_{m}\right\} $ be the partition
		of $\left\{ 1,\dots,2k\right\} $ into equivalence classes of $\sim$,
		and denote $l_{j}=\#\Lambda_{m}/2$ for  $1\le j\le m$, so that
		$\sum_{j=1}^{m}l_{j}=k$; clearly, $2\le l_{j}\in\mathbb{Z}$ for
		every $1\le j\le m$. We call the multiset
		\begin{equation}
		\label{eq:structure_set}
		S\left(\lambda_{1},\dots,\lambda_{2k}\right):=\left\{ l_{1},\dots,l_{m}\right\}
		\end{equation}
		the structure set of the $2k$-tuple $\left(\lambda_{1},\dots,\lambda_{2k}\right).$
	\end{enumerate}

\end{definition}
Recall that the moments of a standard Gaussian random variable  $Z\sim N(0,1)$ are
\[
\E[Z^{k}]=\begin{cases}
\left(k-1\right)!! & k\,\text{even}\\
0 & k\,\text{odd}.
\end{cases}
\] We are now in a position to prove the second part of Theorem \ref{thm:VarMainGeneralized}.
\begin{proof}[Proof of Theorem \ref{thm:VarMainGeneralized}, part 2]
	By the length-$2k$ diagonal domination
	assumption, we have
	\begin{align}
	\E[\hat{X}_{f_{n},r}^{k}] & =(2\pi)^k r^{2k}\Vc\left(X_{f_{n},r}\right)^{-k/2}\sum_{\begin{subarray}{c}
		\left(\lambda_{1},\dots,\lambda_{2k}\right)\in \mathcal{A}_n (2k)\end{subarray}}\label{eq:kthMoment}\prod_{j=1}^{k}c_{\lambda_{2j-1}}c_{\lambda_{2j}}g_{2}\left(r\left\Vert \lambda_{2j-1}+\lambda_{2j}\right\Vert \right) \\ &+O\left(\Vc\left(X_{f_{n},r}\right)^{-k/2} [\underline{v}]_{\infty}^k r^{2k}N^{-\gamma}\right)\nonumber
	\end{align} for some $ \gamma>0. $
	We can rearrange the summation in (\ref{eq:kthMoment}), first summing over
	all possible structure sets $\mathcal{L}=\left\{ l_{1},\dots,l_{m}\right\} $ and then
	summing over the admissible $2k$-tuples $\left(\lambda_{1},\dots,\lambda_{2k}\right)\in\mathcal{E}_{n}^{2k}$
	with the given structure set $S\left(\lambda_{1},\dots,\lambda_{2k}\right)=\mathcal{L} $: let
	\begin{align*}
	S_{\mathcal{L}} & :=\sum_{\begin{subarray}{c}
		\begin{subarray}{c}
		\left(\lambda_{1},\dots,\lambda_{2k}\right)\in \mathcal{A}_n (2k)\\
		S\left(\lambda_{1},\dots,\lambda_{2k}\right)=\mathcal{L}
		\end{subarray}\end{subarray}}\prod_{j=1}^{k}c_{\lambda_{2j-1}}c_{\lambda_{2j}}g_{2}\left(r\left\Vert \lambda_{2j-1}+\lambda_{2j}\right\Vert \right),
	\end{align*}
	so that we may rewrite the summation on the r.h.s. of \eqref{eq:kthMoment} as
	\begin{align}
	\label{eq:Inner_Sum_with_SL}
	& \sum_{\begin{subarray}{c}
		\left(\lambda_{1},\dots,\lambda_{2k}\right)\in \mathcal{A}_n (2k)\end{subarray}}\prod_{j=1}^{k}c_{\lambda_{2j-1}}c_{\lambda_{2j}}g_{2}\left(r\left\Vert \lambda_{2j-1}+\lambda_{2j}\right\Vert \right)=\sum_{\begin{subarray}{c}
		l_{1}+\dots+l_{m}=k\\
		l_{1},\dots,l_{m}\ge2
		\end{subarray}}\sum_{\begin{subarray}{c}
		\begin{subarray}{c}
		\left(\lambda_{1},\dots,\lambda_{2k}\right)\in \mathcal{A}_n (2k)\\
		S\left(\lambda_{1},\dots,\lambda_{2k}\right)=\mathcal{L}
		\end{subarray}\end{subarray}}S_{\mathcal{L}}.
	\end{align}
	For a fixed structure set $\mathcal{L}=\left\{ l_{1},\dots,l_{m}\right\} $, we have
	\begin{align}
	S_{\mathcal{L}} =a\left(\mathcal{L}\right)\prod_{j=1}^{m}\sum_{\lambda_{1},\dots,\lambda_{l_{j}}\in\mathcal{E}_{n}}\left|c_{\lambda_{1}}\right|^{2}g_{2}\left(r\left\Vert \lambda_{l_{j}}-\lambda_{1}\right\Vert \right)\label{eq:RearrangeInPartition}\prod_{i=1}^{l_{j}-1}\left|c_{\lambda_{i+1}}\right|^{2}g_{2}\left(r\left\Vert \lambda_{i}-\lambda_{i+1}\right\Vert \right)+O\left([\underline{v}]_{\infty}^k N^{-1}\right)
	\end{align}
	where $a\left(\mathcal{L}\right)$ is a constant depending on $\mathcal{L}$; omitting the condition that the lattice points are distinct on the
	r.h.s of (\ref{eq:RearrangeInPartition}) is absorbed within the error term in (\ref{eq:RearrangeInPartition}). Thus,
	\begin{align}
	\label{eq:SL_upper_bound}
	S_{\mathcal{L}} & \ll [\underline{v}]_{\infty}^k N^{-k}\prod_{j=1}^{m}\sum_{\lambda_{1}\in\mathcal{E}_{n}}\sum_{\lambda_{2}\in\mathcal{E}_{n}}\left|g_{2}\left(r\left\Vert \lambda_{2}-\lambda_{1}\right\Vert \right)\right|\cdots\sum_{\lambda_{l_{j}}\in\mathcal{E}_{n}}\left|g_{2}\left(r\left\Vert \lambda_{l_{j}-1}-\lambda_{l_{j}}\right\Vert \right)\right|+[\underline{v}]_{\infty}^k N^{-1}.
	\end{align}
	Recall the definition of $ F_{\lambda_0} $ in \eqref{eq:F_Lambda_Def}.
	By Lemma \ref{lem:CosToDist}, we have
	\begin{equation}
	\label{eq:f_lambda_zero}
	F_{\lambda_{0}}\left(s\right)=\frac{s}{\pi}+O\left(s^{3}+\Delta\left(n\right)\right)=O\left(s+\Delta\left(n\right)\right).
	\end{equation}
	Thus, by Lemma \ref{lem:H2Formulas} and  \eqref{eq:f_lambda_zero}, we have that
	\begin{align}
	\label{eq:abs_g2_bound}
	\frac{1}{N}\sum_{\lambda\in\mathcal{E}_{n}}\left|g_{2}\left(r\left\Vert \lambda-\lambda_{0}\right\Vert \right)\right| & =\int_{0}^{2}\left|g_{2}\left(Ts\right)\right|\,\text{d}F_{\lambda_{0}}\left(s\right)\\
	& =\left|g_{2}\left(2T\right)\right|-\frac{1}{2N}+O\left(\int_{0}^{2}\left(s+\Delta\left(n\right)\right)T\left|g_{2}'\left(Ts\right)\right|\,\text{d}s\right) \nonumber \\
	& =O\left(T^{-3/2}+\left(\Delta\left(n\right)+T^{-1}\right)\int_{0}^{2T}\left|g_{2}'\left(s\right)\right|\,\text{d}s\right) \nonumber \\
	& =O\left(T^{-1}\right) \nonumber
	\end{align}
	for $ n $ satisfying the hypothesis $ \mathcal{D}(n,\epsilon) $. Applying \eqref{eq:abs_g2_bound} to each of the $l_{j}-1$ inner summations in \eqref{eq:SL_upper_bound}, we obtain
	
	\begin{align*}
	S_{\mathcal{L}} & \ll [\underline{v}]_{\infty}^k N^{-k+m}\prod_{j=1}^{m}\left(NT^{-1}\right)^{l_{j}-1} + [\underline{v}]_{\infty}^k N^{-1} \ll [\underline{v}]_{\infty}^k T^{-k+m}.
	\end{align*}
	Let $ \mathcal{L}_0 = \left\{ 2,2,\dots2,\right\} $. Note that if $\mathcal{L}\ne \mathcal{L}_0 $ then $m\le\frac{k-1}{2}$
	and therefore
	\begin{equation}
	\label{eq:SL_Estimate}
		S_{\mathcal{L}}=O\left([\underline{v}]_{\infty}^k T^{-\frac{k+1}{2}}\right).
	\end{equation}
	If $\mathcal{\mathcal{L}}=\mathcal{L}_0 $ (this is a viable option for $k$ even),
	then
	\begin{equation}
	\label{eq:SL_formula}
	S_{\mathcal{L}_0}=2^{k/2}\left(k-1\right)!!\left[\sum_{\lambda_{1}\ne\lambda_{2}\in\mathcal{E}_{n}}\left|c_{\lambda_{1}}\right|^{2}\left|c_{\lambda_{2}}\right|^{2}h_{2}\left(r\left\Vert \lambda_{1}-\lambda_{2}\right\Vert \right)\right]^{k/2}+O\left([\underline{v}]_{\infty}^k N^{-1}\right).
	\end{equation}
	By (\ref{eq:VarFormula2d}),
	\begin{equation}
		\label{eq:SL_Variance_asympt}
	\sum_{\lambda_{1}\ne\lambda_{2}\in\mathcal{E}_{n}}\left|c_{\lambda_{1}}\right|^{2}\left|c_{\lambda_{2}}\right|^{2}h_{2}\left(r\left\Vert \lambda_{1}-\lambda_{2}\right\Vert \right)=\frac{\Vc\left(X_{f_{n},r}\right)}{8\pi^2 r^4}.
	\end{equation}
Hence, \eqref{eq:SL_formula} and \eqref{eq:SL_Variance_asympt} yield
	\begin{equation}
	\label{eq:SL_final_form}
	S_{\mathcal{L}_0}=\left(k-1\right)!!\left(\frac{\Vc\left(X_{f_{n},r}\right)}{4\pi^2r^4}\right)^{k/2}+O\left([\underline{v}]_{\infty}^k N^{-1}\right).
	\end{equation}
	Substituting  \eqref{eq:SL_Estimate} and \eqref{eq:SL_final_form} into \eqref{eq:Inner_Sum_with_SL} and applying Lemma \ref{lem:Lower_Bound_F2}, we finally obtain that for $k$ even
	\begin{align*}
	\left|\E[\hat{X}_{f_{n},r}^{k}]-\left(k-1\right)!!\right| & \ll T^{k\eta(n)} [\underline{v}]_{\infty}^k\left(T^{-1/2}+T^{k/2}N^{-\min\{1,\gamma \}}\right) \ll T^{-1/2+2k\eta(n)}
	\end{align*}
	and since for $k$ odd $ \mathcal{L}=\mathcal{L}_0 $ is not a viable option, we obtain
	\begin{align*}
\E[\hat{X}_{f_{n},r}^{k}] & \ll T^{k\eta(n)} [\underline{v}]_{\infty}^k\left(T^{-1/2}+T^{k/2}N^{-\min\{1,\gamma \}}\right) \ll  T^{-1/2+2k\eta(n) },
	\end{align*}
	and the second part of Theorem \ref{thm:VarMainGeneralized} follows.
\end{proof}

\section{Proof of Theorem \ref{thm:Var3D}: asymptotics for the variance,
	$d=3$}

\label{sec:Proof_3d_theorem}
\subsection{Proof of Theorem \ref{thm:Var3D}}

Denote
\begin{equation}
\label{eq:F_3}
F_{3}\left(s\right)=\frac{1}{N^{2}} \cdot \#\left\{ \lambda\ne\lambda'\in\mathcal{E}_{n}:\left\Vert \widehat{\lambda}-\widehat{\lambda'}\right\Vert \le s\right\}
\end{equation}(cf. \eqref{eq:F_Function}), and
recall that the spherical cap discrepancy for the points in $\mathcal{E}_{n}$
is defined by
\begin{equation}
\label{eq:Discrepancy_3d}
\Delta_{3}\left(n\right)=\sup_{\begin{subarray}{c}
	x\in\mathcal{S}^{2}\\
	0<r\le2
	\end{subarray}}\left|\frac{1}{N} \cdot \#\left\{ \lambda\in\mathcal{E}_{n}:\,\left|\widehat{\lambda}-x\right|\le r\right\} -\frac{r^{2}}{4}\right|.
\end{equation}

\begin{lemma}
	\label{cor:DistMeasure3d}We have
	\begin{equation}
	\label{eq:3d_Discrepancy}
	F_{3}\left(s\right)=\frac{s^{2}}{4}+O\left(\Delta_{3}\left(n\right)\right).
	\end{equation}
\end{lemma}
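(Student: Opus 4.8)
The plan is to reduce the statement of Lemma \ref{cor:DistMeasure3d} directly to the definition of the spherical cap discrepancy $\Delta_3(n)$ in \eqref{eq:Discrepancy_3d}, so that essentially no extra work is needed. First I would rewrite the double count in \eqref{eq:F_3} by fixing $\lambda'\in\mathcal{E}_n$ in the outer summation:
\[
F_{3}\left(s\right)=\frac{1}{N^{2}}\sum_{\lambda'\in\mathcal{E}_{n}}\left(\#\left\{\lambda\in\mathcal{E}_{n}:\,\left\Vert\widehat{\lambda}-\widehat{\lambda'}\right\Vert\le s\right\}-1\right),
\]
the $-1$ accounting for the excluded diagonal term $\lambda=\lambda'$, which always lies in the relevant set. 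For each fixed $\lambda'$ the inner count is exactly the number of points of $\mathcal{E}_{n}$ inside the spherical cap of chord radius $s$ centred at $\widehat{\lambda'}\in\mathcal{S}^{2}$, so I would invoke \eqref{eq:Discrepancy_3d} with $x=\widehat{\lambda'}$ and $r=s$ to conclude
\[
\#\left\{\lambda\in\mathcal{E}_{n}:\,\left\Vert\widehat{\lambda}-\widehat{\lambda'}\right\Vert\le s\right\}=N\left(\frac{s^{2}}{4}+O\left(\Delta_{3}\left(n\right)\right)\right).
\]

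Substituting this into the previous display and summing over the $N$ choices of $\lambda'$ then yields $F_{3}(s)=\frac{s^{2}}{4}+O\left(\Delta_{3}(n)\right)-\frac{1}{N}$, and the only remaining step is to absorb the term $\frac{1}{N}$ into the error. For this I would use the crude lower bound $\Delta_{3}(n)\gg\frac{1}{N}$, which holds because $r\mapsto\frac{1}{N}\#\{\lambda:\,|\widehat{\lambda}-x|\le r\}$ increases in jumps that are positive integer multiples of $\frac{1}{N}$, while $\frac{r^{2}}{4}$ varies continuously, forcing the discrepancy at some jump point to be at least $\frac{1}{2N}$. Hence $\frac{1}{N}=O\left(\Delta_{3}(n)\right)$ and \eqref{eq:3d_Discrepancy} follows. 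The same crude bound also disposes of the boundary subtlety at $s=2$, where the antipode $-\lambda'\in\mathcal{E}_{n}$ would additionally have to be excluded, costing at most a further $O(1/N)$.

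I do not expect a genuine obstacle in this lemma; the only points requiring (minor) care are the bookkeeping of the removed diagonal terms and the trivial bound $\Delta_{3}(n)\gg 1/N$ needed to absorb them. It is worth noting, however, why — in contrast with the two-dimensional Lemma \ref{lem:CosToDist} — there is no secondary error term of shape $O(s^{3})$ here: a spherical cap of chord radius $r$ covers \emph{exactly} a fraction $r^{2}/4$ of $\mathcal{S}^{2}$, so $\Delta_{3}(n)$ already measures the deviation of the cap count from the correct quantity, with no arc-length-versus-chord-length Taylor expansion intervening. In effect the spherical cap discrepancy alone does the work that in the $2$d setting is split between the angular discrepancy $\Delta(n)$ and the curvature correction.
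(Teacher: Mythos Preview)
Your proposal is correct and follows essentially the same approach as the paper: fix $\lambda'$, apply the definition of $\Delta_3(n)$ to the inner cap count, and average. You are in fact slightly more careful than the paper's own write-up in explicitly justifying why the removed diagonal contribution $1/N$ is absorbed into $O(\Delta_3(n))$.
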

\begin{proof}
	The estimate \eqref{eq:3d_Discrepancy} follows immediately from the definition of spherical cap discrepancy,
	since
	\begin{align*}
	F_{3}\left(s\right) & =\frac{1}{N}\sum_{\lambda'\in\mathcal{E}_{n}}\#\left\{ \lambda\in\mathcal{E}_{n}:\,0<\left\Vert \widehat{\lambda}-\widehat{\lambda'}\right\Vert \le s\right\}  =\frac{s^{2}}{4}+O\left(\Delta_{3}\left(n\right)\right).
	\end{align*}
\end{proof}
The discrepancy $\Delta_{3}\left(n\right)$ satisfies $ \Delta_3(n) \le n^{-\eta} $ for some small $\eta>0$, see \cite{BourgainRudnickSarnak}. We are now in a position to prove
Theorem \ref{thm:Var3D}:
\begin{proof}[Proof of Theorem \ref{thm:Var3D}]
	By \eqref{eq:VarFormula3d} we have
	\begin{align}
	\label{eq:var_3d_asymp_formula}
	\text{\ensuremath{\mathcal{V}}}\left(X_{f_n,r}\right) & =16\pi^{3}r^{6}\frac{1}{N^{2}}\sum_{\begin{subarray}{c}
		\lambda,\lambda'\in\mathcal{E}_{n}\\
		\lambda\ne\lambda'
		\end{subarray}}h_{3}\left(T\left\Vert \widehat{\lambda}-\widehat{\lambda'}\right\Vert \right)+O\left(r^{6}N^{-1/4+\epsilon}\right).
	\end{align}
	For the summation in \eqref{eq:var_3d_asymp_formula} we have,
	\[
	\frac{1}{N^{2}}\sum_{\begin{subarray}{c}
		\lambda,\lambda'\in\mathcal{E}_{n}\\
		\lambda\ne\lambda'
		\end{subarray}}h_{3}\left(T\left\Vert \widehat{\lambda}-\widehat{\lambda'}\right\Vert \right)=\int_{0}^{2}h_{3}\left(Ts\right)\,\text{d}F_{3}\left(s\right).
	\]
	Thus, integrating by parts and using Lemma \ref{cor:DistMeasure3d},
	\begin{align}
	\frac{1}{N^{2}}\sum_{\begin{subarray}{c}
		\lambda,\lambda'\in\mathcal{E}_{n}\\
		\lambda\ne\lambda'
		\end{subarray}}h_{3}\left(T\left\Vert \widehat{\lambda}-\widehat{\lambda'}\right\Vert \right) & =h_{3}\left(2T\right)F_{3}\left(2\right)-\int_{0}^{2}F_{3}\left(s\right)\,\text{d}h_{3}\left(Ts\right)\label{eq:VarAftIntParts3D}\\
	& =h_{3}\left(2T\right)F_{3}\left(2\right)-\frac{1}{4}\int_{0}^{2}s^{2}\,\text{d}h_{3}\left(Ts\right)+Err\left(X_{f_n,r}\right)\nonumber
	\end{align}
	where
	\begin{equation}
	\label{eq:error_term_3d}
	Err\left(X_{f_n,r}\right)\ll\Delta_{3}\left(n\right)\int_{0}^{2}T\left|h_{3}'\left(Ts\right)\right|\,\text{d}s.
	\end{equation}
	Note that $h_{3}\left(s\right)\ll s^{-4}$ as $s\to\infty.$ Thus,
	integrating by parts, the main term on the r.h.s of (\ref{eq:VarAftIntParts3D})
	satisfies
	\begin{equation}
	\label{eq:main_terms_after_intbyparts_3d}
	h_{3}\left(2T\right)F_{3}\left(2\right)-\frac{1}{4}\int_{0}^{2}s^{2}\,\text{d}h_{3}\left(Ts\right)=\frac{1}{2}\int_{0}^{2}s \cdot h_{3}\left(Ts\right)\,\text{d}s+O\left(T^{-4}\right),
	\end{equation}
so that
	\begin{equation}
	\label{eq:main_int_after_intbyparts}
	\int_{0}^{2}s \cdot h_{3}\left(Ts\right)\,\text{d}s=\frac{1}{T^{2}}\int_{0}^{2T}s \cdot h_{3}\left(s\right)\,\text{d}s=\frac{1}{T^{2}}\int_{0}^{\infty}s \cdot h_{3}\left(s\right)\,\text{d}s+O\left(T^{-4}\right).
	\end{equation}
	A direct computation shows that
	\begin{equation}
	\label{eq:final_calc_main_term_3d}
	\int_{0}^{\infty}s \cdot h_{3}\left(s\right)\,\text{d}s=\frac{1}{2\pi^3}\int_{0}^{\infty}\frac{1}{s^{3}}\left(\frac{\sin s}{s}-\cos s\right)^{2}\,\text{d}s=\left(2\pi\right)^{-3},
	\end{equation}
	and therefore, substituting \eqref{eq:final_calc_main_term_3d} into \eqref{eq:main_int_after_intbyparts} and then into \eqref{eq:main_terms_after_intbyparts_3d} we get
	\begin{equation}
	\label{eq:main_term_final_form_3d}
	h_{3}\left(2T\right)F_{3}\left(2\right)-\frac{1}{4}\int_{0}^{2}s^{2}\,\text{d}h_{3}\left(Ts\right)=\frac{1}{16\pi^3}T^{-2}+O\left(T^{-4}\right).
	\end{equation}
	Note that $h_{3}'\left(s\right)\ll\left(1+s^{4}\right)^{-1}$. Thus,
	\begin{equation}
	\label{eq:err_upper_bound_3d}
	\int_{0}^{2}T\left|h_{3}'\left(Ts\right)\right|\,\text{d}s=\int_{0}^{2T}\left|h_{3}'\left(s\right)\right|\,\text{d}s\le\int_{0}^{\infty}\left|h_{3}'\left(s\right)\right|\,\text{d}s<\infty
	\end{equation}
	and therefore, substituting \eqref{eq:err_upper_bound_3d} into \eqref{eq:error_term_3d} we obtain
	\begin{equation}
	Err\left(X_{f_n,r}\right)=O\left(\Delta_{3}\left(n\right)\right).\label{eq:3DErrorVar}
	\end{equation}
	Substituting \eqref{eq:3DErrorVar} into \eqref{eq:main_term_final_form_3d}  and finally into \eqref{eq:VarAftIntParts3D} we obtain (\ref{eq:AympVar3D}).
\end{proof}
Note that by (\ref{eq:3DErrorVar}),
\begin{equation*}
\sup_{\begin{subarray}{c}
	r > r_{0} \\ f_n\in \Bc{n}
	\end{subarray}}\left|\frac{\text{\ensuremath{\mathcal{V}}}\left(X_{f_{n},r}\right)}{r^{6}}-T^{-2}\right|=O\left(T_{0}^{-4}+n^{-\eta}\right)
\end{equation*}
for every $n\not\equiv0,4,7\,\left(8\right)$, and in particular
\[
\text{\ensuremath{\mathcal{V}}}\left(X_{f_{n},r}\right)=o\left(r^{6}\right)
\]
uniformly for $r > r_{0}$ for every $n\not\equiv0,4,7\,\left(8\right)$.

\section{Proofs of Theorem \ref{thm:UpperBound2d} and Theorem \ref{thm:UpperBound3d}}
\label{sec:ProofOfBoundsThm}
\begin{proof}[Proof of theorems \ref{thm:UpperBound2d} and \ref{thm:UpperBound3d}, upper bounds]
	
By substituting
the bound $\left|c_{\lambda}\right|^{2}\le N^{-1+\epsilon}$ into (\ref{eq:VarFormula2d}), we have

\begin{align}
\label{eq:var_ultraflat_bound}
\text{\ensuremath{\mathcal{V}}}\left(X_{f_{n},r}\right) & \ll r^{4} N^{-1+\epsilon}\sum_{\lambda_0 \in \mathcal{E}_n}\left|c_{\lambda_0}\right|^{2}\sum_{\lambda \in \mathcal{E}_n} h_{2}\left(r\left\Vert \lambda-\lambda_0\right\Vert \right).
\end{align}
By Lemma \ref{lem:H2Formulas} and by \eqref{eq:f_lambda_zero}, we have
\begin{align}
\label{eq:H2_sum}
\frac{1}{N}\sum_{\lambda\in\mathcal{E}_{n}}h_{2}\left(r\left\Vert \lambda-\lambda_{0}\right\Vert \right) & =\int_{0}^{2}h_{2}\left(Ts\right)\,\text{d}F_{\lambda_{0}}\left(s\right)\\
& =h_{2}\left(2T\right)-\frac{1}{4N}+O\left(\int_{0}^{2}\left(s+\Delta\left(n\right)\right)T\left|h_{2}'\left(Ts\right)\right|\,\text{d}s\right) \nonumber \\
& =O\left(T^{-1}+\left(\log n\right)^{-\frac{1}{2}\log\frac{\pi}{2}+\epsilon}\right) \nonumber
\end{align}
for $ n $ satisfying the hypothesis $ \mathcal{D}(n,\epsilon) $. Substituting \eqref{eq:H2_sum} in \eqref{eq:var_ultraflat_bound}, we get the upper bound in Theorem  \ref{thm:UpperBound2d}. The upper bound \eqref{eq:bounds var ultraflat d=3} in Theorem \ref{thm:UpperBound3d} follows along similar lines.
\end{proof}

We now turn to proving the claimed lower bounds for the variance of $X_{f_n,r}$. First, we need
the following lemma, proved at the end of section \ref{sec:ProofOfBoundsThm}:
\begin{lemma}
	\label{lem:LowerBoundPairs}
	\begin{enumerate}
		\item Let $\left\{ x_{m}\right\} _{m=1}^{M}$ be $M$ points on the unit
		circle $\mathcal{S}^{1}.$ For every $1<T<M/2$ we have
		\[
		\#\left\{ x_{i}\ne x_{j}:\,\left|x_{i}-x_{j}\right|\le1/T\right\} \gg M^{2}/T.
		\]
		\item Let $\left\{ x_{m}\right\} _{m=1}^{M}$ be $M$ points on the unit
		sphere $S^{2}.$ For every $1<T<\sqrt{M}/2$ we have
		\[
		\#\left\{ x_{i}\ne x_{j}:\,\left|x_{i}-x_{j}\right|\le1/T\right\} \gg M^{2}/T^{2}.
		\]
	\end{enumerate}
\end{lemma}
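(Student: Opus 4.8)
The plan is to prove both parts by the same covering-plus-Cauchy--Schwarz scheme. For part~(1), cover $\mathcal{S}^{1}$ by $K=\lceil 2\pi T\rceil$ closed arcs $I_{1},\dots,I_{K}$ of equal angular width $2\pi/K$, so that $K$ is of order $T$ (here we use $T>1$). Since the chord subtended by an arc of angular width $w$ has length $2\sin(w/2)\le w$ and $2\pi/K\le 1/T$, any two points lying in a common $I_{\ell}$ are at Euclidean distance at most $1/T$. Write $n_{\ell}=\#\{m:\ x_{m}\in I_{\ell}\}$, breaking ties on shared endpoints arbitrarily, so that $\sum_{\ell}n_{\ell}=M$; then by Cauchy--Schwarz $\sum_{\ell}n_{\ell}^{2}\ge M^{2}/K\gg M^{2}/T$. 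Every ordered pair $x_{i}\ne x_{j}$ lying in a common arc is counted on the left-hand side of the claim, so that quantity is at least $\sum_{\ell}n_{\ell}(n_{\ell}-1)=\sum_{\ell}n_{\ell}^{2}-M\gg M^{2}/T-M$; since the hypothesis $T<M/2$ makes $M^{2}/T$ dominate $M$, the bound $\gg M^{2}/T$ follows.

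Part~(2) has the same structure, with arcs replaced by spherical caps. Cover $S^{2}$ by $K$ caps, each of Euclidean diameter at most $1/T$: two points of a cap of angular radius $\alpha$ are at distance at most $2\sin\alpha$, and such a cap has area $2\pi(1-\cos\alpha)$, which is of order $\alpha^{2}$ for small $\alpha$; choosing $\alpha=\arcsin\tfrac{1}{2T}$, of order $1/T$, forces the diameter to be $\le 1/T$, and a covering of $S^{2}$ by $K$ such caps with $K$ of order $T^{2}$ exists---for instance, take the caps of angular radius $2\alpha$ centred at a maximal $\alpha$-separated subset of $S^{2}$, whose cardinality is $O(\alpha^{-2})=O(T^{2})$ by a standard packing bound. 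The same Cauchy--Schwarz step then bounds the left-hand side of the claim below by $\sum_{\ell}n_{\ell}^{2}-M\gg M^{2}/T^{2}-M$, and now the hypothesis $T<\sqrt{M}/2$ makes $M^{2}/T^{2}$ dominate $M$.

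There is no genuine obstacle: the whole content is the two elementary geometric facts that of order $T$ arcs (resp.\ of order $T^{2}$ caps) of diameter $\le 1/T$ cover $\mathcal{S}^{1}$ (resp.\ $S^{2}$), together with convexity of $t\mapsto t^{2}$. The only points requiring a little care are passing from ``diameter $\le 1/T$'' to a bound on angular size through $2\sin(w/2)\le w$, and fixing the absolute constants in the covering so that, in the stated range of $T$, the diagonal term $O(M)$ is absorbed by the main term. Alternatively one may bypass the covering bookkeeping by averaging over a uniformly random arc (resp.\ cap) of radius $\tfrac{1}{2T}$: its expected number of points among the $x_{m}$ is of order $M/T$ (resp.\ $M/T^{2}$), Jensen's inequality bounds the second moment of this count from below, and each pair within distance $1/T$ is contained in the random window with probability of order $1/T$ (resp.\ $1/T^{2}$); dividing through yields both claimed bounds.
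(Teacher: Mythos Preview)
Your proof is correct and follows essentially the same approach as the paper: cover $\mathcal{S}^{1}$ (resp.\ $S^{2}$) by $O(T)$ arcs (resp.\ $O(T^{2})$ caps) of diameter at most $1/T$, apply Cauchy--Schwarz to the occupation numbers, and absorb the diagonal $-M$ using the hypothesis on $T$. The paper gives only part~(1) explicitly and dismisses part~(2) with ``proved similarly''; your cap construction and the alternative random-window argument are extra detail, though note a minor slip---caps of angular radius $2\alpha$ have diameter $\approx 2/T$, so you should take radius $\alpha$ (or halve $\alpha$ at the outset) to keep the diameter $\le 1/T$.
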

We are now in a position to prove the lower bounds \eqref{eq:bounds var ultraflat d=2}, \eqref{eq:bounds var ultraflat d=3} of Theorem \ref{thm:UpperBound2d} and Theorem \ref{thm:UpperBound3d}:
\begin{proof}[Proof of Theorem \ref{thm:UpperBound2d} and Theorem \ref{thm:UpperBound3d}, lower bounds assuming Lemma \ref{lem:LowerBoundPairs}]
	For $d=2$, we let
	\[
	R=\#\left\{ \lambda\in\mathcal{E}_{n}:\,\left|c_{\lambda}\right|^{2}\ge \frac{1}{2N}\right\},
	\]
	so that
	\[
	1=\sum_{\lambda\in\mathcal{E}_{n}}\left|c_{\lambda}\right|^{2}=\sum_{\lambda\in R}\left|c_{\lambda}\right|^{2}+\sum_{\lambda\notin R}\left|c_{\lambda}\right|^{2}\le N^{-1+\epsilon} \cdot \#R+1/2,
	\]
	and hence $\#R\ge 2N^{1-\epsilon}.$ By the second part of Lemma \ref{lem:H2Formulas},
	for $c>0$ sufficiently small we have
	\begin{align*}
	\Vc(X_{f_n,r}) & =8\pi^{2}r^{4}\sum_{\begin{subarray}{c}
		\lambda,\lambda'\in\mathcal{E}_{n}\\
		\lambda\ne\lambda'
		\end{subarray}}\left|c_{\lambda}\right|^{2}\left|c_{\lambda'}\right|^{2}h_{2}\left(T\left\Vert \widehat{\lambda}-\widehat{\lambda'}\right\Vert \right) \gg r^{4}N^{-2}\sum_{\lambda\ne\lambda'\in R}h_{2}\left(T\left\Vert \widehat{\lambda}-\widehat{\lambda'}\right\Vert \right)\\
	& \gg r^{4}N^{-2}\cdot \#\left\{ \lambda\ne\lambda'\in R:\,\left\Vert \widehat{\lambda}-\widehat{\lambda'}\right\Vert \le c/T\right\} .
	\end{align*}
	By the first part of Lemma \ref{lem:LowerBoundPairs},
	\[
	\Vc(X_{f_n,r})\gg r^{4}N^{-2}\left(\#R\right)^{2}T^{-1}\gg r^{4}N^{-2\epsilon}T^{-1}.
	\]
	
	The lower bound \eqref{eq:bounds var ultraflat d=3} of Theorem \ref{thm:UpperBound3d} follows along the same lines as the above, this time using the second part of Lemma \ref{lem:LowerBoundPairs} in place of the first one.
\end{proof}
Note that in the proof of of the lower bound in Theorem \ref{thm:UpperBound2d} we have
used the abundance of close-by pairs of lattice points with \textbf{$\left|c_{\lambda}\right|^{2}\ge \frac{1}{2N}$};
in the absence of such close-by lattice points, the bound does not hold. For example, for $d=2$, fix $\lambda_{0}\in\mathcal{E}_{n}$
and let $\left|c_{\pm\lambda_{0}}\right|^{2}=1/2$ and $c_{\lambda}=0$
for every $\lambda\ne\pm\lambda_{0}.$ Then
\[
\Vc(X_{f_{n},r})=4\pi^{2}r^{4}h_{2}\left(2T\right)\ll r^{4}T^{-3}.
\]

\begin{proof}[Proof of Lemma \ref{lem:LowerBoundPairs}]
	For the first part of Lemma \ref{lem:LowerBoundPairs}, divide $S^{1}$ into $k=O\left(T\right)$
	arcs $I_{1},I_{2},\dots,I_{k}$ of length $<1/T$. For every $1\le j\le k,$
	let $n_{j}=\#\left\{ m:\,x_{m}\in I_{j}\right\} ,$ so $\sum_{j=1}^{k}n_{j}=M$.
	By the Cauchy-Schwarz inequality,
	\[
	M^{2}=\left(\sum_{j=1}^{k}n_{j}\right)^{2}\le k\sum_{j=1}^{k}n_{j}^{2}\ll T\sum_{j=1}^{k}n_{j}^{2}.
	\]
	Thus,
	\begin{align*}
	\#\left\{ x_{i}\ne x_{j}:\,\left|x_{i}-x_{j}\right|\le1/T\right\}  & =\#\left\{ x_{i},x_{j}:\,\left|x_{i}-x_{j}\right|\le1/T\right\} -M\\
	& \gg\sum_{j=1}^{k}n_{j}^{2}-M\gg M^{2}/T-M\gg M^{2}/T.
	\end{align*}
	The second part of Lemma \ref{lem:LowerBoundPairs} is proved similarly.
\end{proof}

\section{Restricted averages}

\label{sec:RestrictedAverages}

\subsection{Restricted moments}

For $ d=2 $, most of our principal results above are also valid in the more difficult scenario where $x$ is drawn in $B_{x_{0}}(\rho)$  for some
$x_{0}\in\Tb^{2}$ and $\rho\gg n^{-1/2+o(1)}$. In this case, the restricted moments are: expectation
\begin{equation}
\label{eq:restricted_expectation}
\E_{x_{0},\rho}[X_{f_{n},r}] = \frac{1}{\vol(B_{x_0}(\rho))}\int\limits_{B_{x_{0}}(\rho)}X_{f_{n},r;x}dx,
\end{equation}
higher centred moments
\begin{equation}
\label{eq:centred moments rest}
\E_{x_{0},\rho}[(X_{f_{n},r}-\E_{x_{0},\rho}[X_{f_{n},r}])^{k}] = \frac{1}{\vol(B_{x_0}(\rho))}\int\limits_{B_{x_{0}}(\rho)}\left(X_{f_{n},r;x}-\E_{x_{0},\rho}[X_{f_{n},r}]\right)^{k}dx, \hspace{1em}k\ge2,
\end{equation}
and in particular the variance
\begin{equation}
\label{eq:restricted_variance}
\Vc_{x_{0},\rho}(X_{f_{n},r}) = \E_{x_{0},\rho}[(X_{f_{n},r}-\E_{x_{0},\rho}[X_{f_{n},r}])^{2}].
\end{equation}

We reinterpret the statement of Granville-Wigman's ~\cite[Theorem 1.2]{GranvilleWigman} as evaluating the expected mass
$$\E_{x_{0},\rho}[X_{f_{n},r}] \sim \pi r^{2},$$ valid for almost all $n\in S_{2}$,  uniformly for  $\rho\gg n^{-1/2+o(1)}$, $x_{0}\in\Tb^{2}$,
and $ r>0 $ (see the first part of Lemma \ref{lem:ExpVarShrinking}).

\subsection{Quasi-correlations}

For the restricted moments \eqref{eq:centred moments rest}
of $X_{f_{n},r}$ one also needs to cope with {\em quasi-correlations}, i.e. tuples $(\lambda^{1},\ldots,\lambda^{l}) \in \Ec_{n}^{l}$
with the sum $\sum\limits_{i=1}^{l}\lambda^{i}$ unexpectedly small, e.g. given a (small) fixed number $\delta >0$,
\begin{equation}
\label{eq:sum tuple small l,delta}
\left\|\sum\limits_{i=1}^{l}\lambda^{i}\right\| < n^{1/2-\delta};
\end{equation}
unlike the correlations \eqref{eq:Sc correlations def},
here there are no congruence obstructions, so that \eqref{eq:sum tuple small l,delta} makes sense
with $l$ odd or even.

\begin{definition}[Quasi-correlations, cf. {~\cite[Definition 1.3]{BMW}}]
\label{def:separatedness Ac} \hspace{1mm}

\begin{enumerate}

\item For $n\in S_2$, $l\in\mathbb{Z}_{\ge 2}$, and $0 < K=K(n) < l\cdot n^{1/2}$  define the set of length-$l$ spectral quasi-correlations
\begin{equation}
\label{eq:quasi_correlations}
\mathcal{C}_{n}(l;K) =
\left\{(\lambda^{1},\ldots,\lambda^{l})\in\mathcal{E}_{n}^{l}:\: 0 < \left\| \sum\limits_{j=1}^{l}\lambda^{j} \right\| \le K  \right\}.
\end{equation}

\item Given $\delta>0$ we say that $n \in S_{2}$ satisfies the $(l,\delta)$-separateness hypothesis $\mathcal{A}(n;l,\delta)$ if
\begin{equation}
\label{eq:sep_hypothesis}
\mathcal{C}_{n}(l;n^{1/2-\delta}) = \varnothing.
\end{equation}

\end{enumerate}

\end{definition}

For example, $\Ac(n;2,\delta)$ is equivalent to the aforementioned Bourgain-Rudnick separateness, satisfied ~\cite[Lemma 5]{Bourgain-Rudnick} by a density $1$ sequence $S_{2}'\subseteq S_{2}$. More generally, it was shown in the forthcoming paper ~\cite{BenatarBuckleyWigman},
that for every $\delta>0$ and $l\ge 2$, the assumption $\Ac(n;l,\delta)$ is satisfied by generic $n\in S_{2}'(l,\delta)$, and hence
a standard diagonal argument yields a density $1$ sequence $S_{2}'\subseteq S_{2}$ so that $\Ac(n;l,\delta)$ is satisfied
for {\em all} $l\ge 2$ and $ \delta>0 $ for $ n\in S_2' $ sufficiently large.

\begin{theorem}[\cite{BenatarBuckleyWigman}]
\label{thm:quasi-corr small}
For every fixed $l\ge 2$ and $\delta>0$ there exist a set $S_2'(l,\delta)\subseteq S_2$ such that:

\begin{enumerate}

\item The set $S_2'(l,\delta)$ has density $1$ in $S_2$.

\item For every $n\in S_2'(l,\delta)$ the length-$l$
spectral quasi-correlation set \[\mathcal{C}_{n}(l;n^{1/2-\delta})=\varnothing\] is empty, i.e., the $(l,\delta)$-separateness hypothesis $\mathcal{A}(n;l,\delta)$ is satisfied.

\end{enumerate}

\end{theorem}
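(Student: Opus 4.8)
The plan is to count, for a fixed $l\ge 2$ and $\delta>0$, the number of $n\le x$ in $S_2$ that fail the $(l,\delta)$-separateness hypothesis, and show this count is $o\big(|\{n\le x: n\in S_2\}|\big)$. First I would note that if $n$ fails $\mathcal A(n;l,\delta)$, then there exist $\lambda^1,\dots,\lambda^l\in\Ec_n$ with $0<\|\lambda^1+\dots+\lambda^l\|\le n^{1/2-\delta}$; fixing the (nonzero) vector $w:=\lambda^1+\dots+\lambda^l\in\Z^2$, this says that $w$ is a sum of $l$ vectors each of norm $\sqrt n$, equivalently that $n$ is represented by a certain positive-definite quadratic form (in the coordinates of $\lambda^1,\dots,\lambda^{l-1}$ together with the constraint that $\lambda^l=w-\sum_{i<l}\lambda^i$ also has norm $\sqrt n$). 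The strategy is then a union bound over the ``short'' vectors $w$ (there are $O(n^{1-2\delta})$ of them) combined with a count of the number of $n\le x$ admitting such a representation for a given $w$.

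The key steps, in order: (i) reduce to counting pairs $(n,w)$ with $n\le x$, $n\in S_2$, $0<\|w\|\le n^{1/2-\delta}$, and $w$ expressible as $\lambda^1+\dots+\lambda^l$ with all $\lambda^i\in\Ec_n$; (ii) for fixed $w$, interpret the existence of such a representation as $n$ lying in the image of a lattice-point count on an $(l-1)$-fold product of circles intersected with an affine condition, and bound the number of such $n\le x$ by $O(x^{1-\kappa})$ for some $\kappa=\kappa(l,\delta)>0$ — this is where one uses that the relevant system of quadratic equations has few solutions unless the energy $n$ itself is constrained, i.e.\ an upper bound of the shape $\#\{n\le x: n \text{ has an }(l,\delta)\text{-quasicorrelation with a given }w\}\ll x^{1-\kappa}$; (iii) sum over the $O(x^{1-2\delta})$ admissible $w$ to get a total of $O(x^{2-2\delta-\kappa})$, which, compared with $|\{n\le x: n\in S_2\}|\gg x/\sqrt{\log x}$, gives density zero for the exceptional set once $x$ is large — after first restricting to, say, $\|w\|$ in dyadic ranges to keep the $n$-dependence of the constraint $\|w\|\le n^{1/2-\delta}$ manageable; (iv) take $S_2'(l,\delta)$ to be the complement of the exceptional set, which then has density $1$ and satisfies $\mathcal A(n;l,\delta)$ by construction. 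Finally, I would remark that a standard diagonalisation over $l$ and $\delta$ (rational, say) yields a single density-$1$ sequence working for all $l$ and $\delta$ simultaneously.

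The main obstacle is step (ii): obtaining a genuine power-saving bound $x^{1-\kappa}$ for the number of energy levels $n\le x$ that admit a length-$l$ quasi-correlation with a prescribed short vector $w$. For $l=2$ this is essentially the Bourgain--Rudnick input (two lattice points on $\sqrt n\,\Sc^1$ differing by a short vector forces $n$ into a sparse set), but for general $l$ one must control cancellation in sums of $l$ unit-circle vectors with a short resultant, uniformly in $w$; the relevant estimates are precisely the arithmetic content attributed to \cite{BenatarBuckleyWigman}, so in the present paper this theorem is quoted rather than reproved, and the role of the proof sketch here is to indicate how such a quasi-correlation count feeds into the density-one conclusion. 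A secondary technical point is handling the coupling between the range of $\|w\|$ and $n$ (since the admissible radius $n^{1/2-\delta}$ grows with $n$); this is dealt with by the dyadic decomposition in step (iii), at the cost of an extra $\log x$ factor which is harmless against the power saving.
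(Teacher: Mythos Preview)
The paper does not prove this statement: Theorem~\ref{thm:quasi-corr small} is imported from the external reference~\cite{BenatarBuckleyWigman} (listed in the bibliography as ``in preparation'') and is invoked here only as a black box to justify the hypotheses used in Section~\ref{sec:RestrictedAverages}. There is therefore no proof in the present paper to compare your proposal against, a point you yourself correctly acknowledge. Your sketch --- a first-moment count of exceptional $n\le x$ via a union bound over short resultant vectors $w$, with the arithmetic power-saving in step~(ii) explicitly flagged as the substantive input owed to~\cite{BenatarBuckleyWigman} --- is a plausible shape for how such a density-one statement is established, but since the paper supplies no argument of its own there is nothing further to compare.
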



\subsection{A version of Theorem \ref{thm:VarMainGeneralized} with restricted averages}

We have the following analogue of Theorem \ref{thm:VarMainGeneralized}:

\begin{theorem}
	\label{thm:VarMainExplRestricted}
	
	\begin{enumerate}  Let $\delta>0$, and $0<\epsilon< \delta/5$.
		
		\item If $S_{2}'\subseteq S_{2}$ is a sequence satisfying the hypotheses $ \mathcal{D}(n,\epsilon/2),$ $ \Ac(n;2,\epsilon)$, and $\Ac(n;4,\epsilon)$ for all $n\in S_{2}'$, then in the setting of Theorem \ref{thm:VarMainGeneralized} part (1),
		\begin{equation*}
		\Vc_{x_{0},\rho}\left(X_{f_{n},r}\right)\sim\frac{16}{3\pi \cos^{2}\theta_{f_{n}}}r^{4}T^{-1}
		\end{equation*}
		uniformly for all $x_{0}\in\mathbb{T}^{2}$, $n^{-1/2+\delta}\le\rho\le 1$
		and $r_{0}< r< r_{1}$, and $f_{n}\in\Fc_{1}(n;T(n),\eta(n))$.
		
		\item Let $k\ge 3$ be an integer. If $S_{2}'\subseteq S_{2}$ is a sequence satisfying the length-$2k$ diagonal domination
		assumption and the hypotheses $ \mathcal{D}(n,\epsilon),$ $ \Ac(n;2,\epsilon)$, $ \Ac(n;4,\epsilon)$, and $\Ac(n;2k,\epsilon)$ for all $n\in S_{2}'$, then in the setting of Theorem \ref{thm:VarMainGeneralized} part (2),
		\begin{equation*}
		\mathbb{E}_{x_{0},\rho}\left[\hat{X}_{f_{n},r}^{k}\right] \to \E[Z^{k}]
		\end{equation*}
		uniformly for  $x_{0}\in\Tb^{2}$, $r_{0} < r <r_{1}$, $n^{-1/2+\delta} \le \rho \le 1$, and $f_{n}\in \Fc_{2}(n;T(n),\eta(n))$,
		where $Z\sim N(0,1)$ is the standard Gaussian variable.
		
	\end{enumerate}
	
\end{theorem}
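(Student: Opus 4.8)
The plan is to mimic the proofs of Theorem \ref{thm:VarMainGeneralized} parts (1) and (2), replacing the exact orthogonality relations $\int_{\Tb^2}e(\langle x,\lambda\rangle)\,dx=\mathbf{1}_{\lambda=0}$ by their approximate analogues on the small ball $B_{x_0}(\rho)$. Concretely, for $\mu\in\Z^2\setminus\{0\}$ one has
\[
\frac{1}{\vol(B_{x_0}(\rho))}\int_{B_{x_0}(\rho)}e(\langle x,\mu\rangle)\,dx = e(\langle x_0,\mu\rangle)\cdot g_2(\rho\|\mu\|),
\]
with $g_2$ as in \eqref{eq:g_d}, and by the decay $g_2(s)\ll s^{-3/2}$ from Lemma \ref{lem:H2Formulas}(3) this is $O((\rho\|\mu\|)^{-3/2})$. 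Thus, first I would expand $X_{f_n,r;x}$ using Lemma \ref{lem:InnerIntegral}, integrate the resulting double (or $2k$-fold) sum of exponentials against the normalised indicator of $B_{x_0}(\rho)$, and split the outcome into a \emph{main term} coming from those tuples whose frequency sum vanishes — which reproduces exactly the full-torus expressions \eqref{eq:VarFormula2d} and \eqref{eq:kthMoment} — plus an \emph{error term} coming from tuples $(\lambda^1,\dots,\lambda^l)$ with $0<\|\sum\lambda^i\|$; each such tuple contributes a factor $g_2(\rho\|\sum\lambda^i\|)$.

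The second step is to control these error terms using the separateness hypotheses. For the variance (part (1)), the relevant tuples have length $l=2$ and $l=4$ (after expanding the square of the centred mass), so under $\Ac(n;2,\epsilon)$ and $\Ac(n;4,\epsilon)$ any non-degenerate frequency sum satisfies $\|\sum\lambda^i\|> n^{1/2-\epsilon}$, whence $\rho\|\sum\lambda^i\| > n^{-1/2+\delta}\cdot n^{1/2-\epsilon} = n^{\delta-\epsilon}$, which tends to infinity since $\epsilon<\delta/5<\delta$. The $g_2$-factor is then $O(n^{-\frac32(\delta-\epsilon)})$, and summing trivially over the at most $N^2$ (resp.\ $N^4$) tuples and using $N=n^{o(1)}$, together with the coefficient bounds $[\underline v]_\infty\le T^{\eta(n)}$ for $f_n\in\Fc_2$ (or the $\widetilde V$-smallness for $\Fc_1$ via Lemma \ref{lem:BasicVarProp}), shows this error is $O(n^{-\eta'})$ for some $\eta'>0$, hence negligible against the main term of order $r^4T^{-1}$ (since $r\ge r_0 = n^{-1/2}T_0(n)$ and $T_0,T_1$ are at most $n^{o(1)}$). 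So the restricted variance has the same asymptotics $\frac{16}{3\pi\cos^2\theta_{f_n}}r^4T^{-1}$. For the $k$-th restricted moment (part (2)), the same expansion produces the main term $\E[\hat X_{f_n,r}^k]$ exactly as in \eqref{eq:kthMoment} — where I also need the restricted variance asymptotics from part (1) in the normalisation $\Vc_{x_0,\rho}^{-k/2}$ — plus error tuples of lengths $l\le 2k$, controlled identically by $\Ac(n;2k,\epsilon)$ (which, by the monotone containment $\Ac(n;2k,\epsilon)\Rightarrow\Ac(n;l,\epsilon)$ for even $l\le 2k$, and noting odd-length sums are automatically $\gg n^{1/2}$ with no loss, covers all needed lengths), yielding the Gaussian limit uniformly.

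The main obstacle I anticipate is \emph{uniformity in $x_0$ and $\rho$} across the whole range $n^{-1/2+\delta}\le\rho\le 1$, combined with bookkeeping of the numerous error tuples of intermediate "structure type" in the moment computation. For the $k$-th moment, the partition-into-equivalence-classes machinery of section \ref{sec:Proof_Main_thm_part2} must be rerun, but now a tuple can fail to be in $\Dc_n(2k)$ and still contribute, provided its frequency sum is small; one must verify that such near-diagonal-but-not-diagonal tuples are both \emph{rare} (this is exactly what $\Ac(n;2k,\epsilon)$ forbids for the full sum, but partial sums over sub-blocks $\Lambda_j$ require applying $\Ac(n;2l_j,\epsilon)$ to each block separately) and \emph{small} (the $g_2$-decay). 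The phases $e(\langle x_0,\sum\lambda^i\rangle)$ have modulus $1$ and so drop out of all absolute-value bounds, which is why uniformity in $x_0$ comes for free once the $g_2$-estimates are in place; the genuine care is in ensuring the error exponent $\frac32(\delta-\epsilon)$ beats the polynomial-in-$N$ losses uniformly, which is where the hypothesis $\epsilon<\delta/5$ is used with room to spare. I would also invoke a restricted analogue of Lemma \ref{lem:Lower_Bound_F2} (lower bound for $\Vc_{x_0,\rho}(X_{f_n,r})$), which follows from part (1) of this theorem exactly as in the unrestricted case, to license the division by $\Vc_{x_0,\rho}^{k/2}$ in the moment estimate.
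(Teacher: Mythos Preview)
Your approach is correct and essentially matches the paper's: Lemmas \ref{lem:ExpVarShrinking} and \ref{lem:KthMoment} carry out precisely the expansion you describe, splitting into the zero-frequency-sum main term (which reproduces the unrestricted expressions verbatim) and the nonzero-sum error controlled by $\Ac(n;l,\epsilon)$ via the $g_2$-decay. Two minor simplifications relative to the obstacles you anticipate: first, the structure-set machinery of section \ref{sec:Proof_Main_thm_part2} applies only to the zero-sum main term, which is \emph{literally} the same sum as in \eqref{eq:kthMoment}, so no sub-block quasi-correlation estimates $\Ac(n;2l_j,\epsilon)$ are required --- a single application of $\Ac(n;2k,\epsilon)$ to the full $2k$-sum disposes of the entire off-diagonal contribution at once; second, odd-length frequency sums never arise (expanding $(X_{f_n,r}-\pi r^2)^k$ via Lemma \ref{lem:InnerIntegral} always produces $2k$ frequencies), so your parenthetical about them is moot --- and incidentally false as stated, since three lattice points on $\sqrt{n}\,\mathcal{S}^1$ near the vertices of an equilateral triangle can sum to something much smaller than $n^{1/2}$. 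The paper handles the restricted expectation via Granville--Wigman's equidistribution theorem rather than your direct $\Ac(n;2,\epsilon)$ expansion, but your route works equally well and in fact gives a sharper error exponent.
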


Theorem \ref{thm:VarMainExplRestricted} follows along similar lines as the proof of Theorem \ref{thm:VarMainGeneralized}, where we use the expressions for the restricted moments below (cf. equation \eqref{eq:ExpectationEquality}, Lemma \ref{lem:VarExpd2} and equation \eqref{eq:kthMoment}). We remark that Theorem \ref{thm:UpperBound2d} can also be extended to $ \Vc_{x_{0},\rho}(X_{f_{n},r}) $, however the lower bound will only hold for a generic $ n\in S_2 $.

\begin{lemma}[Expectation and variance, $d=2$, $x$ drawn in shrinking discs]
\label{lem:ExpVarShrinking}

For $d=2$ let $0<\delta<1/2$, $0<\epsilon<\delta/5$, and $S_{2}'\subseteq S_{2}$.

\begin{enumerate}

\item

If  $n\in S_{2}'$ satisfy the hypothesis $\Ac(n;2,\epsilon) $, then
	\[
	\mathbb{E}_{x_{0},\rho}\left[X_{f_{n},r}\right]=\pi r^{2}+O\left(r^{2}n^{-\frac{3}{5}\delta+3\epsilon}\right)
	\]
	uniformly for $x_{0}\in\mathbb{T}^{2},$ $n^{-1/2+\delta}\le\rho\le1$
	and $r>0$.

\item
\label{lem:VarFormulaShrinking}

If  $n\in S_{2}'$ satisfy the hypotheses $ \Ac(n;2,\epsilon) $ and $\Ac(n;4,\epsilon)$, then
	\begin{align*}
	\text{\ensuremath{\mathcal{V}}}_{x_{0},\rho}\left(X_{f_{n},r}\right) & =8\pi^{2}r^{4}\sum_{\begin{subarray}{c}
		\lambda,\lambda'\in\mathcal{E}_{n}\\
		\lambda\ne\lambda'
		\end{subarray}}\left|c_{\lambda}\right|^{2}\left|c_{\lambda'}\right|^{2}h_{2}\left(r\left\Vert \lambda-\lambda'\right\Vert \right) +O\left(r^{4} n^{-\frac{3}{5}\delta+4\epsilon}\right)
	\end{align*}
uniformly for $x_{0}\in\mathbb{T}^{2},$ $n^{-1/2+\delta}\le\rho\le1$ and $r>0$.

\end{enumerate}

\end{lemma}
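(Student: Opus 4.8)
The plan is to rerun the computations behind \eqref{eq:ExpectationEquality} and Lemma~\ref{lem:VarExpd2}(1), with the exact orthogonality relation $\int_{\Tb^{2}}e(\langle x,\nu\rangle)\,dx=\mathbf{1}_{\{\nu=0\}}$ on the full torus replaced by the Fourier coefficients of the normalised indicator $\psi_{x_{0},\rho}:=\vol(B_{x_{0}}(\rho))^{-1}\mathbf{1}_{B_{x_{0}}(\rho)}$ of the averaging disc. By Lemma~\ref{lem:InnerIntegral} (for $d=2$, where $g_{2}(x)=J_{1}(2\pi x)/(2\pi x)$ and, crucially, $h_{2}=g_{2}^{2}$),
\[
X_{f_{n},r;x}-\pi r^{2}=2\pi r^{2}\sum_{\substack{\lambda,\lambda'\in\Ec_{n}\\ \lambda\ne\lambda'}}c_{\lambda}\overline{c_{\lambda'}}\,e(\langle x,\lambda-\lambda'\rangle)\,g_{2}(r\|\lambda-\lambda'\|).
\]
When $\rho\le 1/2$ the disc lifts isometrically to a Euclidean disc and a direct computation gives $\E_{x_{0},\rho}[e(\langle x,\mu\rangle)]=2\,g_{2}(\rho\|\mu\|)\,e(\langle x_{0},\mu\rangle)$ for $\mu\ne0$ (and $=1$ for $\mu=0$); when $1/2<\rho\le1$ either $B_{x_{0}}(\rho)=\Tb^{2}$ (once $\rho\ge\diam\Tb^{2}$) or $B_{x_{0}}(\rho)$ is the complement of a small region bounded by finitely many circular arcs, and in all cases $|\E_{x_{0},\rho}[e(\langle x,\mu\rangle)]|\ll(\rho\|\mu\|)^{-1}$ for $\mu\ne0$, uniformly in $x_{0}$ and $\rho\in(0,1]$. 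The only analytic input is Lemma~\ref{lem:H2Formulas}: $g_{2}$ is bounded and $g_{2}(s)\ll s^{-3/2}$.

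\emph{Part (1).} Averaging the identity above over $x\in B_{x_{0}}(\rho)$ gives $\E_{x_{0},\rho}[X_{f_{n},r}]=\pi r^{2}+E_{1}$ with
\[
E_{1}=4\pi r^{2}\sum_{\substack{\lambda,\lambda'\in\Ec_{n}\\ \lambda\ne\lambda'}}c_{\lambda}\overline{c_{\lambda'}}\,g_{2}(r\|\lambda-\lambda'\|)\,g_{2}(\rho\|\lambda-\lambda'\|)\,e(\langle x_{0},\lambda-\lambda'\rangle).
\]
For $\lambda\ne\lambda'$ the vector $\lambda-\lambda'=\lambda+(-\lambda')$ is a non-zero sum of two points of $\Ec_{n}$, so $\Ac(n;2,\epsilon)$, i.e.\ $\Cc_{n}(2;n^{1/2-\epsilon})=\varnothing$, forces $\|\lambda-\lambda'\|>n^{1/2-\epsilon}$; since $\rho\ge n^{-1/2+\delta}$ this yields $\rho\|\lambda-\lambda'\|\ge n^{\delta-\epsilon}$ and hence $|g_{2}(\rho\|\lambda-\lambda'\|)|\ll n^{-(\delta-\epsilon)}$. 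Bounding $|g_{2}(r\|\cdot\|)|\ll1$, using $\sum_{\lambda}|c_{\lambda}|\le\sqrt{N}$ (Cauchy--Schwarz and \eqref{eq:BasicNormalization}) and $N=n^{o(1)}$, we get $|E_{1}|\ll r^{2}Nn^{-(\delta-\epsilon)}\ll r^{2}n^{2\epsilon-\delta}\ll r^{2}n^{-\frac{3}{5}\delta+3\epsilon}$ uniformly in $x_{0},\rho$ and $r$, which is the first assertion.

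\emph{Part (2).} Since $\E_{x_{0},\rho}[X_{f_{n},r}-\pi r^{2}]=E_{1}$, one has $\Vc_{x_{0},\rho}(X_{f_{n},r})=\E_{x_{0},\rho}[(X_{f_{n},r}-\pi r^{2})^{2}]-E_{1}^{2}$, and $E_{1}^{2}\ll r^{4}n^{4\epsilon-2\delta}$ already lies inside the target error. Squaring the expansion of $X_{f_{n},r;x}-\pi r^{2}$ and averaging over $B_{x_{0}}(\rho)$ gives a sum over quadruples $(\lambda,\lambda',\mu,\mu')$ with $\lambda\ne\lambda'$, $\mu\ne\mu'$, which we split according to whether $\nu:=\lambda-\lambda'+\mu-\mu'$ vanishes. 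The $\nu=0$ terms carry coefficient exactly $1$, so their contribution is \emph{verbatim} the full-torus quantity $\E_{\Tb^{2}}[(X_{f_{n},r}-\pi r^{2})^{2}]=\Vc_{\Tb^{2}}(X_{f_{n},r})$, which by \eqref{eq:Zygmund 4-corr} (Zygmund's identity $\Sc_{n}(4)=\Dc_{n}(4)$) equals the right-hand side of \eqref{eq:VarFormula2d}; this part needs no new work. For the $\nu\ne0$ terms, the quadruple $(\lambda,-\lambda',\mu,-\mu')\in\Ec_{n}^{4}$ has non-zero sum $\nu$, so $\Ac(n;4,\epsilon)$, i.e.\ $\Cc_{n}(4;n^{1/2-\epsilon})=\varnothing$, forces $\|\nu\|>n^{1/2-\epsilon}$, whence $|\E_{x_{0},\rho}[e(\langle x,\nu\rangle)]|\ll(\rho\|\nu\|)^{-1}\ll n^{-(\delta-\epsilon)}$. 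Dropping the constraint $\nu\ne0$ (which only enlarges the sum), bounding each $g_{2}(r\|\cdot\|)$ by $O(1)$ and using $(\sum_{\lambda}|c_{\lambda}|)^{4}\le N^{2}=n^{o(1)}$, the $\nu\ne0$ contribution is $\ll r^{4}N^{2}n^{-(\delta-\epsilon)}\ll r^{4}n^{3\epsilon-\delta}\ll r^{4}n^{-\frac{3}{5}\delta+4\epsilon}$. Combining, $\Vc_{x_{0},\rho}(X_{f_{n},r})$ equals the right-hand side of \eqref{eq:VarFormula2d} up to $O(r^{4}n^{-\frac{3}{5}\delta+4\epsilon})$ uniformly in $x_{0},\rho$ and $r$.

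The crux is the uniformity over $x_{0}$: the separateness hypotheses $\Ac(n;2,\epsilon)$ and $\Ac(n;4,\epsilon)$ are precisely what upgrades ``the sum of two (resp.\ four) lattice points of $\Ec_{n}$ is unexpectedly small'' to ``it vanishes'', which lets the $\rho\gg n^{-1/2}$-scale decay of the disc's Fourier coefficients annihilate every off-diagonal term at once; thereafter the statement reduces to the already-established full-torus formulas. The remaining ingredients --- the Fourier analysis of the geodesic disc $B_{x_{0}}(\rho)$ (in particular the harmless regime where $\rho$ is comparable to $\diam\Tb^{2}$) and the bookkeeping of which quadruples satisfy $\nu=0$ --- are routine, and the deliberately generous exponent $-\tfrac{3}{5}\delta$ (with the $3\epsilon$, $4\epsilon$ room) comfortably absorbs the crude bounds $\sum|c_{\lambda}|\le\sqrt{N}$ and $N=n^{o(1)}$.
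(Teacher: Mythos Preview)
Your proof is correct, and for Part~(2) it is essentially the paper's own argument: expand $(X_{f_n,r}-\pi r^2)^2$ via Lemma~\ref{lem:InnerIntegral}, observe that the $\nu=0$ block reproduces the full-torus formula \eqref{eq:VarFormula2d} exactly (Zygmund), and kill the $\nu\ne0$ block with $\Ac(n;4,\epsilon)$ and the decay of the disc's Fourier coefficient. Your extra care with the regime $\rho\in(1/2,1]$, where the geodesic disc is no longer a Euclidean disc and one must replace the exact $g_2$ formula by a uniform $O((\rho\|\mu\|)^{-1})$ bound, is a point the paper leaves implicit.

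For Part~(1), however, your route is genuinely different from the paper's. The paper does \emph{not} expand in Fourier and invoke $\Ac(n;2,\epsilon)$ directly; instead it quotes Granville--Wigman's theorem \eqref{eq:Granville_Wigman_theorem} (which already encodes the separateness hypothesis) and splits into two cases: if $r>n^{-1/2+2\delta/5}$ one applies \eqref{eq:Granville_Wigman_theorem} to the inner integral directly, while if $r\le n^{-1/2+2\delta/5}$ one swaps the order of integration, sandwiches $\E_{x_0,\rho}[X_{f_n,r}]$ between $(r^2/\rho^2)\int_{B_{x_0}(\rho\pm r)}f_n^2$, and applies \eqref{eq:Granville_Wigman_theorem} at radius $\rho\pm r$; the threshold is chosen so that both cases give the exponent $-\tfrac{3}{5}\delta$. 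Your direct Fourier argument is more self-contained (no appeal to the external Granville--Wigman result), needs no case split, and in fact produces the sharper exponent $-\delta+O(\epsilon)$ before you voluntarily weaken it to match the statement; the ``generous'' $-\tfrac{3}{5}\delta$ you noted is precisely an artefact of the paper's two-regime split, not of any intrinsic obstruction. The paper's approach, on the other hand, has the virtue of reusing an already-packaged equidistribution statement rather than reopening the Fourier computation.
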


\begin{lemma}[Higher moments, $d=2$, $x$ drawn in shrinking discs]
	\label{lem:KthMoment}For $d=2$ let $ k\ge3 $, $0<\delta<1/2$,
	$0<\epsilon< \delta/5$, and $ S_2' \subseteq S_2 $ satisfying $ \Ac(2;n,\epsilon), $ $ \Ac(4;n,\epsilon), $ and $\Ac(n;2k,\epsilon) $ for every $ n\in S_2' $ . We have
	\begin{align*}
	\E_{x_{0},\rho}[\hat{X}_{f_{n},r}^{k}] & =(2\pi)^k r^{2k}\Vc_{x_{0},\rho}\left(X_{f_{n},r}\right)^{-k/2}\sum_{\begin{subarray}{c}
		\forall1\le i\le k,\,\lambda_{i}\ne\lambda_{i}'\in\mathcal{E}_{n}\\
		\sum_{i=1}^{k}\left(\lambda_{i}-\lambda_{i}'\right)=0
		\end{subarray}}\prod_{j=1}^{k}c_{\lambda_{j}}\overline{c_{\lambda_{j}'}}g_{2}\left(r\left\Vert \lambda_{j}-\lambda_{j}'\right\Vert \right)\\
	& +O\left(\Vc_{x_{0},\rho}\left(X_{f_{n},r}\right)^{-k/2}  r^{2k} n^{-\frac{3}{5}\delta+4\epsilon}\right)\nonumber
	\end{align*}
	uniformly for $x_{0}\in\mathbb{T}^{2},$ $n^{-1/2+\delta}\le\rho\le1$
	and $r>0$.
\end{lemma}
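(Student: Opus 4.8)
The plan is to follow the derivation behind the unrestricted expansion \eqref{eq:kthMoment}, the one new feature being that averaging over the small geodesic ball $B_{x_{0}}(\rho)$ replaces the exact orthogonality of exponentials on $\Tb^{2}$ by an approximate one, so that the off-diagonal frequencies no longer vanish and must be controlled through the quasi-correlation hypotheses.

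First I would put $Y_{x}:=X_{f_{n},r;x}-\pi r^{2}$, which by Lemma \ref{lem:InnerIntegral} (with $d=2$) equals
\[
Y_{x}=2\pi r^{2}\sum_{\lambda\ne\lambda'\in\Ec_{n}}c_{\lambda}\overline{c_{\lambda'}}\,e(\langle x,\lambda-\lambda'\rangle)\,g_{2}(r\|\lambda-\lambda'\|),
\]
and record the trivial pointwise bound $|Y_{x}|\le\pi r^{2}N$, coming from $|g_{2}|\le\tfrac12$ and $(\sum_{\lambda}|c_{\lambda}|)^{2}\le N$. By Lemma \ref{lem:ExpVarShrinking}(1), $\mu:=\E_{x_{0},\rho}[X_{f_{n},r}]-\pi r^{2}=O(r^{2}n^{-\frac{3}{5}\delta+3\epsilon})$ uniformly in the stated range, so $X_{f_{n},r;x}-\E_{x_{0},\rho}[X_{f_{n},r}]=Y_{x}-\mu$. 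Expanding $(Y_{x}-\mu)^{k}$ by the binomial theorem and averaging over $x\in B_{x_{0}}(\rho)$, each term carrying at least one factor of $\mu$ is, using $\E_{x_{0},\rho}[|Y_{x}|^{k-j}]\le(\pi r^{2}N)^{k-j}$ and $N=n^{o(1)}$, of size $O(r^{2k}n^{-\frac{3}{5}\delta+4\epsilon})$ (one power of $\epsilon$ being spent to absorb $N^{k}$ into $n^{\epsilon}$). It thus remains to analyse $\E_{x_{0},\rho}[Y_{x}^{k}]$.

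Expanding $Y_{x}^{k}$ into its $2k$-fold sum over $(\lambda_{1},\lambda_{1}',\dots,\lambda_{k},\lambda_{k}')$ with $\lambda_{j}\ne\lambda_{j}'$ and averaging term by term produces the factor
\[
I(\xi):=\frac{1}{\vol(B_{x_{0}}(\rho))}\int_{B_{x_{0}}(\rho)}e(\langle x,\xi\rangle)\,dx,\qquad \xi:=\sum_{j=1}^{k}(\lambda_{j}-\lambda_{j}')\in\Z^{2}.
\]
The tuples with $\xi=0$ contribute exactly $(2\pi)^{k}r^{2k}$ times the sum displayed in the lemma, since $I(0)=1$. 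For $\xi\ne0$, when $B_{x_{0}}(\rho)$ is an embedded Euclidean disc one has $I(\xi)=2\,e(\langle x_{0},\xi\rangle)\,g_{2}(\rho\|\xi\|)$, hence $|I(\xi)|\ll(\rho\|\xi\|)^{-3/2}$ by Lemma \ref{lem:H2Formulas}(3); the same bound should hold, uniformly in $x_{0}$ and $\rho$, in the remaining range where the ball wraps around $\Tb^{2}$, from the Fourier decay of the periodised indicator of a ball. Since $(\lambda_{1},-\lambda_{1}',\dots,\lambda_{k},-\lambda_{k}')\in\Ec_{n}^{2k}$ has sum $\xi$, the hypothesis $\Ac(n;2k,\epsilon)$ forces $\|\xi\|>n^{1/2-\epsilon}$ whenever $\xi\ne0$, so $\rho\|\xi\|>n^{\delta-\epsilon}$ and $|I(\xi)|\ll n^{-\frac{3}{2}(\delta-\epsilon)}$. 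Combining this with the crude bound $\sum\prod_{j}|c_{\lambda_{j}}||c_{\lambda_{j}'}||g_{2}(r\|\lambda_{j}-\lambda_{j}'\|)|\ll N^{k}=n^{o(1)}$ bounds the entire $\xi\ne0$ contribution to $\E_{x_{0},\rho}[Y_{x}^{k}]$ by $O(r^{2k}n^{-\frac{3}{2}\delta+2\epsilon})$, which is subsumed in the claimed error $O(r^{2k}n^{-\frac{3}{5}\delta+4\epsilon})$; multiplying through by $\Vc_{x_{0},\rho}(X_{f_{n},r})^{-k/2}$ then yields the lemma. The hypotheses $\Ac(n;2,\epsilon)$ and $\Ac(n;4,\epsilon)$ enter only via the invocation of Lemma \ref{lem:ExpVarShrinking}.

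I expect the main obstacle to be obtaining the decay $|I(\xi)|\ll(\rho\|\xi\|)^{-3/2}$ \emph{uniformly} in $\rho\in[n^{-1/2+\delta},1]$ and $x_{0}\in\Tb^{2}$: for $\rho$ not small, $B_{x_{0}}(\rho)$ is no longer a Euclidean disc, and one still needs the $\|\xi\|^{-3/2}$-type Fourier decay of its indicator with constants independent of $\rho$ and $x_{0}$ (equivalently, control of the Fourier transform of the periodised indicator of a ball near the radii at which the ball's boundary degenerates). Everything else — the binomial bookkeeping and the absorption of the $N=n^{o(1)}$ factors into $n^{\epsilon}$ — is routine, and the quasi-correlation input is deployed exactly as in the proof of Theorem \ref{thm:VarMainGeneralized}, part 2.
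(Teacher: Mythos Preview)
Your proposal is correct and follows essentially the same route as the paper: expand $Y_{x}^{k}$ via Lemma~\ref{lem:InnerIntegral}, separate the $\xi=0$ contribution (main term) from $\xi\ne0$ (error), use $\Ac(n;2k,\epsilon)$ to force $\rho\|\xi\|>n^{\delta-\epsilon}$ and the Bessel decay $g_{2}(s)\ll s^{-3/2}$ to bound $I(\xi)$, and finally shift the centring from $\pi r^{2}$ to $\E_{x_{0},\rho}[X_{f_{n},r}]$ using Lemma~\ref{lem:ExpVarShrinking}(1) together with the crude bound $|Y_{x}|\ll r^{2}N$. The paper carries out the recentring step in a single line citing \eqref{eq:inner_int_estimate} and Lemma~\ref{lem:ExpVarShrinking}(1), whereas you spell out the binomial expansion explicitly; these are the same argument.

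On your concern about the uniformity of the bound $|I(\xi)|\ll(\rho\|\xi\|)^{-3/2}$ when $\rho$ is of order one: the paper does not address this either and simply writes $\tfrac{1}{\pi\rho^{2}}\int_{B_{x_{0}}(\rho)}e(\langle x,\xi\rangle)\,dx$ as $2e(\langle x_{0},\xi\rangle)g_{2}(\rho\|\xi\|)$, implicitly treating $B_{x_{0}}(\rho)$ as an embedded Euclidean disc. So this is not a gap in your argument relative to the paper's; if anything, you have identified a point that the paper glosses over. (For $\rho\ge\sqrt{2}/2$ the ball is all of $\Tb^{2}$ and one is back to exact orthogonality; the intermediate range $1/2\le\rho<\sqrt{2}/2$ can be handled by the Poisson-summed Fourier transform of the disc indicator, or simply absorbed since the interesting regime is $\rho$ small.)
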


\subsection{Proofs of Lemma \ref{lem:ExpVarShrinking} and Lemma \ref{lem:KthMoment}}

\begin{proof}[Proof of Lemma \ref{lem:ExpVarShrinking}, $1$st part]
	We have
	\begin{equation}
	\label{eq:exp_basic_formula}
	\E_{x_{0},\rho}\left[X_{f_{n},r}\right]=\frac{1}{\pi\rho^{2}}\int_{B_{x_{0}}\left(\rho\right)}\int_{B_{x}\left(r\right)}f_{n}\left(y\right)^{2}\,\text{d}y\,\text{d}x.
	\end{equation}
	
	Granville-Wigman's
	\cite[Theorem 1.2]{GranvilleWigman} asserts that for $ \epsilon_1 > \epsilon_2 > 0$, $ 0 < \epsilon_3 < \epsilon_1 - \epsilon_2 $ and  $ n\in S_2 $ satisfying  $\Ac(n;2,\epsilon_2) $, we have
	\begin{equation}
	\label{eq:Granville_Wigman_theorem}
	\int_{B_{x}\left(r\right)}f_{n}\left(y\right)^{2}\,\text{d}y = \pi r^2 \left(1+ O\left(n^{-3\epsilon_3 /2}\right)\right)
	\end{equation}
	uniformly in $ x\in \mathbb{T}^2 $ and $ r>n^{-1/2 + \epsilon_1} $. If $r>n^{-1/2+\frac{2}{5}\delta}$, then by substituting \eqref{eq:Granville_Wigman_theorem} with  $ \epsilon_1 = \frac{2}{5}\delta $, $ \epsilon_2 = \epsilon $ and $ \epsilon_3 = \frac{2}{5}\delta - 2\epsilon $ into \eqref{eq:exp_basic_formula}, we have \[\E_{x_{0},\rho}\left[X_{f_{n},r}\right]=\pi r^{2}\left(1+O\left(n^{-\frac{3}{2}\left( \frac{2}{5}\delta - 2\epsilon\right)}\right)\right)\]
	for every $\rho$.
	
	Otherwise, note that
	\[
	\E_{x_{0},\rho}\left[X_{f_{n},r}\right]=\frac{1}{\pi\rho^{2}}\int_{B_{x_{0}}\left(\rho+r\right)}f_{n}\left(y\right)^{2}\int_{B_{x_{0}}\left(\rho\right)\cap B_{y}\left(r\right)}\,\text{d}x\,\text{d}y,
	\]
	so
	\begin{equation}
	\label{eq:Exp_upper_lower_bnds}
	\frac{r^{2}}{\rho^{2}}\int_{B_{x_{0}}\left(\rho-r\right)}f_{n}\left(y\right)^{2}\,\text{d}y\le\E_{x_{0},\rho}\left[X_{f_{n},r}\right]\le\frac{r^{2}}{\rho^{2}}\int_{B_{x_{0}}\left(\rho+r\right)}f_{n}\left(y\right)^{2}\,\text{d}y.
	\end{equation}
	Since $r/\rho \le n^{-\frac{3}{5}\delta}$, we can use \eqref{eq:Granville_Wigman_theorem} with  $ \epsilon_1 = \delta $, $ \epsilon_2 = \epsilon $ and $ \epsilon_3 = \delta - 2\epsilon $
	to deduce that
	\begin{equation}
	\label{eq:Exp_Inner_integral}
	\int_{B_{x_{0}}\left(\rho\pm r\right)}f_{n}\left(y\right)^{2}\,\text{d}y=\pi\rho^{2}\left(1+O\left(n^{-\frac{3}{5}\delta}\right)\right),
	\end{equation} and the statement of the first part of Lemma
	\ref{lem:ExpVarShrinking} follows upon substituting \eqref{eq:Exp_Inner_integral} into \eqref{eq:Exp_upper_lower_bnds}.
\end{proof}
\begin{proof}[Proof of Lemma \ref{lem:ExpVarShrinking}, $2$nd part]
	We have
	\begin{equation*}
	\Vc_{x_{0},\rho}(X_{f_{n},r}) = \frac{1}{\pi\rho^{2}}\int_{B_{x_{0}}\left(\rho\right)}\left(\int_{B_{x}\left(r\right)}f_{n}\left(y\right)^{2}\,\text{d}y-	\E_{x_{0},\rho}\left[X_{f_{n},r}\right]\right)^{2}\,\text{d}x.
	\end{equation*}
	
	By (\ref{eq:IntegrandVar}),
	\begin{align*}
	& \frac{1}{\pi\rho^{2}}\int_{B_{x_{0}}\left(\rho\right)}\left(\int_{B_{x}\left(r\right)}f_{n}\left(y\right)^{2}\,\text{d}y-\pi r^{2}\right)^{2}\,\text{d}x\\
	& =4\pi^{2}r^{4}\sum_{\begin{subarray}{c}
		\lambda,\lambda',\lambda'',\lambda'''\in\mathcal{E}_{n}\\
		\lambda\ne\lambda'\\
		\lambda''\ne\lambda'''
		\end{subarray}}c_{\lambda}\overline{c_{\lambda'}}c_{\lambda''}\overline{c_{\lambda'''}}g_{2}\left(r\left\Vert \lambda-\lambda'\right\Vert \right)g_{2}\left(r\left\Vert \lambda''-\lambda'''\right\Vert \right)\\
	& \times\frac{1}{\pi\rho^{2}}\int_{B_{x_{0}}\left(\rho\right)}e\left(\left\langle x,\lambda-\lambda'+\lambda''-\lambda'''\right\rangle \right)\,\text{d}x\\
	& =8\pi^{2}r^{4}\sum_{\begin{subarray}{c}
		\lambda,\lambda'\in\mathcal{E}_{n}\\
		\lambda\ne\lambda'
		\end{subarray}}\left|c_{\lambda}\right|^{2}\left|c_{\lambda'}\right|^{2}g_{2}\left(r\left\Vert \lambda-\lambda'\right\Vert \right)^{2}\\
	& +8\pi^{2}r^{4}\sum_{\begin{subarray}{c}
		\lambda,\lambda',\lambda'',\lambda'''\in\mathcal{E}_{n}\\
		\lambda\ne\lambda'\\
		\lambda''\ne\lambda'''\\
		\lambda-\lambda'+\lambda''-\lambda'''\ne0
		\end{subarray}}c_{\lambda}\overline{c_{\lambda'}}c_{\lambda''}\overline{c_{\lambda'''}}g_{2}\left(r\left\Vert \lambda-\lambda'\right\Vert \right)g_{2}\left(r\left\Vert \lambda''-\lambda'''\right\Vert \right)\\
	& \times e\left(\left\langle x_{0},\lambda-\lambda'+\lambda''-\lambda'''\right\rangle \right)g_{2}\left(\rho\left\Vert \lambda-\lambda'+\lambda''-\lambda'''\right\Vert \right).
	\end{align*}
	By the hypothesis $\Ac(n;4,\epsilon)$ and Lemma \ref{lem:H2Formulas}, we have
	\begin{align*}
	& \sum_{\begin{subarray}{c}
		\lambda,\lambda',\lambda'',\lambda'''\in\mathcal{E}_{n}\\
		\lambda\ne\lambda'\\
		\lambda''\ne\lambda'''\\
		\lambda-\lambda'+\lambda''-\lambda'''\ne0
		\end{subarray}}c_{\lambda}\overline{c_{\lambda'}}c_{\lambda''}\overline{c_{\lambda'''}}g_{2}\left(r\left\Vert \lambda-\lambda'\right\Vert \right)g_{2}\left(r\left\Vert \lambda''-\lambda'''\right\Vert \right)\\
	& \times e\left(\left\langle x_{0},\lambda-\lambda'+\lambda''-\lambda'''\right\rangle \right)g_{2}\left(\rho\left\Vert \lambda-\lambda'+\lambda''-\lambda'''\right\Vert \right)\\
	& \ll\left(\sum_{\lambda\in\mathcal{E}_{n}}\left|c_{\lambda}\right|\right)^{4}\frac{1}{\left(n^{\delta-\epsilon}\right)^{3/2}}\ll N^{2}n^{-\frac{3}{2}(\delta-\epsilon)} \ll n^{-\frac{3}{2}\delta + 2\epsilon}.
	\end{align*}
	
	Next, note that
	\begin{align}
	\label{eq:inner_int_estimate}
	\int_{B_{x}\left(r\right)}f_{n}\left(y\right)^{2}\,\text{d}y-\pi r^{2} &=  2\pi r^{2}\sum_{\lambda\ne\lambda'\in\mathcal{E}_{n}}
	c_{\lambda}\overline{c_{\lambda'}}g_{2}\left(r\left\Vert \lambda-\lambda'\right\Vert \right)   \ll r^2 \left(\sum_{\lambda\in\mathcal{E}_{n}}\left|c_{\lambda}\right|\right)^{2} \ll N r^2.
	\end{align}	
	By \eqref{eq:inner_int_estimate} and the first part of Lemma \ref{lem:ExpVarShrinking},
	
	\begin{equation*}
	\Vc_{x_{0},\rho}(X_{f_{n},r}) = \frac{1}{\pi\rho^{2}}\int_{B_{x_{0}}\left(\rho\right)}\left(\int_{B_{x}\left(r\right)}f_{n}\left(y\right)^{2}\,\text{d}y-\pi r^{2}\right)^{2}\,\text{d}x + O\left(r^4 n^{-\frac{3}{5}\delta+4\epsilon}\right)
	\end{equation*}	
	and the statement of Lemma \ref{lem:VarExpd2} follows.
\end{proof}

\begin{proof}[Proof of Lemma \ref{lem:KthMoment}]
	We have
	\begin{equation*}
	\E_{x_{0},\rho}[\hat{X}_{f_{n},r}^{k}]  = \Vc_{x_{0},\rho}\left(X_{f_{n},r}\right)^{-k/2} \cdot \frac{1}{\pi\rho^{2}}\int_{B_{x_{0}}\left(\rho\right)}\left(\int_{B_{x}\left(r\right)}f_{n}\left(y\right)^{2}\,\text{d}y-	\E_{x_{0},\rho}\left[X_{f_{n},r}\right]\right)^{k}\,\text{d}x.
	\end{equation*}
	By (\ref{eq:IntegrandVar}), we have
	\begin{align*}
	\frac{1}{\pi\rho^{2}}&\int_{B_{x_{0}}\left(\rho\right)} \left(\int_{B_{x}\left(r\right)}f_{n}\left(y\right)^{2}\,\text{d}y-\pi r^{2}\right)^{k}\,\text{d}x = \left(2\pi\right)^{k}r^{2k} \sum_{\begin{subarray}{c}
		\forall1\le i\le k,\,\lambda_{i}\ne\lambda_{i}'\in\mathcal{E}_{n}\\
		\sum_{i=1}^{k}\left(\lambda_{i}-\lambda_{i}'\right)=0
		\end{subarray}}\prod_{j=1}^{k}c_{\lambda_{j}}\overline{c_{\lambda_{j}'}}g_{2}\left(r\left\Vert \lambda_{j}-\lambda_{j}'\right\Vert \right)\\
	& +\left(2\pi\right)^{k}r^{2k}\sum_{\begin{subarray}{c}
		\forall1\le i\le k,\,\lambda_{i}\ne\lambda_{i}'\in\mathcal{E}_{n}\\
		\sum_{i=1}^{k}\left(\lambda_{i}-\lambda_{i}'\right)\ne0
		\end{subarray}}\prod_{j=1}^{k}c_{\lambda_{j}}\overline{c_{\lambda_{j}'}}g_{2}\left(r\left\Vert \lambda_{j}-\lambda_{j}'\right\Vert \right)\\
	& \times2e\left(\left\langle x_{0},\sum_{j=1}^{k}\left(\lambda_{j}-\lambda_{j}'\right)\right\rangle \right)g_{2}\left(\rho\left\Vert \sum_{j=1}^{k}\left(\lambda_{j}-\lambda_{j}'\right)\right\Vert \right).
	\end{align*}
	By the hypothesis $\Ac(n;2k,\epsilon)$,
	\begin{align*}
	\sum_{\begin{subarray}{c}
		\forall1\le i\le k,\,\lambda_{i}\ne\lambda_{i}'\in\mathcal{E}_{n}\\
		\sum_{i=1}^{k}\left(\lambda_{i}-\lambda_{i}'\right)\neq0
		\end{subarray}}& \prod_{j=1}^{k}c_{\lambda_{j}}\overline{c_{\lambda_{j}'}}g_{2}\left(r\left\Vert \lambda_{j}-\lambda_{j}'\right\Vert \right) e\left(\left\langle x_{0},\sum_{j=1}^{k}\left(\lambda_{j}-\lambda_{j}'\right)\right\rangle \right)g_{2}\left(\rho\left\Vert \sum_{j=1}^{k}\left(\lambda_{j}-\lambda_{j}'\right)\right\Vert \right)\\
	& \ll\left(\sum_{\lambda\in\mathcal{E}_{n}}\left|c_{\lambda}\right|\right)^{2k}\frac{1}{\left(n^{\delta - \epsilon}\right)^{3/2}}\ll N^k n^{-\frac{3}{2}(\delta-\epsilon)} \ll n^{-\frac{3}{2}+2\epsilon}.
	\end{align*}
	By \eqref{eq:inner_int_estimate} and the first part of Lemma \ref{lem:ExpVarShrinking},
	\begin{align*}
	\mathbb{E}_{x_{0},\rho}[\hat{X}_{f_{n},r}^{k}] & = \Vc_{x_{0},\rho}\left(X_{f_{n},r}\right)^{-k/2} \cdot \frac{1}{\pi\rho^{2}}\int_{B_{x_{0}}\left(\rho\right)}\left(\int_{B_{x}\left(r\right)}f_{n}\left(y\right)^{2}\,\text{d}y-\pi r^2 \right)^{k}\,\text{d}x \\ &+  O\left(\Vc_{x_{0},\rho}\left(X_{f_{n},r}\right)^{-k/2} r^{2k}  n^{-\frac{3}{5}\delta+4\epsilon}\right),
	\end{align*}
	and the statement of Lemma \ref{lem:KthMoment} follows.
\end{proof}

\section{\label{sec:AuxLemmasProof}Proofs of auxiliary lemmas}

In this section we provide the proofs for lemmas \ref{lem:BasicVarProp}, \ref{lem:InnerIntegral}, and \ref{lem:CosToDist}:
\begin{proof}[Proof of Lemma \ref{lem:BasicVarProp}]
	~
	\begin{enumerate}
		\item The upper bound is straightforward, and the lower bound follows from (\ref{eq:BasicNormalization})
		by invoking the Cauchy-Schwarz inequality on \eqref{eq:BasicNormalization}.
		\item By partial summation, for every $\lambda_{0}\in\mathcal{E}_{n}$ we
		have
		\[
		1=\sum_{\lambda\in\mathcal{E}_{n}}\left|c_{\lambda}\right|^{2}=N\left|c_{\lambda_{0}}\right|^{2}+E,
		\]
		where $\left|E\right|\le V\left(\underline{v} \right)$.
		Since $\lambda_{0}$ is arbitrary, we deduce that
		\[
		[\underline{v}]_{\infty} \le1+V\left(\underline{v} \right).
		\]
		\item Follows directly from parts $1$ and $2$ of this lemma.
	\end{enumerate}
\end{proof}

\begin{proof}[Proof of Lemma \ref{lem:InnerIntegral}]
	We have
	\begin{align}
		\label{eq:inner_int_expansion}
		\int_{B_{x}\left(r\right)}f_{n}\left(y\right)^{2}\,\text{d}y&=\int_{B_{x}\left(r\right)}\sum_{\lambda,\lambda'\in\mathcal{E}_{n}}c_{\lambda}\overline{c_{\lambda'}}e\left(\left\langle y,\lambda-\lambda'\right\rangle \right)\,\text{d}y\\
		& =\frac{\pi^{d/2}}{\Gamma\left(d/2+1\right)}r^{d}+\sum_{\begin{subarray}{c}
				\lambda,\lambda'\in\mathcal{E}_{n}\\
				\lambda\ne\lambda'
		\end{subarray}}c_{\lambda}\overline{c_{\lambda'}}\int_{B_{x}\left(r\right)}e\left(\left\langle y,\lambda-\lambda'\right\rangle \right)\,\text{d}y.\nonumber
	\end{align}
	Transforming the variables $y=rz+x$, we obtain
	\begin{equation}
		\label{eq:var_transformation}
		\int_{B_{x}\left(r\right)}e\left(\left\langle y,\lambda-\lambda'\right\rangle \right)\,\text{d}y=r^{d}e\left(\left\langle x,\lambda-\lambda'\right\rangle \right)\int_{B_{0}\left(1\right)}e\left(\left\langle z,r\left(\lambda-\lambda'\right)\right\rangle \right)\,\text{d}z.
	\end{equation}
	Note that
	\begin{align}
		\label{eq:Fourier_ball}
		\int_{B_{0}\left(1\right)}e\left(\left\langle z,r\left(\lambda-\lambda'\right)\right\rangle \right)\,\text{d}z & =\frac{\left(2\pi\right)^{d/2}J_{d/2}\left(2 \pi r\left\Vert \lambda-\lambda'\right\Vert \right)}{\left(2 \pi r\left\Vert \lambda-\lambda'\right\Vert \right)^{d/2}},
	\end{align}
	and (\ref{eq:IntegrandVar}) follows upon substituting \eqref{eq:Fourier_ball} into \eqref{eq:var_transformation} and finally into \eqref{eq:inner_int_expansion}.
\end{proof}

\begin{proof}[Proof of Lemma \ref{lem:CosToDist}]
	Let $\theta_{\lambda}$ be the angle between $\lambda$ and $\lambda'$.
	Then
	\begin{align*}
	\frac{1}{N} \cdot \#\left\{ \lambda\in\mathcal{E}_{n}:\,\lambda\succeq\lambda',\left\Vert \widehat{\lambda}-\widehat{\lambda'}\right\Vert \le s\right\}
	& =\frac{1}{N}\cdot \#\left\{ \lambda\in\mathcal{E}_{n}:\,\theta_{\lambda}\ge0,\,\sqrt{2\left(1-\cos\theta_{\lambda}\right)}\le s\right\} \\
	& =\frac{1}{N}\cdot \#\left\{ \lambda\in\mathcal{E}_{n}:\,\theta_{\lambda}\in\left[0,\arccos\left(1-s^{2}/2\right)\right]\right\} \\
	& =\frac{1}{2\pi}\arccos\left(1-s^{2}/2\right)+O\left(\Delta\left(n\right)\right)\\
	& =\frac{s}{2\pi}+O\left(s^{3}+\Delta\left(n\right)\right)
	\end{align*}
	which is the statement (\ref{eq:CosToDistEq}) of Lemma \ref{lem:CosToDist}.
\end{proof}

\end{document}